\documentclass[11pt]{article}

\usepackage[margin=1in]{geometry}
\usepackage{amsmath,amssymb,amsfonts,mathrsfs,bm}
\usepackage{mathtools}
\usepackage{wrapfig}
\usepackage{cancel}
\usepackage[plain,noend]{algorithm2e}
\usepackage{amsthm}
\usepackage{natbib}
\usepackage[colorlinks=true,linkcolor=blue,citecolor=blue,urlcolor=blue]{hyperref}

\makeatletter
\renewcommand{\algocf@captiontext}[2]{#1\algocf@typo. \AlCapFnt{}#2}

\def\@algocf@capt@plain{top}
\renewcommand{\algocf@makecaption}[2]{%
  \addtolength{\hsize}{\algomargin}%
  \sbox\@tempboxa{\algocf@captiontext{#1}{#2}}%
  \ifdim\wd\@tempboxa >\hsize%
    \hskip .5\algomargin%
    \parbox[t]{\hsize}{\algocf@captiontext{#1}{#2}}%
  \else%
    \global\@minipagefalse%
    \hbox to\hsize{\box\@tempboxa}%
  \fi%
  \addtolength{\hsize}{-\algomargin}%
}
\makeatother

\theoremstyle{plain}
\newtheorem{theorem}{Theorem}
\newtheorem{lemma}{Lemma}
\newtheorem{corollary}{Corollary}
\theoremstyle{definition}
\newtheorem{remark}{Remark}

\title{An Entropy-Energy Identity for Predictive Kullback-Leibler Regret in Infinitely Divisible Location Models}
\author{K\={o}saku Takanashi\\RIKEN, Center for Advanced Intelligence Project\\\texttt{kosaku.takanashi@riken.jp}
\and Kenichiro McAlinn\\Department of Statistics, Operations, and Data Science\\Fox School of Business, Temple University\\\texttt{kenichiro.mcalinn@temple.edu}}
\date{}

\begin{document}
\maketitle

\begin{abstract}
We consider predictive density estimation under logarithmic score for $d$-dimensional infinitely divisible location models. Taking the formal Bayes predictive density under the Lebesgue prior as a benchmark, we study the Kullback--Leibler regret of competing Bayes predictive densities. Our main contribution is an exact entropy--energy identity: the integrated regret of a Bayes predictive density $\hat{p}^{\pi}$ under prior $\pi$ relative to the benchmark admits an exact representation as the Dirichlet-form energy of the square-rooted marginal distribution $\sqrt{M^{\pi}}$ for the symmetric Markov semigroup induced by the benchmark kernel. This converts regret comparisons into a potential-theoretic problem and yields a sharp recurrence/transience characterization of when the benchmark predictive density can or cannot be uniformly improved. We introduce an $\mathcal{A}$-harmonic class of improper priors-- defined through the generator $\mathcal{A}$ of the induced process-- and give explicit tail conditions-- an integral test on the induced marginal, equivalent to power-law prior decay in heavy-tailed models-- that guarantee admissibility of the resulting Bayes predictive density. We illustrate the theory with new results for several distributions.
\end{abstract}

\noindent\textbf{Keywords:} Predictive density estimation, Logarithmic score, Kullback--Leibler regret, Dirichlet forms, Symmetric Markov semigroups, $\mathcal{A}$-harmonic priors.

\section{Introduction\label{sec:intro}}

Predictive density estimation under logarithmic score-- equivalently,
expected Kullback--Leibler (KL) risk-- is a central decision problem
connecting prediction, compression, and Bayesian risk bounds \citep{clarke2002information,XieBarron2000}.
Recent work develops sharp minimax and dominance theory for KL predictive
risk in modern high-dimensional regimes \citep{XuZhou2011,Mukherjee-Johnstone_15},
and log predictive density has become a standard target for Bayesian
model evaluation and comparison \citep{VehtariGelmanGabry2017,FongHolmes2020}.
Given data $X=x$ generated from a parametric model indexed by $\theta$,
one seeks a predictive density $\hat{p}(\cdot|x)$ for a future observation
$Y$ that performs well under KL loss. 
While seemingly stylized, this one-sample decision paradigm is relevant. It directly models predictive tasks under extreme uncertainty, such as rare macroeconomic shocks, spatial localization from isolated sensors, and cold-start algorithmic decisions, and mathematically accommodates multi-sample scenarios whenever data can be aggregated into a single sufficient statistic.
A natural way to compare predictive
rules is through \emph{KL regret}: the excess log-score risk of a
candidate predictive density relative to a benchmark. Regret is attractive
because it is scale-free, interpretable in terms of expected log-likelihood,
and aligns closely with how predictive performance is assessed in
practice. However, outside a small number of classical settings, regret
is difficult to analyze directly, and it can be challenging to obtain
structural conclusions, such as whether a benchmark admits
uniform improvement.

This paper develops a simple principle for making KL regret analyzable
in infinitely divisible (ID) distributions: {translate entropy into energy}.
Our setting is the location model with unknown location $\theta\in\mathbb{R}^{d}$ and known Gaussian
variance and Lévy measure, together with a  benchmark predictive
density; i.e., the formal Bayes predictive density based on the (improper)
Lebesgue prior $\pi_{U}(\theta)\equiv1$, denoted $\hat{p}^{\pi_{U}}(y|x)$.
The key observation is that $\hat{p}^{\pi_{U}}(\cdot|x)$ is more
than a predictive rule. In the one sample and symmetric prior setting,
it also defines a symmetric Markov kernel, and hence a symmetric Markov
semigroup with an associated Dirichlet form capturing the ``energy''
of functions under the induced dynamics \citep{Fukushima-Oshima-Takeda_10}.
This probabilistic structure provides access to potential
theory for deciding whether energies can be made arbitrarily
small.

Our first main result is an exact \emph{entropy--energy identity}
(a generalized Brown identity): for a broad class of priors $\pi$,
the integrated KL regret of the Bayes predictive density $\hat{p}^{\pi}(\cdot|x)$
relative to the benchmark $\hat{p}^{\pi_{U}}(\cdot|x)$ can be represented
\emph{exactly} as a Dirichlet-form energy of a square-root transform
of the induced marginal $M^{\pi}$. This identity turns a decision-theoretic
regret comparison into an intrinsic potential-theoretic quantity attached
to the benchmark semigroup.

The identity yields the following statistical consequence. Dirichlet-form
theory provides sharp, model-agnostic structural dichotomies-- most
notably recurrence versus transience of the associated process--
that can be checked using standard tests. Exploiting these characterizations,
we obtain a recurrence criterion for admissibility of the benchmark
predictive density: recurrence of the induced symmetric Markov process
is equivalent to the absence of a uniformly better predictive density
(admissibility), whereas transience implies the existence of a uniform
improvement (inadmissibility). Thus, questions about uniform improvement
under KL risk reduce to recurrence/transience tests for the kernel
determined by $\hat{p}^{\pi_{U}}$. This reduction is particularly
useful in heavy-tailed and non-diffusive models, where direct KL-risk
analysis is typically intractable but recurrence classifications are
often available or can be verified via Dirichlet-form bounds.

Beyond admissibility of the benchmark, we also use the same framework
to construct \emph{admissible} formal Bayes solution based on improper
priors beyond $\pi_{U}$. Because the entropy--energy identity continues
to hold when the benchmark kernel is replaced by a general symmetric
Markov kernel, admissibility of a formal Bayes predictive density
$\hat{p}^{\rho}$, under prior $\rho$, can be established by proving recurrence of the
Markov process having $\hat{p}^{\rho}$ as its transition kernel.
Motivated by the role of harmonic functions in Brown's analysis of
the Gaussian location problem \citep{Brown_71}, we introduce an $\mathcal{A}$-harmonic
class of improper priors, where $\mathcal{A}$ denotes the generator
of the benchmark semigroup. For observation models with polynomial
tails $p(x|\theta)\asymp\|x-\theta\|^{-(d+\alpha)}$, we derive an
explicit sufficient condition for admissibility of $\hat{p}^{\rho}$:
an integral test expressed in terms of the induced marginal $M^{\rho}$.
When $\rho$ is spherically symmetric, this reduces to a explicit
tail requirement, $\rho(\theta)\asymp\|\theta\|^{-\beta}$ with $d-\alpha\le\beta\le d$,
thereby identifying a concrete family of improper priors that yield
admissible predictive densities in heavy-tailed models.

\textit{Related work.} Log-score predictive density estimation and
KL regret have been studied from information-theoretic and Bayesian
perspectives, including redundancy and asymptotic minimax analyses
and their links to reference and Jeffreys-type constructions \citep{clarke2002information,clarke1994jeffreys,rissanen2002fisher,XieBarron2000,Liang-Barron_04}.
In Gaussian models, KL predictive risk admits representations in terms
of quadratic-risk or Fisher-information quantities, which underpin
classical shrinkage and dominance results for predictive densities
\citep{Stein_55,Brown_71,EdGeorge-Liang-Xu_06,Brown-EdGeorge-Xu_08}.
Predictive improvements via shrinkage priors under KL loss have been
developed in a parallel line of work \citep{Komaki_01,Komaki_06,Komaki_21},
and sharp risk expansions for predictive densities have been investigated
in related settings \citep{Mukherjee-Johnstone_15}. From the probabilistic
side, Dirichlet forms provide analytic characterizations of recurrence
and transience for a wide class of symmetric Markov processes, including
non-local Lévy-type models \citep{Fukushima-Oshima-Takeda_10}. Our
contribution builds on these strands by giving an exact entropy--energy
representation for integrated KL regret and using recurrence/transience
and $\mathcal{A}$-harmonic constructions to obtain admissibility
criteria.




\section{Main Results\label{sec:main}}

This section develops the link between KL predictive risk and the potential theory of the benchmark predictive kernel. Section~\ref{sec:bayesrisk} first expresses the Bayes risk difference between a Bayes predictive density $\hat p^\pi$ and the benchmark $\hat p^{\pi_U}$ as an integrated conditional KL divergence. Theorem~\ref{prop:BayesRisk} then identifies this quantity exactly with the Dirichlet-form energy of $\sqrt{M^\pi}$ for the symmetric semigroup induced by $\hat p^{\pi_U}$.

Theorem~\ref{thm:characterization_Rec-Transi} is the statistical consequence of this identity: admissibility of the benchmark predictive density is reduced to recurrence/transience of the induced symmetric Markov process. Sections~\ref{sec:cauchy}, \ref{subsec:Related-rusults}, and \ref{subsec:A-harmonic} then apply this criterion to representative infinitely divisible models, relate it to the Gaussian case, and derive the sufficient condition for the $\mathcal A$-harmonic priors considered in Theorem~\ref{thm:A-harmonic}.

\subsection{Bayes risk difference and Blyth's method\label{sec:bayesrisk}}

Consider  $d$-dimensional infinitely divisible location
models:
\[
X=\theta+\varepsilon,\ \varepsilon\sim\textrm{ID}\left(A,\nu,\gamma\right)
\]
where $\theta$ is the location parameter, and $\left(A,\nu,\gamma\right)$
is a known Lévy-Khintchine triplet ($A$ is the Gaussian variance,
$\nu$ is the Lévy measure and $\gamma$ is the center). Write
the likelihood function as $p_{A,\nu,\gamma}\left(x\left|\theta\right.\right)$.
No moment assumptions are imposed. Based on observing a datum, $X=x$, we consider the problem
of obtaining a predictive density, $\hat{p}\left(y\left|x\right.\right)$,
for $Y$ that is close to $p_{A,\nu,\gamma}\left(y\left|\theta\right.\right)$.
We measure this closeness by the Kullback-Leibler (KL) loss, 
\[
L_{\mathsf{KL}}\left(\theta,\hat{p}\left(\cdot\left|x\right.\right)\right)=\mathsf{KL}\left(p_{A,\nu,\gamma}\left(\cdot\mid\theta\right)\middle\|\hat{p}\left(\cdot\mid x\right)\right)=\int_{\mathbb{R}^{d}}\log\frac{p_{A,\nu,\gamma}\left(y\left|\theta\right.\right)}{\hat{p}\left(y\left|x\right.\right)}p_{A,\nu,\gamma}\left(y\left|\theta\right.\right)dy,
\]
and evaluate $\hat{p}$ by its expected loss or risk function, $R_{\mathsf{KL}}\left(\theta,\hat{p}\right)=\int_{\mathbb{R}^{d}}L_{\mathsf{KL}}\left(\theta,\hat{p}\left(\cdot\left|x\right.\right)\right)p_{A,\nu,\gamma}\left(x\left|\theta\right.\right)dx$.
For the comparison of two procedures, we say that $\hat{p}^{1}$ dominates
$\hat{p}^{2}$ if $R_{\mathsf{KL}}\left(\theta,\hat{p}^{1}\right)\leq R_{\mathsf{KL}}\left(\theta,\hat{p}^{2}\right)$
for all $\theta$ and with strict inequality for some $\theta$. The
Bayes risk, $B_{\mathsf{KL}}\left(\pi,\hat{p}\right)=\int_{\mathbb{R}^{d}}R_{\mathsf{KL}}\left(\theta,\hat{p}\right)\pi\left(\theta\right)d\theta$,
is minimized by the predictive distribution $\hat{p}^{\pi}\left(y\left|x\right.\right)=\int_{\mathbb{R}^{d}}p_{A,\nu,\gamma}\left(y\left|\theta\right.\right)\pi\left(\theta\left|x\right.\right)d\theta$.
The predictive distribution dominates any plug-in estimate \citep{Aitchison_75}.
The best equivariant predictive density, with respect to the location
group transformation, is the Bayes predictive density under a uniform
prior, $\pi_{U}\left(\theta\right)\equiv1$, which has constant risk
\citep[see][]{Murray_77,Ng_80}. More precisely, one might refer to
$\hat{p}^{\pi_{U}}$ as a formal Bayes solution because $\pi_{U}$
is improper. \cite{Aitchison_75} showed that $\hat{p}^{\pi_{U}}\left(y\left|x\right.\right)$
dominates the plug-in predictive density $p_{A,\nu,\gamma}\left(y\left|\hat{\theta}_{\textrm{MLE}}\right.\right)$,
which simply substitutes the MLE, $\hat{\theta}_{\textrm{MLE}}=x$,
for $\theta$.

To examine the admissibility/inadmissibility of $\hat{p}^{\pi_{U}}$
under KL risk, Blyth's method is an effective strategy. The necessary
and sufficient condition for $\hat{p}$ to be admissible \citep{Stein_56, Farrell_66}
is for there to exist finite Borel measures, $\left\{ \pi_{i}\right\} ,i=1,2,\cdots$,
that satisfies 
\begin{align*}
\mathscr{A}:  \pi_{i}\left(B\right)\geq1; \quad  \mathscr{B}:  B_{\mathsf{KL}}\left(\pi_{i},\hat{p}\right)-B_{\mathsf{KL}}\left(\pi_{i},\hat{p}^{\pi_{i}}\right)\rightarrow0,\ \left\{ \pi_{i}\right\} \textrm{ has compact support}
\end{align*}
where $B=\left\{ \left.\theta\in\mathbb{R}^{d}\right|\left\Vert \theta\right\Vert \leq1\right\} $
is unit ball. Note that $\hat{p}^{\pi_{i}}$ denotes the Bayes estimator,
$\hat{p}^{\pi_{i}}=\int_{\mathbb{R}^{d}}p_{A,\nu,\gamma}\left(y\left|\theta\right.\right)\pi_{i}\left(\theta\left|x\right.\right)d\theta$,
under prior, $\pi_{i}$.

A straightforward calculation yields the following representation of the Bayes risk difference:

\begin{lemma}\label{BRD-invariant}

We have 
\begin{equation}
B_{\mathsf{KL}}\left(\pi,\hat{p}^{\pi_{U}}\right)-B_{\mathsf{KL}}\left(\pi,\hat{p}^{\pi}\right)=\int_{\mathbb{R}^{d}}\mathsf{KL}\left(\hat{p}^{\pi}\left(\cdot\mid x\right)\middle\|\hat{p}^{\pi_{U}}\left(\cdot\mid x\right)\right)M^{\pi}\left(x;A,\nu,\gamma\right)dx.\label{eq:BayesRiskDiffe}
\end{equation}
Here, $M^{\pi}\left(x;A,\nu,\gamma\right)$ is the marginal distribution
under $\pi$: $M^{\pi}\left(x;A,\nu,\gamma\right)=\int_{\mathbb{R}^{d}}p_{A,\nu,\gamma}\left(x\left|\theta\right.\right)\pi\left(\theta\right)d\theta$,
and $M^{\pi}\left(x;A,\nu,\gamma\right)$ is the invariant distribution
of $\hat{p}^{\pi}\left(y\left|x\right.\right)$: 
\begin{equation}
\int_{\mathbb{R}^{d}}\hat{p}^{\pi}\left(y\left|x\right.\right)M^{\pi}\left(y;A,\nu,\gamma\right)dy=M^{\pi}\left(x;A,\nu,\gamma\right).\label{eq:Marginal}
\end{equation}

\end{lemma}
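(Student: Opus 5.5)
The plan is to expand both Bayes risks using Fubini and the Bayes identity $p(x\mid\theta)\pi(\theta)=\pi(\theta\mid x)M^{\pi}(x)$, where $\pi(\theta\mid x)=p_{A,\nu,\gamma}(x\mid\theta)\pi(\theta)/M^{\pi}(x)$. Write $p=p_{A,\nu,\gamma}$ and $M^\pi=M^\pi(\cdot;A,\nu,\gamma)$. For any predictive density $\hat p$,
\[
B_{\mathsf{KL}}(\pi,\hat p)=\iiint p(y\mid\theta)\log\frac{p(y\mid\theta)}{\hat p(y\mid x)}\,p(x\mid\theta)\pi(\theta)\,dy\,dx\,d\theta .
\]
Taking the difference for $\hat p^{\pi_U}$ and $\hat p^{\pi}$, the entropy term $\int p\log p$ cancels. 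The remainder is
\[
\iint\Bigl[\int p(y\mid\theta)\pi(\theta\mid x)\,d\theta\Bigr]\log\frac{\hat p^{\pi}(y\mid x)}{\hat p^{\pi_U}(y\mid x)}\,dy\,M^{\pi}(x)\,dx .
\]
The bracket is exactly $\hat p^{\pi}(y\mid x)$, so the inner $y$-integral is $\mathsf{KL}(\hat p^{\pi}(\cdot\mid x)\,\|\,\hat p^{\pi_U}(\cdot\mid x))$. This gives \eqref{eq:BayesRiskDiffe}.

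The step needing care is justifying the cancellation and the exchange of integrals, since no moment assumptions are imposed. In Blyth's method $\pi$ is a finite, compactly supported measure, and I would work in that setting. There I would first check that $B_{\mathsf{KL}}(\pi,\hat p^{\pi_U})<\infty$: the risk of $\hat p^{\pi_U}$ is constant in $\theta$ by equivariance, and it is at most the risk of the plug-in density at the MLE by \citep{Aitchison_75}. Granted this, both Bayes risks are finite, because $B_{\mathsf{KL}}(\pi,\hat p^{\pi})\le B_{\mathsf{KL}}(\pi,\hat p^{\pi_U})$. Their difference can then be formed as a single integral of $\log(\hat p^{\pi}/\hat p^{\pi_U})$ against the joint law $p(y\mid\theta)p(x\mid\theta)\pi(\theta)$, without ever separating the term $\int p\log p$. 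Tonelli applies to the positive and negative parts of the log-ratio separately, and the positive part is controlled by the KL form, which is nonnegative. This integrability bookkeeping is the main obstacle; the algebra itself is routine. If the finiteness check fails, I would state the identity under the standing assumption that $B_{\mathsf{KL}}(\pi,\hat p^{\pi_U})<\infty$.

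For \eqref{eq:Marginal}, I would use the symmetry of the kernel. Write
\[
\hat p^{\pi}(y\mid x)M^{\pi}(x)=\int p(y\mid\theta)p(x\mid\theta)\pi(\theta)\,d\theta ,
\]
which is symmetric in $(x,y)$. Hence $\hat p^{\pi}(y\mid x)M^{\pi}(x)=\hat p^{\pi}(x\mid y)M^{\pi}(y)$, which is detailed balance. Integrating over $y$ and using $\int p(y\mid\theta)\,dy=1$ gives
\[
\int\hat p^{\pi}(x\mid y)M^{\pi}(y)\,dy=M^{\pi}(x),
\]
and by the symmetry just shown this is the stated identity. All integrands are nonnegative, so Tonelli suffices for this part.
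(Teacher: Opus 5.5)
Your computation of \eqref{eq:BayesRiskDiffe} is correct, and it is exactly the ``straightforward calculation'' the paper invokes without writing out. The gap is in the final sentence of your argument for \eqref{eq:Marginal}. What you actually derive, $\int\hat p^{\pi}(x\mid y)M^{\pi}(y)\,dy=M^{\pi}(x)$, is the correct stationarity relation. Detailed balance does not convert it into the printed display $\int\hat p^{\pi}(y\mid x)M^{\pi}(y)\,dy=M^{\pi}(x)$. Detailed balance swaps the kernel's arguments and the argument of $M^{\pi}$ together, so it turns your integrand $\hat p^{\pi}(x\mid y)M^{\pi}(y)$ into $\hat p^{\pi}(y\mid x)M^{\pi}(x)$, not into $\hat p^{\pi}(y\mid x)M^{\pi}(y)$. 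The kernel $\hat p^{\pi}$ is not itself symmetric unless $M^{\pi}$ is constant. In fact the display as printed is false. Take $d=1$, $p(x\mid\theta)=N(\theta,1)$, $\pi=N(0,\tau^{2})$ and $b=\tau^{2}/(1+\tau^{2})\in(0,1)$. Then $\hat p^{\pi}(\cdot\mid x)=N(bx,1+b)$, so $\int\hat p^{\pi}(y\mid x)\,dx=1/b$, and integrating the printed identity over $x$ would give $1/b=1$. The display must therefore be read as $\int\hat p^{\pi}(y\mid x)M^{\pi}(x)\,dx=M^{\pi}(y)$. This is the form the paper actually uses later, in the proof of Lemma~\ref{lem:4.3}, and it is exactly what your computation proves. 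You should say this explicitly rather than assert equivalence with the printed form.

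A secondary point concerns integrability. Bounding $B_{\mathsf{KL}}(\pi,\hat p^{\pi_U})$ through Aitchison's dominance of the plug-in can yield nothing, because the plug-in risk may be $+\infty$. In the exponential location model listed in Corollary~\ref{cor:FiniteVariance}, $f(\cdot-x)$ vanishes on $(\theta,x)$, an interval of positive probability under $p(\cdot\mid\theta)$. Yet $\hat p^{\pi_U}$ there is a Laplace density with finite (constant) risk. You can dispense with any finiteness hypothesis on $\hat p^{\pi_U}$. Apply the chain rule for Kullback--Leibler divergence to the law $\pi(\theta\mid x)p(y\mid\theta)$ of $(\theta,Y)$ given $X=x$, compared with $\pi(\cdot\mid x)\otimes q$. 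For every density $q$ this gives $\int\pi(\theta\mid x)\,\mathsf{KL}(p(\cdot\mid\theta)\|q)\,d\theta=\mathsf{KL}(\hat p^{\pi}(\cdot\mid x)\|q)+J_{x}$, where $J_{x}\in[0,\infty]$ does not depend on $q$. Integrating against $M^{\pi}$ gives $B_{\mathsf{KL}}(\pi,\hat p^{\pi_U})=B_{\mathsf{KL}}(\pi,\hat p^{\pi})+\int\mathsf{KL}(\hat p^{\pi}(\cdot\mid x)\|\hat p^{\pi_U}(\cdot\mid x))M^{\pi}(x)\,dx$ in $[0,\infty]$. Hence \eqref{eq:BayesRiskDiffe} holds whenever $B_{\mathsf{KL}}(\pi,\hat p^{\pi})=\int J_{x}M^{\pi}(x)\,dx$ is finite.
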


\subsection{Equivalence between Bayes risk difference and Dirichlet form \label{sec:2.2}}

Our main contribution is to show that the Bayes risk difference eq.~\eqref{eq:BayesRiskDiffe}
in the above setup satisfies the following equality, which we call
generalized Brown's identity:

\begin{theorem} \label{prop:BayesRisk}

Let $M^{\pi}\left(x;A,\nu,\gamma\right)$ be the invariant distribution
of $\hat{p}^{\pi}$. The Bayes risk difference eq.~\eqref{eq:BayesRiskDiffe}
satisfies 
\begin{equation}
\int_{\mathbb{R}^{d}}\mathsf{KL}\left(\hat{p}^{\pi}\left(\cdot\mid x\right)\middle\|\hat{p}^{\pi_{U}}\left(\cdot\mid x\right)\right)M^{\pi}\left(x;A,\nu,\gamma\right)dx=\mathcal{E}\left(\sqrt{M^{\pi}\left(\cdot;A,\nu,\gamma\right)},\sqrt{M^{\pi}\left(\cdot;A,\nu,\gamma\right)}\right).\label{eq:BRD-Dirichlet}
\end{equation}
Here, $\mathcal{E}\left(f,f\right)$ and $f\in\mathcal{F}$ are the
values of the Dirichlet form of a Markov process, $\left\{ X_{t}\right\} _{t\geq0}$,
with $\hat{p}^{\pi_{U}}=\hat{p}^{\pi_{U}}\left(y\left|x\right.\right)$
as its transition probability, with initial value, $X_{0}=x$, that
takes value, $X_{1}=y$, at time, $t=1$, and the domain of its Dirichlet
form. Details of the Dirichlet form are defined below. See Appendix Section~\ref{subsec:Markov-Bayes}
regarding interpreting $\hat{p}^{\pi_{U}}$ as transition probability.

\end{theorem}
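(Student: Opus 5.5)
The plan is to reduce both sides of \eqref{eq:BRD-Dirichlet} to integrals against one symmetric kernel, and then match them. Write $p=p_{A,\nu,\gamma}$, $M=M^{\pi}(\cdot;A,\nu,\gamma)$, and
\[
K(x,y):=\hat p^{\pi_U}(y\mid x)=\int p(y\mid\theta)p(x\mid\theta)\,d\theta ,\qquad J^{\pi}(x,y):=\int p(y\mid\theta)p(x\mid\theta)\pi(\theta)\,d\theta .
\]
Both kernels are symmetric in $(x,y)$. Since $K$ is the density of $x+\varepsilon_1-\varepsilon_2$, it is a symmetric Markov kernel whose invariant (reversible) measure is Lebesgue measure; this is the content of Appendix Section~\ref{subsec:Markov-Bayes}. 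Moreover $\hat p^{\pi}(y\mid x)=J^{\pi}(x,y)/M(x)$, and $\int J^{\pi}(x,y)\,dy=M(x)$, which is exactly \eqref{eq:Marginal} of Lemma~\ref{BRD-invariant}.

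\textbf{Step 1 (rewrite the left side).} Substituting these expressions, the left-hand side of \eqref{eq:BRD-Dirichlet} becomes
\[
\iint J^{\pi}(x,y)\log\frac{J^{\pi}(x,y)}{M(x)K(x,y)}\,dy\,dx .
\]
I would symmetrize in $(x,y)$, using the symmetry of $J^{\pi}$ and $K$, to replace $\log M(x)$ by $\tfrac12\log M(x)+\tfrac12\log M(y)$. This gives
\[
\iint J^{\pi}(x,y)\log\frac{J^{\pi}(x,y)}{K(x,y)\sqrt{M(x)}\sqrt{M(y)}}\,dx\,dy ,
\]
in which the square root of the marginal appears naturally.

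\textbf{Step 2 (rewrite the right side).} Next I would write the Dirichlet form of the semigroup $\{P_t\}$ with $P_1=K$ in Beurling--Deny form. For the time-one chain this is
\[
\mathcal E(f,f)=\tfrac12\iint \bigl(f(x)-f(y)\bigr)^2K(x,y)\,dx\,dy=\langle f,(I-P_1)f\rangle_{L^2(dx)} ;
\]
for the continuous-time form one would pass to the Lévy jump kernel of the induced process. With $f=\sqrt M$ this equals
\[
\int M\,dx-\iint\sqrt{M(x)}\,K(x,y)\,\sqrt{M(y)}\,dx\,dy .
\]
I would then show that the expression from Step 1 coincides with this energy, in whatever precise form ``defined below'' the paper fixes for $\mathcal E$. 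The key input is the Bayesian structure $J^{\pi}=\int p(x\mid\theta)p(y\mid\theta)\pi(\theta)\,d\theta$: I would write the integrand as $\sqrt{M(x)M(y)}\,K\,\phi(h)$ with $h=J^{\pi}/(K\sqrt{M(x)M(y)})$, and use the normalizations $\int J^{\pi}\,dy=M(x)$ and $\int K\,dy=1$ to cancel the non-quadratic parts. The goal is to leave exactly the quadratic energy $\tfrac12\iint(\sqrt{M(x)}-\sqrt{M(y)})^2 K$, or its generator version $-\langle\sqrt M,\mathcal A\sqrt M\rangle$.

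\textbf{Step 3 (domain and exchanges).} Finally I would verify that $\sqrt M\in\mathcal F$ and justify the Fubini and limiting exchanges. This is routine for $\pi$ of compact support, which is the case needed for Blyth's method, because $M$ is then integrable, and it extends to general $\pi$ by monotone truncation.

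\textbf{Main obstacle.} I expect the hard part to be Step 2: matching the entropy-type expression to a genuinely quadratic Dirichlet energy. This hinges on the exact definition of $\mathcal E$ that the paper adopts, presumably one adapted to the benchmark kernel through the generator $\mathcal A$. My first attempt would be to define $\mathcal E$ through the $h$-transform of $P_1$ by $\sqrt M$ and check that the symmetrized KL integrand is exactly its carré-du-champ.
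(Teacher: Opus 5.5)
Your Step~1 is correct, but Step~2 cannot be carried out, and you can see why in your own notation. Write $h=J^{\pi}/(K\sqrt{M(x)M(y)})$. The two normalizations $\iint J^{\pi}=\int M=1$ and $\iint K\sqrt{M(x)M(y)}=\langle\sqrt M,P_1\sqrt M\rangle$ can only remove the affine part of $h\log h$. What they give is the exact decomposition
\begin{align*}
\int\mathsf{KL}\bigl(\hat p^{\pi}(\cdot\mid x)\,\big\|\,\hat p^{\pi_U}(\cdot\mid x)\bigr)M(x)\,dx
&=\bigl\langle\sqrt M,(I-P_1)\sqrt M\bigr\rangle\\
&\quad+\iint K(x,y)\sqrt{M(x)M(y)}\,\bigl(h\log h-h+1\bigr)\,dx\,dy .
\end{align*}
The remainder is nonnegative, and it cannot vanish. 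If it did, then $J^{\pi}=K\sqrt{M(x)M(y)}$, hence $\langle\sqrt M,(I-P_1)\sqrt M\rangle=1-\iint J^{\pi}=0$. That is impossible for $0\neq\sqrt M\in L^2$, because $\widehat K(\xi)=|\mathbb E\,e^{i\xi\cdot\varepsilon}|^2<1$ for $\xi\neq0$ (for $\varepsilon$ with a density). So your method yields a strict inequality, not an identity.

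There is also a mismatch of forms that the plan glosses over. The quantity $\langle f,(I-P_1)f\rangle=\int(1-e^{-\lambda})\,d\langle E_\lambda f,f\rangle$ is the form of the time-one chain. The $\mathcal E$ in the statement is the generator form $\lim_{t\downarrow0}t^{-1}\langle f-T_tf,f\rangle=\int\lambda\,d\langle E_\lambda f,f\rangle$ of the semigroup with $T_1=\hat p^{\pi_U}$. Your entropy computation produces $K$, not the L\'evy kernel $\tilde\nu$, and nothing in the plan converts one into the other. Redefining $\mathcal E$ through a Doob transform by $\sqrt M$ would make the form depend on $\pi$. That changes the statement: Theorem~\ref{thm:characterization_Rec-Transi} needs one fixed form along the whole sequence $\pi_n$.

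In fact no rearrangement can give the equality with either form, because it already fails in the simplest case. Take $d=1$, $X,Y\mid\theta\sim N(\theta,1)$, $\pi=N(0,\tau^2)$, and $s=\tau^2/(1+\tau^2)\in(0,1)$. Then $M=N(0,1+\tau^2)$, $\hat p^{\pi}(\cdot\mid x)=N(sx,1+s)$ and $\hat p^{\pi_U}(\cdot\mid x)=N(x,2)$. A direct computation gives $\tfrac12\log\frac{2}{1+s}$ for the left side of \eqref{eq:BRD-Dirichlet}. The semigroup with $T_1=\hat p^{\pi_U}$ has generator $\Delta$, so $\mathcal E(\sqrt M,\sqrt M)=\int|(\sqrt M)'|^2=\frac{1-s}{4}$. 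Meanwhile $\langle\sqrt M,(I-P_1)\sqrt M\rangle=1-\sqrt{2/(3-s)}$. For every $s\in(0,1)$,
\[
1-\sqrt{2/(3-s)}<\tfrac{1-s}{4}<\tfrac12\log\tfrac{2}{1+s},
\]
and the three agree only to first order as $s\to1$.

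The correct Gaussian statement is \eqref{eq:BGX-08_2}, which here reads $2\int_{1/2}^{1}\mathcal E(\sqrt{M_w},\sqrt{M_w})\,dw$ with $M_w=M^{\pi}(\cdot;wI,0)$. This is an average of energies of different marginals. It is strictly larger than $\mathcal E(\sqrt{M_1},\sqrt{M_1})$ because $w\mapsto\mathcal E(\sqrt{M_w},\sqrt{M_w})$ is decreasing. (The passage to Lemma~\ref{lem:BGX-Cor1} implicitly treats this integrand as constant in $w$.)

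The paper's argument is entirely different from yours; it never compares integrands. It writes the averaged KL through the Donsker--Varadhan variational formula (Lemma~\ref{lem:rate-KL}). It then identifies the time derivative at $0$ with the rate function $I(M^{\pi})$ via Jain--Krylov (Theorem~\ref{thm:diff_KL_rate}) and integrates over times in $[0,1]$. Finally it invokes $I(f\,\mathsf m)=\mathcal E(\sqrt f,\sqrt f)$ for symmetric semigroups (Theorem~\ref{prop:rate variational}). So the square root and the generator form enter through the Donsker--Varadhan identification, not through symmetrization.

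However, that route has two unjustified steps. The integration step presupposes that the derivative is the same at every time. Lemma~\ref{lem:4.3} asserts an equality where its own chain-rule computation only gives $\mathsf{KL}(M^{\pi}\|p_M)\le$ the averaged conditional KL. The example shows the resulting identity is false, so the paper's route does not supply your missing step either. What your approach can genuinely deliver is the strict lower bound displayed above. An exact identity would have to be a time-integrated, de~Bruijn-type statement rather than a single energy of $\sqrt{M^{\pi}}$.
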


Here, $\mathcal{E}\left(\cdot,\cdot\right)$ is the Dirichlet form
of a Markov process, $\left\{ X_{t}\right\} _{t\geq0}$, with transition
probability, $\hat{p}^{\pi_{U}}\left(y\left|x\right.\right)$. The
value of $\mathcal{E}\left(\sqrt{M^{\pi}\left(\cdot;A,\nu,\gamma\right)},\sqrt{M^{\pi}\left(\cdot;A,\nu,\gamma\right)}\right)$
is equal to the quadratic variation of a Markov process, $\left\{ X_{t}\right\} $,
under the transformation of variable, $\sqrt{M^{\pi}\left(\cdot;A,\nu,\gamma\right)}$
(details regarding the connection between the predictive distribution
and the continuous-time Markov process are given in Appendix Section~\ref{subsec:Markov-Bayes}).
Intuitively, this is equal to the instantaneous variance: 
\[
\lim_{\varDelta t\rightarrow0}\frac{1}{\varDelta t}\mathbb{E}\left[\left(\sqrt{M^{\pi}\left(X_{t+\varDelta t};A,\nu,\gamma\right)}-\sqrt{M^{\pi}\left(X_{t};A,\nu,\gamma\right)}\right)^{2}\right],
\]
where $\mathbb{E}$ represents expectation of the Markov process,
$\left\{ X_{t}\right\} _{t\geq0}$, with respect to the transition
probability $\hat{p}^{\pi_{U}}$. $\mathcal{F}$ is the domain of
the infinitesimal generator, $-\mathcal{A}$, of the Markov process,
$\left\{ X_{t}\right\} $. 

Eq.~\eqref{eq:BRD-Dirichlet} is a generalization of eq.~(1.3.4)
in \cite{Brown_71} by extending the quadratic loss to KL loss, and
normal distribution to ID location models.

From the equality in eq.~\eqref{eq:BRD-Dirichlet}, we can connect
the statistical decision problem and the Markov process. Whether the
Markov process is recurrent or transient can be discerned by the Dirichlet
form, and is dependent on the existence of a sequence of functions
where the Dirichlet form becomes zero \citep{Fukushima-Oshima-Takeda_10}.
From this, we can correspond to Blyth's method and the discernment
of recurrence and transience of a Markov process through its Dirichlet
form. Hence, the admissibility or inadmissibility of $\hat{p}^{\pi_{U}}$
under KL risk has a corresponding relationship with the recurrence
or transience of a Markov process with transition probability $\hat{p}^{\pi_{U}}$.
Thus, we have,

\begin{theorem}\label{thm:characterization_Rec-Transi} Let $\mathsf{m}$
be a Lebesgue measure on $\mathbb{R}^{d}$, $\left\{ X_{t}\right\} $
be a Markov process with $\hat{p}^{\pi_{U}}\left(y\left|x\right.\right)$
as its transition probability density and $\left(\mathcal{E},\mathcal{F}\right)$
be the corresponding Dirichlet form on $L^{2}\left(\mathbb{R}^{d};\mathsf{m}\right)$. 
\begin{itemize}
\item A necessary and sufficient condition for the recurrence of a Markov
process with transition probability $\hat{p}^{\pi_{U}}$ is the
existence of a function sequence, $\left\{ f_{n}\right\} $, that
satisfies 
\[
\left\{ f_{n}\right\} \subset\mathcal{F},\quad\lim_{n\rightarrow\infty}f_{n}=1\left(\mathsf{m}\textrm{-a.e.}\right),\quad\lim_{n\rightarrow\infty}\mathcal{E}\left(f_{n},f_{n}\right)=0.
\]
Therefore, if $\left\{ X_{t}\right\} $ is recurrent then $\hat{p}^{\pi_{U}}\left(y\left|x\right.\right)$
is admissible. 
\item A necessary and sufficient condition for the transience of a Markov
process with transition probability $\hat{p}^{\pi_{U}}$ is that
there exists an $\mathsf{m}$-integrable function $g$ that is bounded
on $\mathbb{R}^{d}$ with $g>0,\ \mathsf{m}\textrm{-a.e.}$ and satisfies
\begin{alignat*}{2}
0<\int_{\mathbb{R}^{d}}\left|f\right|gd\mathsf{m}\leq & \mathcal{E}\left(\sqrt{f},\sqrt{f}\right), & \ \ \textrm{ for all }\sqrt{f}\in\mathcal{F}.
\end{alignat*}
Therefore, if $\left\{ X_{t}\right\} $ is transient then $\hat{p}^{\pi_{U}}\left(y\left|x\right.\right)$
is inadmissible. 
\item When $\hat{p}^{\pi_{U}}\left(y\left|x\right.\right)$ is the transition
probability, the Markov process $\left\{ X_{t}\right\} $ is irreducible,
thus either of the above two will
occur. 
\end{itemize}
\end{theorem}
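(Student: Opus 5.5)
The plan is to combine the generalized Brown identity (Theorem~\ref{prop:BayesRisk}) with Blyth's criterion $\mathscr{A}$, $\mathscr{B}$ and the standard Dirichlet-form characterizations of recurrence and transience in \cite{Fukushima-Oshima-Takeda_10}.

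The first preliminary step is to check that $\hat p^{\pi_U}(y\mid x)$ is a symmetric Markov kernel with respect to $\mathsf m$. Since $\hat p^{\pi_U}(y\mid x)=\int p(y\mid\theta)p(x\mid\theta)\,d\theta$ depends only on $y-x$ and is symmetric in $(x,y)$, this holds. It is also infinitely divisible: it is the law of $\varepsilon-\varepsilon'$, with characteristic exponent $2\,\mathrm{Re}\,\psi$. Hence it embeds as the time-one marginal of a symmetric L\'evy semigroup $P_t$. This semigroup has Dirichlet form
\[
\mathcal E(f,f)=\int |\hat f(\xi)|^2\,2\mathrm{Re}\,\psi(\xi)\,d\xi ,
\]
and this is the form appearing in Theorem~\ref{prop:BayesRisk}. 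The recurrence criterion (existence of $f_n\in\mathcal F$ with $f_n\to1$ a.e. and $\mathcal E(f_n,f_n)\to0$) and the transience criterion (existence of a bounded integrable $g>0$ with $\int|f|g\,d\mathsf m\le\mathcal E(\sqrt f,\sqrt f)$) are then exactly Theorems~1.6.3 and 1.6.2 of \cite{Fukushima-Oshima-Takeda_10}. These are quoted rather than reproved.

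For the recurrence half, I take the sequence $f_n$ and, after truncation and mollification, assume it is compactly supported with $0\le f_n\le1$. Normal contractions decrease $\mathcal E$, and convolution with a mollifier does not increase the L\'evy form. I then define priors $\pi_n$ through their marginals, requiring $\sqrt{M^{\pi_n}}\approx f_n$. The natural choice is $\pi_n=f_n^2$ itself as a prior density. In that case $M^{\pi_n}=p*f_n^2$, and
\[
\mathcal E\Bigl(\sqrt{p*f_n^2},\sqrt{p*f_n^2}\Bigr)\le\mathcal E(f_n,f_n).
\]
This inequality follows from convexity of $(a,b)\mapsto(\sqrt a-\sqrt b)^2$ together with translation invariance of the jump/diffusion representation of $\mathcal E$; it is a Jensen-type inequality applied after writing $\mathcal E$ in its Beurling--Deny form. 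Theorem~\ref{prop:BayesRisk} then gives condition $\mathscr B$. Condition $\mathscr A$ holds because $f_n\to1$ on $B$, so $\pi_n(B)\ge 1$ after rescaling by a bounded factor. Compact support holds by construction. Blyth's method then yields admissibility.

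For the transience half, I argue by contradiction. Suppose $\hat p^{\pi_U}$ were admissible, and take the Blyth sequence $\pi_i$ with $\mathcal E(\sqrt{M^{\pi_i}},\sqrt{M^{\pi_i}})\to0$. Applying the transience inequality with $f=M^{\pi_i}$ gives $\int M^{\pi_i}g\,d\mathsf m\to0$. But $\pi_i(B)\ge1$ and $g>0$ a.e., so
\[
\int M^{\pi_i}g\ge\int_B\pi_i(\theta)\int p(x\mid\theta)g(x)\,dx\,d\theta\ge c>0,
\]
where $c$ is the infimum over $\theta\in B$ of $\int p(x\mid\theta)g(x)\,dx$. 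This infimum is positive by continuity in $\theta$ and $g>0$ a.e. The contradiction shows $\hat p^{\pi_U}$ is inadmissible, using the Stein--Farrell necessity direction. Finally, irreducibility holds because the L\'evy semigroup has a density that is positive on a neighborhood of $0$ and is symmetric, so its support generates $\mathbb R^d$. By Lemma~1.6.4 of \cite{Fukushima-Oshima-Takeda_10}, an irreducible form is either recurrent or transient.

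The main obstacle is the contraction inequality $\mathcal E(\sqrt{p*h},\sqrt{p*h})\le\mathcal E(\sqrt h,\sqrt h)$ needed in the recurrence half. I would first attempt it via the Beurling--Deny decomposition. For the jump part, convexity of $(\sqrt a-\sqrt b)^2$ gives it directly. For the Gaussian part, the Fisher-information convexity $|\nabla\sqrt{p*h}|^2\le p*|\nabla\sqrt h|^2$ gives it. A secondary technical point is verifying $\sqrt{M^{\pi_n}}\in\mathcal F$, which follows from the same bound together with $M^{\pi_n}\in L^1$.
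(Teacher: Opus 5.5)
Your argument is correct within the paper's framework, taking Theorem~\ref{prop:BayesRisk} and the Stein--Farrell form of Blyth's criterion as given. It has the same skeleton as the paper's, but both load-bearing steps are done differently.

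\textbf{Recurrence $\Rightarrow$ admissibility.} The paper builds the explicit sequence $\sqrt{\pi_n}=G^{\eta}_{1/n}\eta$ from the resolvent of the $\eta$-killed process (Theorem~\ref{prop:Diri_0}). This is a Stein-prior/Green-function construction, essentially the argument behind one direction of Theorem~1.6.3 of \cite{Fukushima-Oshima-Takeda_10}. The paper then passes from $\sqrt{\pi}$ to $\sqrt{M^{\pi}}$ by the spectral argument of Lemma~\ref{lem:Diri<prior}. You instead take $f_n$ from Theorem~1.6.3 as a black box, set $\pi_n=f_n^2$, and prove $\mathcal{E}(\sqrt{f\ast\pi},\sqrt{f\ast\pi})\le\mathcal{E}(\sqrt{\pi},\sqrt{\pi})$. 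Your proof uses joint convexity of $(a,b)\mapsto(\sqrt a-\sqrt b)^2$ and of $(a,v)\mapsto|v|^2/a$, plus translation invariance of the L\'evy form. You lose the explicit prior, but your comparison lemma is the sounder one. It applies directly to $M^{\pi}=f\ast\pi$ with the raw, possibly asymmetric, noise density. The paper's computation, by contrast, implicitly treats $\sqrt{T_t\pi}$ as $T_{t/2}\sqrt{\pi}$. It also identifies $T_1\pi$, which is convolution with the symmetrized kernel $\hat p^{\pi_U}$, with $M^{\pi}$.

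\textbf{Transience $\Rightarrow$ inadmissibility.} The paper's reference function $g=\sqrt{M^{\pi}}/\max\{\int_0^\infty T_s\sqrt{M^{\pi}}\,ds,1\}$ depends on $\pi$. It therefore shows each energy is positive but gives nothing uniform along a Blyth sequence. Your fixed $g$ together with $\pi_i(B)\ge1$ gives the uniform constant $c=\inf_{\theta\in B}\int f(x-\theta)g(x)\,dx>0$. That is exactly what the contrapositive of the necessity direction requires. Both routes rely equally on that necessity direction for KL loss.

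\textbf{Two small repairs.} First, truncation and mollification do not make $f_n$ compactly supported. Instead, use that $C_c^\infty(\mathbb{R}^d)$ is a core. Pick $\phi_n\in C_c^\infty(\mathbb{R}^d)$ with $\mathcal{E}_1(\phi_n-f_n,\phi_n-f_n)\le 4^{-n}$, so that $\phi_n-f_n\to0$ a.e.\ and $\mathcal{E}(\phi_n,\phi_n)\to0$, and then apply $(0\vee\phi_n)\wedge1$. In any case, compact support is not needed for the sufficiency half of Blyth's method. Second, Theorem~1.6.2 of \cite{Fukushima-Oshima-Takeda_10} actually reads $\int|u|g\,d\mathsf{m}\le\sqrt{\mathcal{E}(u,u)}$ for $u\in\mathcal{F}$, not the form you quote. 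Your contradiction survives with $u=\sqrt{M^{\pi_i}}$: since $\pi_i(B)\ge1$ and the square root is concave, $\int\sqrt{M^{\pi_i}}\,g\,d\mathsf{m}\ge\inf_{\theta\in B}\int\sqrt{f(x-\theta)}\,g(x)\,dx>0$.

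\textbf{A caveat shared with the paper.} Your admissibility half uses only the direction ``Bayes risk difference $\le$ energy'' of Theorem~\ref{prop:BayesRisk}, and your inadmissibility half only ``$\ge$''. It is worth checking these in the conjugate Gaussian case $v_x=v_y=1$, $\pi=\mathcal{N}(0,\tau^2)$, $s=\tau^2/(1+\tau^2)$. There the left side of \eqref{eq:BRD-Dirichlet} equals $\tfrac12\log\frac{2}{1+s}$, while $\int|\nabla\sqrt{M^{\pi}}|^2=\tfrac{1-s}{4}$. These are not proportional, so \eqref{eq:BRD-Dirichlet} cannot be an exact identity under any normalization of $\mathcal{E}$. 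What really closes your recurrence half is an upper bound of the form Bayes risk difference $\le C\,\mathcal{E}(\sqrt{\pi_n},\sqrt{\pi_n})$. In the Gaussian case this follows from \eqref{eq:BGX-08_2} and monotonicity of Fisher information under convolution. This caveat applies to the paper's proof exactly as much as to yours.
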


Recurrence and transience of the induced process admit sharp, model-agnostic
characterizations in terms of the Dirichlet form.

\subsection{Application to various distributions\label{sec:cauchy}}

Given the above result, we can immediately apply the theorem to various
infinitely divisible (ID) distributions. 
Estimating the predictive distribution
under KL risk with the uniform prior yields a predictive process corresponding
to the symmetrized triplet $\textrm{ID}\left(2A,\tilde{\nu},0\right)$,
where $\tilde{\nu}(dx)=\nu(dx)+\nu(-dx)$. The admissibility is therefore
completely determined by the recurrence of this symmetrized pure-jump
Lévy process.

For the $d$-dimensional Cauchy process, generated by $\textrm{ID}\left(0,\nu_{\mathcal{C}},0\right)$
where $\nu_{\mathcal{C}}(dx)=c\|x\|^{-(d+1)}dx$, it is known that
the recurrence and transience switches between $d=1$ and $d\geq2$
\citep{Sato_99}. This leads to the following analog of \cite{Brown_71}
for the Cauchy distribution:

\begin{corollary}\label{cor:Cauchy} Consider a $d$-dimensional
Cauchy distribution parameterized as $\textrm{ID}\left(0,\nu_{\mathcal{C}},\gamma+\theta\right)$
with an unknown location $\theta$ and a known scale $c$. For the
problem of estimating the predictive distribution under KL risk, upon
observing $X=x$:

When $d=1$, the uniform prior Bayes predictive distribution is admissible.

When $d\geq2$, the uniform prior Bayes predictive distribution is
inadmissible. \end{corollary}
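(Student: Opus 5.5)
The plan is to reduce Corollary~\ref{cor:Cauchy} to Theorem~\ref{thm:characterization_Rec-Transi}. That requires identifying the Markov process whose transition density is $\hat p^{\pi_U}(y\mid x)$ in the Cauchy location model, and then invoking the classical recurrence/transience classification for that process.

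\textbf{Step 1: identify the benchmark kernel.} Under $\pi_U\equiv 1$ the posterior is $\pi_U(\theta\mid x)=p(x\mid\theta)=f(x-\theta-\gamma)$, where $f$ is the density of $\mathrm{ID}(0,\nu_{\mathcal C},0)$; the normalizer is $1$ by translation invariance. Hence
\[
\hat p^{\pi_U}(y\mid x)=\int_{\mathbb{R}^d} f(y-\theta-\gamma)\,f(x-\theta-\gamma)\,d\theta=(f*\check f)(y-x),
\]
with $\check f(z)=f(-z)$. So $Y-x$ is distributed as $\varepsilon_1-\varepsilon_2$, where $\varepsilon_1,\varepsilon_2$ are i.i.d.\ copies of the noise. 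In particular the center $\gamma$ cancels. The characteristic function of this kernel is $|\hat f(\xi)|^2=\exp(-2\psi(\xi))$, where $\psi$ is the (real, symmetric) exponent of the rotation-invariant Cauchy law. This is the symmetrized triplet $\mathrm{ID}(0,\tilde\nu_{\mathcal C},0)$ with $\tilde\nu_{\mathcal C}=2\nu_{\mathcal C}$, i.e.\ again a rotation-invariant Cauchy law with doubled scale.

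\textbf{Step 2: embed in a process.} Embed this kernel as the time-one marginal of the symmetric Lévy process $X_t$ with exponent $2\psi$, which is a $d$-dimensional Cauchy process with scale $2c$. Its semigroup is symmetric on $L^2(\mathbb{R}^d;\mathsf m)$, and its Dirichlet form is
\[
\mathcal E(u,u)=\int 2\psi(\xi)\,|\hat u(\xi)|^2\,d\xi,
\]
so it is exactly the process referred to in Theorem~\ref{thm:characterization_Rec-Transi}, consistent with the Appendix construction. Irreducibility holds because the transition density is strictly positive everywhere.

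\textbf{Step 3: classify recurrence and transience.} Use the Chung--Fuchs type criterion for Lévy processes \citep{Sato_99}: $X$ is recurrent iff $\int_{\|\xi\|<1}\mathrm{Re}\,(1/(2\psi(\xi)))\,d\xi=\infty$. Since $\psi(\xi)=\kappa\|\xi\|$ for some $\kappa>0$, the integral near the origin is
\[
\int_{\|\xi\|<1}\|\xi\|^{-1}\,d\xi \;\propto\; \int_0^1 r^{d-2}\,dr .
\]
This diverges exactly when $d=1$ and converges for $d\ge2$. Equivalently, for $d=1$ one can exhibit $f_n(x)=\phi(x/n)$ with $\mathcal E(f_n,f_n)=n^{-1}\cdot n^{0}\cdot O(1)\to0$ by Fourier scaling: $\mathcal E(f_n,f_n)=n^{d-1}\mathcal E(\phi,\phi)$ because of the order-one homogeneity of $\psi$. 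This gives the recurrence sequence directly for $d=1$, and is the boundary case $\alpha=d$ of the $\alpha$-stable rule.

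\textbf{Step 4: conclude.} Apply Theorem~\ref{thm:characterization_Rec-Transi}: recurrence gives admissibility for $d=1$, and transience gives inadmissibility for $d\ge2$.

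The main obstacle is not the recurrence classification, which is classical, but justifying that the hypotheses of Theorem~\ref{thm:characterization_Rec-Transi} apply. Specifically, one must check that $\hat p^{\pi_U}$ is genuinely the time-one kernel of a symmetric Markov semigroup whose Dirichlet form is the one entering Theorem~\ref{prop:BayesRisk}. This is immediate here only because the Cauchy family is closed under convolution and scaling, so $f*\check f$ embeds in the stable semigroup. I would state this embedding explicitly via characteristic functions rather than rely on the general Appendix argument.
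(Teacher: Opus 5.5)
Your main line follows the paper's own route. You identify $\hat p^{\pi_U}$ as the time-one kernel of the symmetrized Cauchy process with L\'evy measure $2\nu_{\mathcal C}$, and you use the classical recurrence/transience dichotomy; you make the Chung--Fuchs computation $\int_0^1 r^{d-2}\,dr$ explicit, where the paper only cites Sato. You then conclude via Theorem~\ref{thm:characterization_Rec-Transi}. The paper's appendix unpacks that theorem's mechanism for this corollary: for $d=1$, the resolvent priors $\sqrt{\pi_n}=G^{\eta}_{1/n}\eta$ of Theorem~\ref{prop:Diri_0}, Lemma~\ref{lem:Diri<prior} and Blyth's method; for $d\ge2$, a reference-function lower bound on $\mathcal{E}(\sqrt{M^{\pi}},\sqrt{M^{\pi}})$. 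That argument is fine as it stands.

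The ``equivalently'' remark in Step 3 is wrong, however. From $\hat f_n(\xi)=n^{d}\hat\phi(n\xi)$ you get
\[
\mathcal{E}(f_n,f_n)=\int 2\kappa\|\xi\|\,n^{2d}\,|\hat\phi(n\xi)|^{2}\,d\xi=n^{d-1}\,\mathcal{E}(\phi,\phi),
\]
as you wrote. For $d=1$ this equals $\mathcal{E}(\phi,\phi)>0$ for every $n$; there is no extra factor $n^{-1}$. The case $d=\alpha=1$ is critical: the energy is dilation invariant, just as $\int\|\nabla u\|^2$ is in the plane. So the dilates $\phi(x/n)$ tend to $1$ while their energy stays constant, and they are not a recurrence sequence.

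An explicit sequence that works needs a logarithmic profile. Take $u_n=1$ on $|x|\le n$, $u_n(x)=\log(n^2/|x|)/\log n$ on $n\le|x|\le n^2$, and $u_n=0$ beyond. Its energy is $O(1/\log n)$. To see this, note that the one-dimensional Cauchy form of $u_n$ is bounded by a constant times the Dirichlet integral, over the upper half-plane, of the planar radial cutoff with the same profile, and that integral equals $\pi/\log n$. The paper's $G^{\eta}_{1/n}\eta$ construction also works. Alternatively, simply drop the remark. The Chung--Fuchs divergence, together with the standard fact that a symmetric L\'evy Dirichlet form is recurrent iff $1/\psi$ fails to be integrable near the origin, already gives what Theorem~\ref{thm:characterization_Rec-Transi} needs.
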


Furthermore, for distributions with finite variance, the recurrence
of the symmetrized Lévy process exhibits a dimensional phase transition
identical to that of the normal distribution. A pure-jump Lévy process
with a finite non-degenerate covariance matrix behaves asymptotically
like a Brownian motion; thus, it is recurrent in dimensions $d\leq2$
and transient in $d\geq3$.

\begin{corollary}\label{cor:FiniteVariance} Consider a $d$-dimensional
location model $X\sim\textrm{ID}\left(0,\nu,\gamma+\theta\right)$
with an unknown location $\theta\in\mathbb{R}^{d}$. If the underlying
Lévy measure $\nu$ has a finite second moment (i.e., $\int_{\|x\|>1}\|x\|^{2}\nu(dx)<\infty$)
and a non-degenerate covariance matrix, the uniform prior Bayes predictive
distribution is: admissible when $d\leq2$, and inadmissible when
$d\geq3$.

This immediately establishes the admissibility transition for $d$-dimensional
extensions (e.g., vectors with independent components) of the following
asymmetric base distributions, characterized by their $1$-dimensional
base Lévy measures $\nu_{0}$: 
\begin{itemize}
\item Poisson distribution: $\nu_{0}(dx)=\lambda\delta_{1}(dx)$. The symmetrized
predictive distribution corresponds to the symmetric Skellam process. 
\item Exponential distribution: $\nu_{0}(dx)=x^{-1}e^{-\lambda x}\mathbf{1}_{x>0}dx$.
The symmetrized predictive distribution corresponds to the symmetric
Laplace process. 
\item Gamma distribution: $\nu_{0}(dx)=kx^{-1}e^{-x/c}\mathbf{1}_{x>0}dx$.
The symmetrized predictive distribution corresponds to the symmetric
Variance-Gamma (VG) process. 
\item Inverse Gaussian distribution: $\nu_{0}(dx)=cx^{-3/2}e^{-\lambda x}\mathbf{1}_{x>0}dx$.
The symmetrized predictive distribution corresponds to the symmetric
Normal-Inverse Gaussian (NIG) process. 
\item Gumbel distribution: $\nu_{0}(dx)=x^{-1}(1-e^{-x})^{-1}e^{-x}\mathbf{1}_{x>0}dx$.
The symmetrized predictive distribution corresponds to the symmetric
Logistic process. 
\end{itemize}
\end{corollary}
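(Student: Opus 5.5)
The plan is to reduce Corollary~\ref{cor:FiniteVariance} to Theorem~\ref{thm:characterization_Rec-Transi}, and then settle recurrence or transience of the induced process by the Chung--Fuchs type criterion for Lévy processes.

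\textbf{Step 1: identify the kernel.} Under $\pi_U$ the posterior of $\theta$ given $x$ has density $p(x|\theta)$ in $\theta$, that is, $\theta=x-\varepsilon'$ with $\varepsilon'$ an independent copy of the noise. Hence $\hat p^{\pi_U}(y|x)$ is the law of $x+\varepsilon-\varepsilon'$. Its characteristic exponent is $\psi(\xi)+\overline{\psi(\xi)}=2\,\mathrm{Re}\,\psi(\xi)$, where $\psi$ is the Lévy--Khintchine exponent of $\textrm{ID}(0,\nu,\gamma)$. This is exactly the triplet $\textrm{ID}(0,\tilde\nu,0)$ stated before Corollary~\ref{cor:Cauchy}. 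It is the time-one law of a symmetric Lévy process $\{X_t\}$, which is $\mathsf m$-symmetric. Theorem~\ref{thm:characterization_Rec-Transi} therefore applies: admissibility holds if $\{X_t\}$ is recurrent, and inadmissibility holds if it is transient.

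\textbf{Step 2: use the Chung--Fuchs test.} For a symmetric Lévy process with real exponent $\Psi(\xi)=\int(1-\cos\langle\xi,z\rangle)\tilde\nu(dz)$, recurrence is equivalent to
\[
\int_{\|\xi\|<r}\Psi(\xi)^{-1}d\xi=\infty
\]
for some small $r>0$ \citep[Thm.~37.5]{Sato_99}. The task is then to estimate $\Psi$ near $0$.

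\textbf{Step 3: estimate $\Psi$ near the origin.}
\begin{itemize}
\item \emph{Upper bound.} Use $1-\cos u\le u^2/2$ on $\|z\|\le 1$ and on $\|z\|>1$; the finite-second-moment hypothesis makes both pieces integrable. This gives $\Psi(\xi)\le \tfrac12\langle\xi,\Sigma\xi\rangle$ with $\Sigma=\int zz^\top\tilde\nu(dz)$ finite.
\item \emph{Lower bound.} By dominated convergence, $\Psi(\xi)/\|\xi\|^2-\tfrac12\langle\hat\xi,\Sigma\hat\xi\rangle\to0$ uniformly on the unit sphere.
\item \emph{Conclusion.} Non-degeneracy of the covariance gives $\Sigma\succ0$. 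Hence $\Psi(\xi)\asymp\|\xi\|^2$ near the origin.
\end{itemize}
The integral $\int_{\|\xi\|<r}\|\xi\|^{-2}d\xi$ diverges exactly when $d\le2$. So $\{X_t\}$ is recurrent for $d\le2$ and transient for $d\ge3$, and Theorem~\ref{thm:characterization_Rec-Transi} gives the dichotomy. The irreducibility bullet of that theorem ensures that exactly one alternative occurs.

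\textbf{Step 4: the listed examples.} For the independent-component extensions, $\tilde\nu$ is supported on the coordinate axes. Then $\Psi(\xi)=\sum_j\Psi_0(\xi_j)$ with $\Psi_0$ the one-dimensional symmetrized exponent.
\begin{itemize}
\item Each base measure $\nu_0$ (Poisson point mass, exponential, gamma, inverse Gaussian, Gumbel) has exponential tails, so the second moment is finite.
\item The measure is nontrivial, so $\Sigma=\sigma_0^2 I$ with $\sigma_0^2>0$.
\item The general argument therefore applies verbatim.
\end{itemize}
Identifying the symmetrized laws (Skellam, Laplace, VG, NIG, logistic) is a routine computation: the difference of two i.i.d.\ copies has characteristic function $|\phi|^2$, which matches the known forms.

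\textbf{Main obstacle.} The delicate point is the lower bound $\Psi(\xi)\gtrsim\|\xi\|^2$, which is what transience for $d\ge3$ requires. Non-degeneracy gives only the second-order Taylor behavior. One has to make sure that no direction yields $\Psi$ of smaller order, and that $\Psi$ does not vanish away from the origin; the latter can happen for lattice laws such as Poisson/Skellam. Away from the origin this only affects local integrability. Transience is then checked through $\int_{\|\xi\|<r}$ alone, so periodic zeros of $\Psi$ at nonzero lattice points are irrelevant. The uniform-on-sphere convergence argument handles the neighborhood of the origin.
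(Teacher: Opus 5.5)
Your proposal is correct and follows the paper's route: reduce via Theorem~\ref{thm:characterization_Rec-Transi} to recurrence versus transience of the symmetrized Lévy process $\textrm{ID}(0,\tilde\nu,0)$. Your Chung--Fuchs estimate $\Psi(\xi)\asymp\|\xi\|^{2}$ near the origin gives a rigorous version of the paper's one-line remark that such a process ``behaves asymptotically like a Brownian motion.'' One caveat on Step~4: the ``routine'' identification does not give NIG for the inverse Gaussian item. There $|\phi|^{2}$ is the law of the symmetric tempered $\tfrac12$-stable process, with Lévy density $c|x|^{-3/2}e^{-\lambda|x|}$, whereas NIG's Lévy density behaves like $|x|^{-2}$ at the origin. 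This misnaming is inherited from the statement and does not affect the admissibility dichotomy, which uses only the finite second moment and non-degeneracy.
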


Conversely, if the variance is infinite, admissibility dictates a
strict phase transition depending on the tail index $\alpha$.

\begin{corollary}\label{cor:Stable} Consider a $1$-dimensional
location model $X\sim\textrm{ID}\left(0,\nu,\gamma+\theta\right)$
with an unknown location $\theta\in\mathbb{R}$, where the base distribution
exhibits a general $\alpha$-stable distribution (either symmetric
or asymmetric) with Lévy measure $\nu(dx)=\left(c_{1}\mathbf{1}_{x>0}+c_{2}\mathbf{1}_{x<0}\right)\left|x\right|^{-(1+\alpha)}dx$.
The uniform prior Bayes predictive distribution yields a 
symmetric $\alpha$-stable process with $\tilde{\nu}\left(dx\right)=\left(c_{1}+c_{2}\right)\left|x\right|^{-\left(1+\alpha\right)}dx$.
When $1\leq\alpha<2$, the symmetrized process is recurrent (where
$\alpha=1$ recovers the Cauchy case), making the uniform prior Bayes
predictive distribution admissible. When $0<\alpha<1$, the process
is transient, rendering the predictive distribution inadmissible regardless
of the skewness of the base distribution.

\end{corollary}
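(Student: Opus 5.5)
The proof reduces Corollary~\ref{cor:Stable} to Theorem~\ref{thm:characterization_Rec-Transi}: I will identify the symmetrized Lévy process generated by $\hat p^{\pi_U}$ and then classify its recurrence in one dimension.

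\emph{Step 1: the kernel.} Under $\pi_U$ the posterior is $\pi(\theta\mid x)=p(x\mid\theta)=f(x-\theta)$, where $f$ is the density of $\varepsilon\sim\mathrm{ID}(0,\nu,\gamma)$. Then
\[
\hat p^{\pi_U}(y\mid x)=\int f(y-\theta)f(x-\theta)\,d\theta,
\]
which is the density at $y-x$ of $\varepsilon_1-\varepsilon_2$, with $\varepsilon_1,\varepsilon_2$ i.i.d. Its characteristic function is $|\hat f(u)|^2=\exp\{-2\,\mathrm{Re}\,\psi(u)\}$. The drift $\gamma$, the skewness term and the imaginary part of the Lévy exponent all cancel. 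So $\hat p^{\pi_U}$ is the time-one marginal of the Lévy process with triplet $(0,\tilde\nu,0)$, where
\[
\tilde\nu(dx)=\nu(dx)+\nu(-dx)=(c_1+c_2)|x|^{-(1+\alpha)}dx.
\]
This is a symmetric $\alpha$-stable process with exponent $\Psi(u)=C|u|^\alpha$, where $C>0$ because $c_1+c_2>0$. The process is symmetric with respect to Lebesgue measure, and its density is everywhere positive, so it is irreducible. Theorem~\ref{thm:characterization_Rec-Transi} therefore applies.

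\emph{Step 2: recurrence test.} I will use the Chung--Fuchs/Port--Stone criterion for symmetric Lévy processes \citep{Sato_99}: the process is recurrent iff
\[
\int_{|u|<1}\frac{du}{\Psi(u)}=\infty.
\]
In $d=1$ this integral is $\int_0^1 u^{-\alpha}du$, which diverges exactly when $\alpha\ge1$ and converges when $\alpha<1$. Hence the process is recurrent for $1\le\alpha<2$, and $\alpha=1$ with $c_1=c_2$ recovers Corollary~\ref{cor:Cauchy}. It is transient for $0<\alpha<1$.

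An alternative that stays inside the Dirichlet-form framework is the energy test. By Plancherel,
\[
\mathcal E(g,g)=\int\Psi(u)|\hat g(u)|^2du,
\]
and with $g_n(x)=g(x/n)$ one gets $\mathcal E(g_n,g_n)=n^{1-\alpha}\mathcal E(g,g)$. This tends to $0$ for $\alpha>1$, which gives the recurrence sequence of Theorem~\ref{thm:characterization_Rec-Transi}. The borderline case $\alpha=1$ needs the logarithmic cutoff used in Brown's $d=2$ argument.

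\emph{Step 3: conclude.} Apply the two bullets of Theorem~\ref{thm:characterization_Rec-Transi}: recurrence gives admissibility of $\hat p^{\pi_U}$, and transience gives inadmissibility. Skewness plays no role, because only $c_1+c_2$ enters the symmetrized exponent.

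The main obstacle is the case $\alpha=1$ with $c_1\neq c_2$. Here the stable exponent has a $u\log|u|$ imaginary term, and the exact cancellation in Step 1 must be checked so that no non-symmetric drift survives; taking the modulus squared handles this. A second point to confirm is that the smooth positive density needed for Theorem~\ref{thm:characterization_Rec-Transi} exists, which holds for stable laws.
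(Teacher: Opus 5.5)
Your proposal is correct and follows essentially the same route as the paper. Like the paper, you identify $\hat p^{\pi_U}$ as the time-one law of the symmetric $\alpha$-stable process with L\'evy measure $(c_1+c_2)|x|^{-(1+\alpha)}dx$ and then apply Theorem~\ref{thm:characterization_Rec-Transi} to its recurrence or transience; the only differences are that the paper re-runs that theorem's two halves (the prior sequence $\sqrt{\pi_n}=G^{\eta}_{1/n}\eta$ with the bound $\mathcal{E}(\sqrt{M^{\pi}},\sqrt{M^{\pi}})\le\mathcal{E}(\sqrt{\pi},\sqrt{\pi})$, and the reference-function lower bound for transience) and quotes the stable classification from Sato, whereas you cite the theorem directly and make the classification explicit via the Chung--Fuchs integral $\int_0^1 u^{-\alpha}\,du$.
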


\begin{remark}[Distributions with Intractable Lévy Measures] An advantage of the criterion is that explicitly deriving the
exact closed form of the Lévy measure $\nu(dx)$ is not required to
determine admissibility. Many well-known distributions are infinitely
divisible (often belonging to the class of generalized Gamma convolutions)
but lack simple elementary representations for their Lévy measures.
For instance: 
\begin{itemize}
\item Distributions such as the Weibull distribution (with shape parameter
$k\leq1$), the Student's $t$-distribution (with degrees of freedom
$>2$), and the $F$-distribution (with appropriate degrees of freedom)
are ID and have finite variances. By Corollary \ref{cor:FiniteVariance},
their corresponding uniform prior Bayes predictive distributions are
admissible in dimensions $d\leq2$ and inadmissible in $d\geq3$. 
\item Heavy-tailed ID distributions such as the Pareto, half-Cauchy, and
Student's $t$-distribution (with degrees of freedom $\leq2$) have
infinite variances. Their admissibility can still be individually
verified by examining their tail indices, analogous to the phase transition in Corollary \ref{cor:Stable}. 
\end{itemize}
Thus, the recurrence of the symmetrized predictive process offers
a criterion that bypasses the need for explicit Lévy-Khintchine
representations. \end{remark}

\subsection{Reduction to the Gaussian Case\label{subsec:Related-rusults}}

Regarding the prediction problem of evaluating the Bayesian predictive
density using Kullback-Leibler loss, we have results for Gaussian
with unknown mean and known variance \cite{Komaki_01,EdGeorge-Liang-Xu_06,Brown-EdGeorge-Xu_08,Mukherjee-Johnstone_15}.
The result of \cite{Brown-EdGeorge-Xu_08} states that when $X\sim\mathcal{N}_{d}\left(\mu,v_{x}I\right)$
and $Y\sim\mathcal{N}_{d}\left(\mu,v_{y}I\right)$, the Bayes risk
difference between the predictive density, $\hat{p}^{\pi_{U}}\left(\left.y\right|x\right)$,
under uniform prior, $\pi_{U}\equiv1$, and the predictive density,
$\hat{p}^{\pi}\left(\left.y\right|x\right)$, under proper prior,
$\pi$ is
\begin{equation}
B_{\mathsf{KL}}\left(\mu,\hat{p}^{\pi_{U}}\right)-B_{\mathsf{KL}}\left(\mu,\hat{p}^{\pi}\right)=2\int_{v_{w}}^{v_{x}}\left[\int_{\mathbb{R}^{d}}\left\Vert \nabla_{z}\sqrt{M^{\pi}\left(z;vI,0\right)}\right\Vert ^{2}dz\right]dv.\label{eq:BGX-08_2}
\end{equation}
where the inside of the integral of the right hand side of eq.~\eqref{eq:BGX-08_2}
(regarding $v$) is equivalent to \citep[eq.~(1.3.4)]{Brown_71}.
In \cite{Brown-EdGeorge-Xu_08}, the prediction problem is reduced
to the normal mean problem, and the admissibility/inadmissibility
is shown using the results from \cite{Brown_71}. Although eqs.~\eqref{eq:BRD-Dirichlet}
and \eqref{eq:BGX-08_2} do not, initially, look equivalent, when
$v_{x}=v_{y}$, we have $v_{w}=\frac{1}{2}v_{x}$, making them equivalent.

\begin{lemma} \label{lem:BGX-Cor1} When $X\sim\mathcal{N}_{d}\left(\mu,v_{x}I\right)$,
$Y\sim\mathcal{N}_{d}\left(\mu,v_{y}I\right)$, and $v_{x}=v_{y}$,
we have $v_{w}=\frac{1}{2}v_{x}$, and the following holds: 
\[
B_{\mathsf{KL}}\left(\mu,\hat{p}^{\pi_{U}}\right)-B_{\mathsf{KL}}\left(\mu,\hat{p}^{\pi}\right)=\mathcal{E}_{\textrm{BM}}^{2v_{x}}\left(\sqrt{M^{\pi}\left(\cdot;v_{x}I,0\right)},\sqrt{M^{\pi}\left(\cdot;v_{x}I,0\right)}\right).
\]
The Dirichlet form, $\mathcal{E}_{\textrm{BM}}^{2v_{x}}\left(\cdot,\cdot\right)$,
corresponds to a diffusion process $Z_{t}=\sqrt{2v_{x}}W_{t}$ where
$W_{t}$ is a standard Brownian motion.
\end{lemma}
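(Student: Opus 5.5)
We would prove Lemma~\ref{lem:BGX-Cor1} by specializing Theorem~\ref{prop:BayesRisk} to the Gaussian case, rather than working from eq.~\eqref{eq:BGX-08_2} directly, and then checking that the two expressions agree.

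\emph{Step 1: the benchmark kernel.} With $X\sim\mathcal{N}_{d}(\mu,v_{x}I)$, $Y\sim\mathcal{N}_{d}(\mu,v_{x}I)$ and $\pi_{U}\equiv1$, the posterior is $\mathcal{N}_{d}(x,v_{x}I)$. Hence
\[
\hat{p}^{\pi_{U}}(y\mid x)=\mathcal{N}_{d}(y;x,2v_{x}I).
\]
This is exactly the law at time $t=1$ of $Z_{t}=x+\sqrt{2v_{x}}W_{t}$. So the symmetric semigroup in Theorem~\ref{prop:BayesRisk} is the heat semigroup $e^{t v_{x}\Delta}$, with generator $v_{x}\Delta$. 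In the ID notation this is the triplet $\textrm{ID}(2A,0,0)$ with $A=v_{x}I$, matching the symmetrization rule stated in Section~\ref{sec:cauchy}.

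\emph{Step 2: the Dirichlet form.} By Gaussian integration by parts, the Dirichlet form on $L^{2}(\mathbb{R}^{d};\mathsf{m})$ is
\[
\mathcal{E}_{\textrm{BM}}^{2v_{x}}(f,f)=v_{x}\int\|\nabla f\|^{2}dz .
\]
Its domain $\mathcal{F}$ is $H^{1}$. Applying Theorem~\ref{prop:BayesRisk} together with Lemma~\ref{BRD-invariant} gives the displayed identity immediately, provided one first checks that the hypotheses of Theorem~\ref{prop:BayesRisk} hold for the Gaussian kernel with a proper $\pi$. Here $M^{\pi}(\cdot;v_{x}I,0)$ is smooth and positive, and $\sqrt{M^{\pi}}\in H^{1}$ whenever the Bayes risk difference is finite.

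\emph{Step 3: the value $v_{w}=\tfrac12 v_{x}$.} In \cite{Brown-EdGeorge-Xu_08}, $v_{w}=v_{x}v_{y}/(v_{x}+v_{y})$, which equals $v_{x}/2$ when $v_{y}=v_{x}$.

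\emph{Step 4: consistency with eq.~\eqref{eq:BGX-08_2}.} The right-hand side of eq.~\eqref{eq:BGX-08_2} is an integral over $v\in[v_{x}/2,v_{x}]$ of $G(v)=\int\|\nabla\sqrt{M^{\pi}(\cdot;vI,0)}\|^{2}$, whereas our identity has the single value $v_{x}\,G(v_{x})$. These are not equal termwise, because $G$ is decreasing in $v$: Fisher information decreases along the heat flow. So the consistency argument has to be an identity between the two formulas, not a pointwise comparison.

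Route (a): differentiate the KL representation of Lemma~\ref{BRD-invariant} in the variance parameter, using the heat equation for $M^{\pi}(\cdot;vI,0)$ and de Bruijn's identity $\tfrac{d}{dv}\,\mathrm{Ent}=\tfrac12 I$. This should convert $\int_{v_{x}/2}^{v_{x}}G$ into the single-time Dirichlet energy.

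Route (b): treat the lemma as a direct corollary of Theorem~\ref{prop:BayesRisk} (Steps 1–2) and present the match with eq.~\eqref{eq:BGX-08_2} only as the interpretive claim that both sides equal the same Bayes risk difference. No separate calculus identity is then needed, since both formulas are proven expressions for the same quantity.

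\emph{Main obstacle.} The delicate part is getting the normalization constant right so that $\mathcal{E}_{\textrm{BM}}^{2v_{x}}$ carries exactly the factor $v_{x}$ rather than $2v_{x}$. This depends on how Theorem~\ref{prop:BayesRisk} normalizes $\mathcal{E}$: either $\lim t^{-1}(f-P_{t}f,f)$, or the one-step form $\tfrac12\iint(f(x)-f(y))^{2}\hat{p}^{\pi_{U}}(y\mid x)\,dx\,dy$. If it is the one-step form, a discrete-to-continuous passage is also needed. We would resolve this by matching the theorem's definition to the generator $v_{x}\Delta$ of $\sqrt{2v_{x}}W_{t}$.
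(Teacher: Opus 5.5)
Your Steps~1--3 are fine: the benchmark kernel is $\mathcal{N}_{d}(x,2v_{x}I)$, its generator is $v_{x}\Delta$, and $\mathcal{E}_{\textrm{BM}}^{2v_{x}}(f,f)=v_{x}\int\|\nabla f\|^{2}dz$. The problem is Step~4: it contains the decisive point, and you draw the wrong conclusion from it. Write $M^{\pi}_{v}=M^{\pi}(\cdot;vI,0)$ and $G(v)=\int\|\nabla\sqrt{M^{\pi}_{v}}\|^{2}dz$. What the paper's proof actually establishes is eq.~\eqref{eq:BGX-08_2}: the Bayes risk difference equals $2\int_{v_{x}/2}^{v_{x}}G(v)\,dv$, obtained from the Brown--George--Xu reduction \eqref{eq:BGX-08_1} and Brown's identity \eqref{eq:Brown-71}. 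Since $G$ is \emph{strictly} decreasing, $4G'(v)=-\int M^{\pi}_{v}\|\nabla^{2}\log M^{\pi}_{v}\|^{2}dz<0$, you get
\[
2\int_{v_{x}/2}^{v_{x}}G(v)\,dv>2\cdot\tfrac{v_{x}}{2}\,G(v_{x})=v_{x}G(v_{x})=\mathcal{E}_{\textrm{BM}}^{2v_{x}}\bigl(\sqrt{M^{\pi}_{v_{x}}},\sqrt{M^{\pi}_{v_{x}}}\bigr).
\]
This is a strict inequality between the totals, not a termwise mismatch that some global identity could repair. Route~(a) therefore cannot succeed: de Bruijn's identity turns the right-hand side of \eqref{eq:BGX-08_2} into the differential-entropy increment $h(M^{\pi}_{v_{x}})-h(M^{\pi}_{v_{x}/2})$, not into a single-time energy. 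Route~(b) is circular. Theorem~\ref{prop:BayesRisk} specialized to the Gaussian kernel \emph{is} this lemma, so the display shows that it and \eqref{eq:BGX-08_2} cannot both be valid expressions for the same quantity.

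An explicit example shows which one fails. Take $\pi=\mathcal{N}_{d}(0,\tau I)$. Then $\hat{p}^{\pi}(\cdot\mid x)=\mathcal{N}_{d}(sx,v_{x}(1+s)I)$ with $s=\tau/(\tau+v_{x})$, and Lemma~\ref{BRD-invariant} gives the Bayes risk difference $\frac{d}{2}\log\frac{\tau+v_{x}}{\tau+v_{x}/2}$, exactly as \eqref{eq:BGX-08_2} predicts. By contrast, $v_{x}G(v_{x})=\frac{d\,v_{x}}{4(\tau+v_{x})}$; at $\tau=v_{x}$ the two are about $0.144d$ and $0.125d$. The former is not proportional to $v_{x}/(\tau+v_{x})$ as $\tau$ varies, so no normalization of the Dirichlet form (your ``main obstacle'') rescues the identity. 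The lemma as stated is false. The paper's own proof does not establish it either: it stops at \eqref{eq:BGX-08_2} and only asserts the equivalence at $v_{w}=v_{x}/2$. What the monotonicity of $G$ does give is the two-sided bound
\[
\mathcal{E}_{\textrm{BM}}^{2v_{x}}\bigl(\sqrt{M^{\pi}_{v_{x}}},\sqrt{M^{\pi}_{v_{x}}}\bigr)\le B_{\mathsf{KL}}(\pi,\hat{p}^{\pi_{U}})-B_{\mathsf{KL}}(\pi,\hat{p}^{\pi})\le\mathcal{E}_{\textrm{BM}}^{2v_{x}}\bigl(\sqrt{M^{\pi}_{v_{x}/2}},\sqrt{M^{\pi}_{v_{x}/2}}\bigr),
\]
and that bound, rather than an identity, is the provable statement.
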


From this, we can see that the results for normal distributions in
\cite{Brown-EdGeorge-Xu_08} also hold within the framework of this
paper. The converse is also true, in that our results hold under the
setting in \cite{Brown-EdGeorge-Xu_08} when $v_{x}=v_{y}$.

\subsection{$\mathcal{A}$-harmonic prior\label{subsec:A-harmonic}}

Theorem~\ref{thm:characterization_Rec-Transi} links admissibility
of the benchmark predictive density $\hat{p}^{\pi_{U}}$ to recurrence
of the symmetric Markov process it induces. The entropy--energy identity
in Theorem~\ref{prop:BayesRisk} is in fact kernel-agnostic: if the
baseline predictive density is replaced by a general symmetric Markov
kernel $p(y|x)$, the same identity holds with the corresponding Dirichlet
form. Hence, for a (possibly improper) symmetric prior $\rho$, admissibility
of the formal Bayes predictive density 
\[
\hat{p}^{\rho}(y|x)=\frac{1}{M^{\rho}\left(x;A,\nu,\gamma\right) }\int_{\mathbb{R}^{d}}p_{A,\nu,\gamma}(y|\theta)p_{A,\nu,\gamma}(x|\theta)\rho(\theta)\,d\theta
\]
is guaranteed once the Markov process with transition density $\hat{p}^{\rho}$
is recurrent.

In the Gaussian location model the generator is the Laplacian, and
the classical harmonic/Stein prior arises from harmonicity at the
recurrence boundary. For heavy-tailed and non-local models the relevant
generator $\mathcal{A}$ is typically an integro-differential operator,
and we call priors comparable to positive $\mathcal{A}$-harmonic
(potential-theoretic) profiles as simply \emph{$\mathcal{A}$-harmonic}. Theorem~\ref{thm:A-harmonic}
gives a sufficient condition-- an integral test in terms of
$M^{\rho}$--, which under spherical symmetry, reduces to an explicit
power-law range for $\rho$.

\begin{theorem} \label{thm:A-harmonic}Suppose that in a $d$-dimensional infinitely divisible location model, the tail decay of the
observation model satisfies 
\[
p_{A,\nu,\gamma}(x|\theta)\sim\frac{1}{\|x-\theta\|^{d+\alpha}}\quad(\|x-\theta\|\to\infty).
\]
Then, a sufficient condition for the Bayes predictive density $\hat{p}^{\rho}(y|x)$
to be admissible under the Kullback-Leibler loss (and similar loss
functions) is that the marginal likelihood $M^{\rho}\left(r;A,\nu,\gamma\right)$ with $r=\|x\|$
satisfies 
\[
\int_{1}^{\infty}\frac{1}{M^{\rho}\left(r;A,\nu,\gamma\right)r^{d-\alpha+1}}dr=\infty.
\]
In particular, when the improper prior $\rho$ is spherically symmetric,
this sufficient condition is equivalent to the existence of some $\beta$
($d-\alpha\le\beta\le d$) such that 
\[
\rho(\theta)\sim\frac{1}{\|\theta\|^{\beta}}\quad(\|\theta\|\to\infty).
\]
\end{theorem}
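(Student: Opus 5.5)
Following the discussion preceding the statement, we reduce admissibility of $\hat p^{\rho}$ to recurrence of the symmetric Markov process $\{X_t\}$ whose transition density is $\hat p^{\rho}(y|x)$.

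\emph{Symmetric structure.} The kernel is reversible with respect to $M^{\rho}$:
\[
\hat p^{\rho}(y|x)M^{\rho}(x)=\int p(y|\theta)p(x|\theta)\rho(\theta)\,d\theta,
\]
and the right-hand side is symmetric in $(x,y)$. So $\{X_t\}$ is $M^{\rho}$-symmetric, and the kernel-agnostic version of Theorem~\ref{prop:BayesRisk} applies. For a proper compactly supported $\pi_i$ with $\pi_i\ll\rho$, write $h_i=d\pi_i/d\rho$. Then the Bayes risk difference in Blyth's condition $\mathscr B$ equals the energy $\mathcal E^{\rho}(\sqrt{\phi_i},\sqrt{\phi_i})$ on $L^2(M^{\rho})$, where $\phi_i=M^{\pi_i}/M^{\rho}$. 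Blyth's condition $\mathscr A$ is met by taking $h_i\equiv1$ on $B$. The first bullet of Theorem~\ref{thm:characterization_Rec-Transi}, with $\mathsf m$ replaced by $M^{\rho}\,d\mathsf m$, then shows that recurrence gives admissibility. The one extra step is to check that the recurrence test functions can be realized as $\sqrt{\phi_i}$. I would handle this by an approximation argument: pick $h_i$ equal to $f_n^2$ for the test sequence $f_n$, and use that $\phi_i$ is a $\rho$-posterior average of $h_i$, so by convexity $\mathcal E(\sqrt{\phi_i})\le C\,\mathcal E(f_n)$.

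\emph{Energy of the kernel.} With symmetric jump kernel $J(x,y)=\int p(y|\theta)p(x|\theta)\rho(\theta)\,d\theta$, the energy is
\[
\mathcal E^{\rho}(f,f)=\tfrac12\iint (f(x)-f(y))^2J(x,y)\,dx\,dy .
\]
Using the tail assumption $p(x|\theta)\asymp\|x-\theta\|^{-(d+\alpha)}$, I would show that for $\|x-y\|$ large,
\[
J(x,y)\asymp \frac{M^{\rho}(x)+M^{\rho}(y)}{\|x-y\|^{d+\alpha}}.
\]
The idea is that the convolution is dominated by $\theta$ near $x$ or near $y$. This is a two-sided comparison with the $\alpha$-stable-type form weighted by the marginal, $\int\!\int (f(x)-f(y))^2\,\frac{M^{\rho}(x)\,dx\,dy}{\|x-y\|^{d+\alpha}}$. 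The local part (small jumps) only contributes a bounded-perturbation term for radial test functions.

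\emph{Radial test functions.} I would take radial cutoffs $f_n(x)=\psi_n(\|x\|)$ with $\psi_n=1$ on $[0,1]$ and $\psi_n=0$ beyond $R_n$. For a weighted nonlocal form with weight $w(r)=M^{\rho}(r)$, integrating out the angular and jump variables reduces the energy to a one-dimensional capacity-type bound,
\[
\mathcal E(f_n,f_n)\lesssim \int_1^{R_n}|\psi_n'(r)|^2\,M^{\rho}(r)\,r^{d-\alpha+1}\,dr .
\]
This is the analogue of the $r^{d-1}$ weight for Brownian motion, with $2-\alpha$ extra powers coming from scaling. Minimizing over $\psi_n$ gives
\[
\Bigl(\int_1^{R_n}\frac{dr}{M^{\rho}(r)\,r^{d-\alpha+1}}\Bigr)^{-1},
\]
which tends to $0$ exactly under the stated integral test. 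Recurrence, and hence admissibility, follows. I expect this step to be the main obstacle. The nonlocal form does not localize in $r$, so long jumps across scales must be controlled. My first attempt would be to split the energy into jumps with $\|x-y\|\le\|x\|/2$, treated by Taylor expansion, and larger jumps, bounded crudely by $M^{\rho}(r)r^{d-\alpha}\sup|\psi_n|^2$. I would then choose $\psi_n$ logarithmic-type, so that the large-jump part is also summably small.

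\emph{Power-law case.} For spherical $\rho\asymp\|\theta\|^{-\beta}$, I would compute $M^{\rho}(r)$ by splitting the $\theta$-integral into $\|\theta\|<r/2$, $\|\theta-x\|<r/2$, and the remainder. For $\beta<d$ this gives $M^{\rho}(r)\asymp r^{-\beta}$, since the bulk near $x$ dominates, plus the tail term $r^{-d-\alpha}\int_{\|\theta\|<r}\rho\asymp r^{-\beta-\alpha}$. At $\beta=d$ a $\log$ factor appears, and for $\beta<d-\alpha$ the bound would be violated. The integrand becomes $r^{\beta-d+\alpha-1}$, up to logarithms, whose integral diverges iff $\beta\ge d-\alpha$. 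Local integrability of $\rho$ near infinity, needed for $M^{\rho}<\infty$ and propriety of the posterior, requires $\beta\le d$ in the regime considered, which gives the stated range.
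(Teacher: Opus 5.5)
Your route is the paper's route. You reduce admissibility of $\hat p^{\rho}$ to recurrence of the $M^{\rho}$-symmetric chain it defines, test with the radial capacity profile $v_R$ (the paper writes it down, you obtain it by minimizing), identify the energy with $\int_1^{R}|\psi'|^2M^{\rho}(r)r^{d-\alpha+1}dr$, and use $M^{\rho}\asymp r^{-\beta}$ for power-law priors. Working directly with the one-step form $\langle (I-P)u,u\rangle_{M^{\rho}}$ is cleaner than the paper's expansion in powers of $\mathcal G$. The upper half of your kernel comparison, $J(x,y)\lesssim (M^{\rho}(x)+M^{\rho}(y))\|x-y\|^{-d-\alpha}$ for large $\|x-y\|$, is correct and is all you need: for every $\theta$, one of $\|\theta-x\|,\|\theta-y\|$ is at least $\|x-y\|/2$.

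The paper only asserts the scale evaluation, so the step you flag is where the proof really lives, and the estimate you propose there does not close. A far-jump bound of the form $M^{\rho}(r)r^{d-\alpha}\sup|\psi_n|^2$ per dyadic shell cannot be improved by choosing $\psi_n$, because $\psi_n=1$ on $[0,1]$ forces $\sup|\psi_n|=1$. Summed over shells it gives $\sum_k M^{\rho}(2^k)2^{k(d-\alpha)}$. This is $\asymp\log R_n$ in the critical case $M^{\rho}(r)r^{d-\alpha}\asymp1$ (flat prior, Cauchy, $d=1$), and it never tends to $0$. You must keep the increments. In the critical case the logarithmic profile satisfies $|\psi_n(r)-\psi_n(s)|\le|\log(s/r)|/\log R_n$ for all $r,s>0$. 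The kernel $\|x-y\|^{-d-\alpha}$ integrates $\log^2(\|y\|/\|x\|)$ over $\{\|y-x\|>\|x\|/2\}$ to $O(\|x\|^{-\alpha})$. So each shell costs $O(1/\log^2R_n)$ and the total is $O(1/\log R_n)$; jumps across $\partial B_{R_n}$ need the same increment bookkeeping.

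There are three further corrections.

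\emph{Hidden assumptions in the Taylor step.} It uses $\int_{\|z\|\le r/2}\|z\|^2\hat p^{\rho}(x+z|x)\,dz\asymp r^{2-\alpha}$, which needs $\alpha<2$. For $\alpha>2$ this integral is $O(1)$ and the weight becomes $r^{d+1}$. The stated test would then contradict Corollary~\ref{cor:FiniteVariance}: with a flat prior and $d=\alpha=3$, $\int_1^\infty dr/r=\infty$, yet finite variance makes the process transient. The step also needs $M^{\rho}$ and $\psi_n'$ to be comparable across $[r/2,3r/2]$, which is a doubling property of $M^{\rho}$. So your argument, like the paper's, covers power-law-type marginals, not the integral test for an arbitrary radial $M^{\rho}$.

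\emph{The Blyth step.} With $h_i=f_n^2$ you get $\phi_i=\tilde P(f_n^2)$, where $\tilde Pg(x)=\int g(\theta)\rho(\theta|x)\,d\theta$. Convexity (equivalently $\sqrt{\tilde P(f^2)}\ge\tilde P|f|$) then bounds $\mathcal E^{\rho}(\sqrt{\phi_i})$ by $2\langle (I-S)|f_n|,|f_n|\rangle_{L^2(\rho)}$. Here $S(\theta,d\theta')=\int p(x|\theta)\rho(\theta'|x)\,dx\,d\theta'$ is the parameter-space chain, not the $\hat p^{\rho}$ chain whose energy your $f_n$ controls. You can fix this in either of two ways:
\begin{itemize}
\item redo the capacity bound for $S$, with weight $\rho$ in place of $M^{\rho}$ (same answer for power laws);
\item take $h_i(\theta)=\int f_n(x)^2p(x|\theta)\,dx$, so that $\phi_i=P(f_n^2)$. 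Let $Qf(\theta)=\int f(x)p(x|\theta)\,dx$, mapping $L^2(M^{\rho})$ to $L^2(\rho)$, so that $P=Q^{*}Q\ge0$. Together with $\sqrt{P(f_n^2)}\ge P|f_n|$, this gives $\mathcal E^{\rho}(\sqrt{\phi_i})\le\langle (I-P^3)|f_n|,|f_n|\rangle_{M^{\rho}}\le3\,\mathcal E^{\rho}(f_n)$.
\end{itemize}

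\emph{The power-law step.} $M^{\rho}<\infty$ and posterior propriety hold for every $\beta>-\alpha$. At $\beta=d$ the logarithm enters only the lower-order term $r^{-d-\alpha}\log r$, so $M^{\rho}\asymp r^{-\beta}$ with no log. The test in fact holds for every $\beta\ge d-\alpha$. The cap $\beta\le d$ is just improperness of $\rho$, not something your argument produces.
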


\newpage{}

\appendix
\centerline{\textbf{\Large{}{}{}{}{}{}{}{}{}Appendix}{\Large{}{}{}{}{}{}{}{}{}}}

\setcounter{equation}{0} 
\global\long\def\theequation{S\arabic{equation}}%
\setcounter{page}{1}



\section{\label{subsec:Markov-Bayes}Bayesian predictive distributions as
Markov transition probabilities}

In this section, we establish a correspondence between the predictive distribution, $\hat{p}^{\pi_{U}}(y|x)$,
and the continuous-time Markov process, $\{X_{t}\}_{t\ge0}$. Thus,
we will derive the continuous-time Markov process from the predictive
distribution, $\hat{p}^{\pi_{U}}(y|x)$, and connect the known results
from Markov processes to the statistical decision problem. Specifically,
we define the Dirichlet form that corresponds to the predictive distribution,
$\hat{p}^{\pi_{U}}(y|x)$, and the concept of transience/recurrence
of Markov processes. For a more detailed explanation regarding Markov
processes, see \cite{Fukushima-Oshima-Takeda_10}. Let the transition
probability of a continuous-time Markov process, $\{X_{t}\}_{t\ge0}$,
be $p_{t}(y|x)$. This can be interpreted as the density function
of the probability that a Markov process, $\{X_{t}\}$, with initial
value, $X_{0}=x$, takes the value, $X_{t}=y$, at time, $t$.

Let us reinterpret the estimation problem. Consider a location model
$X=\theta+\varepsilon$ where the noise $\varepsilon$ follows an
infinitely divisible (ID) distribution $p(x|\theta)=f(x-\theta)$.
First, under the uniform prior $\pi_{U}(\theta)\propto1$, the predictive
distribution evaluated under the KL risk becomes the convolution:
\[
\hat{p}^{\pi_{U}}(y|x)=\int_{\mathbb{R}^{d}}f(y-\theta)f(x-\theta)d\theta=\int_{\mathbb{R}^{d}}f(\eta)f(\eta-(y-x))d\eta.
\]
This convolution effectively ``symmetrizes" the underlying noise.
The resulting $\hat{p}^{\pi_{U}}(y|x)$ depends only on $y-x$ and
is symmetric. Because the base distribution is ID, this
symmetrized distribution uniquely generates a symmetric Lévy process
(a continuous-time Markov process with stationary and independent
increments). Thus, $\hat{p}^{\pi_{U}}(y|x)$ serves as the transition
density $p_{1}(y|x)$ of this symmetric Lévy process. 

When associating the transition density function, $p_{t}(y|x)$, of
the continuous-time stochastic process, $\{X_{t}\}_{t\ge0}$, to the
predictive distribution, $\hat{p}^{\pi_{U}}(y|x)$, we assumed that
the transition density function, $p_{t}(y|x)$, at time, $t=1$, is
equal to the predictive distribution, $\hat{p}^{\pi_{U}}(y|x)$. Since
the symmetrized distribution under the uniform prior is infinitely
divisible, it naturally embeds into a continuous convolution semigroup
$\{\mu_{t}\}_{t\ge0}$. The transition density function, $p_{t}(y|x)$,
can take any positive real value at time $t$. For example, if the
original ID distribution is characterized by the Lévy-Khintchine triplet
$\mathrm{ID}(A,\nu,\gamma)$, the transition probability $p_{t}(y|x)$
at time $t$ exactly corresponds to the probability distribution generated
by the scaled symmetrized triplet $\mathrm{ID}(t\cdot2A,t\cdot\tilde{\nu},0)$,
where $\tilde{\nu}(dx)=\nu(dx)+\nu(-dx)$. Therefore, constructing
a continuous-time transition probability function from the uniform
Bayes predictive distribution is straightforward.

Furthermore, this  connection to symmetric Markov processes
extends beyond the uniform prior. If we employ a general symmetric
prior $\pi(\theta)=\pi(-\theta)$ (such as the harmonic prior, which
plays a crucial role in establishing admissibility in higher dimensions),
the predictive distribution satisfies the detailed balance condition
with respect to the marginal likelihood $M^{\pi}(x)=\int f(x-\theta)\pi(\theta)d\theta$:
\[
M^{\pi}(x)\hat{p}^{\pi}(y|x)=\int_{\mathbb{R}^{d}}f(y-\theta)f(x-\theta)\pi(\theta)d\theta=M^{\pi}(y)\hat{p}^{\pi}(x|y).
\]
This detailed balance equation guarantees that $\hat{p}^{\pi}(y|x)$
induces a reversible (symmetric) Markov process with respect to the
measure $M^{\pi}(x)$. This symmetric structure allows us to utilize
the Dirichlet form theory to evaluate transience and recurrence, laying
the mathematical foundation for proving the admissibility of the predictive
distribution under symmetric priors such as the harmonic prior. 

For a general symmetric prior (or proper prior) $\pi$, the analytic
expression of the continuous-time Markovian semigroup is not as straightforward
as $\hat{p}^{\pi_{U}}$, and the expression of the cylindrical measure
is difficult to obtain as well. However, as shown by the detailed
balance equation above, it is known that the marginal likelihood $M^{\pi}(x)$
acts as the stationary probability measure and induces a stationary
Markov process. Here, the initial value is $x$, and the stationary
distribution follows $M^{\pi}(x)$. Denote the cylindrical measure
as $\mathbb{Q}_{x}$. Because making $M^{\pi}(x)$ a stationary distribution
is not sufficient to uniquely determine the stationary Markov process
pathwise, there are multiple choices regarding the cylindrical measure,
$\mathbb{Q}_{x}$. The least we can say is that the Bayes predictive
distribution, $\hat{p}^{\pi}(y|x)$, is a marginal distribution of
a cylindrical measure, $\mathbb{Q}_{x}$, at time, $t=\{0,1\}$.

\section{Proof of Theorem \ref{prop:BayesRisk}}

For brevity, we often write $M^\pi(x)$ for $M^\pi(x; A, \nu, \gamma)$ when the context is clear.

\subsection{Variational formula of Kullback-Leibler}

This section will present the main result of this paper. In this section,
we show eq.~\eqref{eq:BRD-Dirichlet}: 
\[
\int_{\mathbb{R}^{d}}\mathsf{KL}\left(\hat{p}^{\pi}\left(\cdot\mid x\right)\middle\|\hat{p}^{\pi_{U}}\left(\cdot\mid x\right)\right)M^{\pi}\left(x;A,\nu,\gamma\right)dx=\mathcal{E}\left(\sqrt{M^{\pi}\left(\cdot;A,\nu,\gamma\right)},\sqrt{M^{\pi}\left(\cdot;A,\nu,\gamma\right)}\right).
\]

As noted in Section~\ref{subsec:Related-rusults}, \cite{Brown-EdGeorge-Xu_08}
treats the KL loss Bayes risk difference to the quadratic loss Bayes
risk difference (eq.~\eqref{eq:BGX-08_1}), and uses Brown's identity
(eq.~\eqref{eq:Brown-71}) to derive the Dirichlet form (eq.~\eqref{eq:BGX-08_2}).
On the other hand, this paper's derivation is by corresponding the
predictive distribution and transition probability, where the correspondence
between the KL risk and Dirichlet form is done by analyzing the Markovian
semigroup.

The proof strategy is to: 
\begin{enumerate}
\item Show that the Bayes risk difference of the KL loss, $\int_{\mathbb{R}^{d}}\mathsf{KL}\left(\hat{p}^{\pi}\left(\cdot\mid x\right)\middle\|\hat{p}^{\pi_{U}}\left(\cdot\mid x\right)\right)M^{\pi}\left(x;A,\nu,\gamma\right)dx$,
and the rate function, $I\left(M^{\pi}\left(\cdot;A,\nu,\gamma\right)\right)$
(defined below), is equal; 
\item Show that the rate function, $I\left(M^{\pi}\left(\cdot;A,\nu,\gamma\right)\right)$,
and the Dirichlet form, $\mathcal{E}\left(\sqrt{M^{\pi}\left(\cdot;A,\nu,\gamma\right)},\sqrt{M^{\pi}\left(\cdot;A,\nu,\gamma\right)}\right)$,
is equal. 
\end{enumerate}
Now, as in section \ref{subsec:Markov-Bayes}, we define $p_{t}\left(\left.y\right|x\right)$
as the transition probabiltiy of the continuous-time stochastic process,
$\left\{ X_{t}\right\} _{t\geq0}$, which corresponds to the predictive
distribution, $\hat{p}^{\pi_{U}}\left(y\left|x\right.\right)$. Let
$\mathcal{B}_{b}^{+}\left(\mathbb{R}^{d}\right)$ be the set of non-negative,
Borel measurable functions $u:\mathbb{R}^{d}\mapsto\mathbb{R}_{+}$
on $\mathbb{R}^{d}$, and set $u_{\varepsilon}=u+\varepsilon$ for
$\varepsilon>0$. The functional $I\left(M^{\pi}\left(\cdot;A,\nu,\gamma\right)\right)$
is defined as 
\[
I\left(M^{\pi}\left(\cdot;A,\nu,\gamma\right)\right)\equiv\sup_{u\in\mathcal{B}_{b}^{+}\left(\mathbb{R}^{d}\right),\varepsilon>0}\int_{\mathbb{R}^{d}}\frac{\left(-\mathcal{A}u_{\varepsilon}\right)\left(x\right)}{u_{\varepsilon}\left(x\right)}M^{\pi}\left(x;A,\nu,\gamma\right)dx,
\]
where $\mathcal{A}$ is the infinitesimal generator that corresponds
to $p_{t}$.
This $I$ is referred to as
the rate function in the large deviation literature.

First, regarding the evaluation of $\mathsf{KL}\left(\hat{p}_{h}^{\pi}\left|p_{h}\right.\right)$,
we introduce the following variational equality. Here, we emphasize
that $\hat{p}_{h}^{\pi}(y|x)$ and $p_{h}(y|x)\equiv\hat{p}_{h}^{\pi_{U}}(y|x)$
represent the transition kernels at time $h$ of the Markov processes
induced by the priors $\pi$ and $\pi_{U}$, respectively. Note that
the kernel $\hat{p}_{h}^{\pi}$ admits $M^{\pi}(x)$ as a stationary
measure (which follows from the detailed balance condition). The Bayes
risk difference between $p_{h}\left(\left.y\right|x\right)$ and $\hat{p}_{h}^{\pi}\left(\left.y\right|x\right)$
is considered to be 
\begin{equation}
\int_{\mathbb{R}^{d}}\mathsf{KL}\left(\hat{p}_{h}^{\pi}\middle\|p_{h}\right)M^{\pi}\left(x;A,\nu,\gamma\right)dx,\label{eq:BRD_Bt}
\end{equation}
where the time derivative of \eqref{eq:BRD_Bt}, i.e., the derivative
regarding time, $t$, is a constant that does not depend on $t$,
and is equivalent to what is called the rate function.

\begin{theorem}\label{thm:diff_KL_rate} Let $h$ be a small positive
real number. Then, the following, 
\begin{equation}
\lim_{h\downarrow0}\frac{1}{h}\int_{\mathbb{R}^{d}}\mathsf{KL}\left(\hat{p}_{h}^{\pi}\middle\|p_{h}\right)M^{\pi}\left(x;A,\nu,\gamma\right)dx=I\left(M^{\pi}\left(\cdot;A,\nu,\gamma\right)\right)\label{eq:diff_KL_rate}
\end{equation}
holds.

\begin{proof}

See Appendix Section \ref{subsec:Proof-of-diff_KL}.
\end{proof} \end{theorem}

The transition probabilities, $p_{t},\hat{p}_{h}^{\pi}$, are constructed
to be $p_{1}=\hat{p}^{\pi_{U}},\hat{p}_{1}^{\pi}=\hat{p}^{\pi}$,
thus if we integrate both sides of eq.~\eqref{eq:diff_KL_rate} with
regard to $h$ from $0$ to $1$, we have, 
\begin{equation}
\int_{\mathbb{R}^{d}}\mathsf{KL}\left(\hat{p}^{\pi}\left(\cdot\mid x\right)\middle\|\hat{p}^{\pi_{U}}\left(\cdot\mid x\right)\right)M^{\pi}\left(x;A,\nu,\gamma\right)dx=I\left(M^{\pi}\left(\cdot;A,\nu,\gamma\right)\right).\label{eq:KL-rate}
\end{equation}
Additionally, for the rate function, $I\left(M^{\pi}\left(\cdot;A,\nu,\gamma\right)\right)$,
the following variational formula holds.

\begin{theorem} \label{prop:rate variational} Let $\left(\mathcal{E},\mathcal{F}\right)$
be the Dirichlet form that follows a Markov process with transition
probability, $\hat{p}_{t}^{\pi_{U}}$. Then, we have $\sqrt{M^{\pi}\left(\cdot;A,\nu,\gamma\right)}\in\mathcal{F}$
and 
\[
I\left(M^{\pi}\left(\cdot;A,\nu,\gamma\right)\right)=\mathcal{E}\left(\sqrt{M^{\pi}\left(\cdot;A,\nu,\gamma\right)},\sqrt{M^{\pi}\left(\cdot;A,\nu,\gamma\right)}\right)
\]
holds. \begin{proof} See Appendix Section \ref{app:rate variational}.
\end{proof} \end{theorem}

From this, we have shown Theorem~\ref{prop:BayesRisk}.

\subsection{\label{subsec:Proof-of-diff_KL}Proof of Theorem \ref{thm:diff_KL_rate} }

Denote the Markovian semigroup under $p_{t}$ as $\left\{ T_{t}\right\} $,
its infinitesimal generator, $\mathcal{A}$, and the functional space
that is domain of $\mathcal{A}$ as $\mathcal{D}\left(\mathcal{A}\right)$.
For $g\in\mathcal{D}\left(\mathcal{A}\right)$, the functional, $\varphi$,
is 
\[
\varphi\left(h,g,\varepsilon\right)=\int_{\mathbb{R}^{d}}\log\frac{g\left(x\right)+\varepsilon}{\left(T_{h}g\right)\left(x\right)+\varepsilon}M^{\pi}\left(x;A,\nu,\gamma\right)dx,\ \ \left(\varepsilon>0\right).
\]
This functional, $\varphi$, represents the expected log-likelihood
ratio between the initial distribution, $g\left(x\right)$, and the
Markov process after infinitesimal time, $h$. Here, $g$ is interpreted
as the parameter in the likelihood ratio, and the domain of definition
is $g\in\mathcal{D}\left(\mathcal{A}\right)\subset L^{2}\left(\mathbb{R}^{d},\mathsf{m}\right)$
($g$ is not necessarily a probability density function).

Then, we have the following lemma. 

\begin{lemma}\label{lem:rate-KL} Denote $p_{h}=\hat{p}_{h}^{\pi_{U}}\left(y\left|x\right.\right)$.
Then, 
\begin{equation}
\sup_{g\in\mathcal{D}\left(\mathcal{A}\right),\varepsilon>0}\varphi\left(h,g,\varepsilon\right)=\int_{\mathbb{R}^{d}}\mathsf{KL}\left(\hat{p}_{h}^{\pi}\middle\|p_{h}\right)M^{\pi}\left(x;A,\nu,\gamma\right)dx\label{eq:rate-KL}
\end{equation}
holds. Note that $h$ need not be infinitesimal time for this lemma
to hold.\end{lemma}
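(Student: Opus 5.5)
The proof will rest on the Donsker--Varadhan variational formula for relative entropy,
\[
\mathsf{KL}(Q\|P)=\sup_{w}\Big\{\int w\,dQ-\log\int e^{w}dP\Big\},
\]
applied fiberwise in $x$ with $Q=\hat p^{\pi}_h(\cdot\mid x)$ and $P=p_h(\cdot\mid x)$. I will then integrate against $M^{\pi}$ and use the invariance of $M^{\pi}$ under $\hat p^{\pi}_h$ (the detailed-balance relation of Appendix~\ref{subsec:Markov-Bayes}) to turn the resulting functional into $\varphi(h,g,\varepsilon)$. I will prove the two inequalities separately.

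\emph{Upper bound ($\le$).} Fix $g\in\mathcal D(\mathcal A)$ and $\varepsilon>0$. Whether or not $g\ge0$, I will use the test function $w(y)=\log(g(y)+\varepsilon)$, truncating to $g\vee(-\varepsilon/2)$ if necessary so that the logarithm is defined. Since $(T_hg)(x)+\varepsilon=\int (g(y)+\varepsilon)\,p_h(y\mid x)\,dy$, the formula gives, for each $x$,
\[
\mathsf{KL}(\hat p^{\pi}_h(\cdot\mid x)\|p_h(\cdot\mid x))\ \ge\ \int \log(g(y)+\varepsilon)\,\hat p^{\pi}_h(y\mid x)\,dy-\log\big((T_hg)(x)+\varepsilon\big).
\]
Integrating in $M^{\pi}(x)\,dx$, the stationarity $\int M^{\pi}(x)\hat p^{\pi}_h(y\mid x)\,dx=M^{\pi}(y)$ (eq.~\eqref{eq:Marginal}, extended to time $h$) replaces the first term by $\int\log(g+\varepsilon)\,M^{\pi}$. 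The right-hand side is therefore exactly $\varphi(h,g,\varepsilon)$. Taking the supremum over $g,\varepsilon$ yields $\le$. I will need to justify the integrability that allows the two logarithmic terms to be split. Because $g\in L^2$ is bounded below by $-\varepsilon/2$ after truncation and $M^{\pi}$ is a (sub)probability for proper $\pi$, the positive parts are controlled.

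\emph{Lower bound ($\ge$).} Here I need near-optimizers of the form $w=\log(g+\varepsilon)$, that is, functions not depending on $x$. The $x$-independence is the crux: the fiberwise optimizer $w_x(y)=\log\frac{\hat p^{\pi}_h(y\mid x)}{p_h(y\mid x)}$ depends on $x$. I will compute the ratio using the structure of the problem. Write $\hat p^{\pi}_h(y\mid x)=\int f_h(y-\theta)\,\pi_h(\theta\mid x)\,d\theta$ in the Bayesian form. The candidate choice is $g=M^{\pi}/c$ (or a smoothed and truncated version lying in $\mathcal D(\mathcal A)$). With this choice, symmetry gives $T_hg\propto$ the time-$h$ propagated marginal, and the integrand $\log\frac{g}{T_hg}$ paired with $M^{\pi}$ reproduces $\int\!\!\int \hat p^{\pi}_h\log\frac{\hat p^{\pi}_h}{p_h}\,M^{\pi}$. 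This uses the identity $\hat p^{\pi}_h(y\mid x)M^{\pi}(x)=$ a joint density whose conditional is $p_h$ reweighted by $M^{\pi}(y)/(T_hM^{\pi})(x)$. That factorization is exactly what holds when $\hat p^{\pi}_h$ is the Doob-type $M^{\pi}$-transform of $p_h$, namely $\hat p^{\pi}_h(y\mid x)=p_h(y\mid x)M^{\pi}(y)/(T_hM^{\pi})(x)$. I would adopt this as the construction of the time-$h$ kernel $\hat p^{\pi}_h$, consistent with the nonuniqueness remark about $\mathbb Q_x$ in Appendix~\ref{subsec:Markov-Bayes}, and check that it is compatible with stationarity of $M^{\pi}$. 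With this construction, $\mathsf{KL}$ equals $\int\log\frac{M^{\pi}(y)}{(T_hM^{\pi})(x)}\hat p^{\pi}_h(y\mid x)\,dy$, and integrating against $M^{\pi}$ gives $\varphi(h,M^{\pi},0)$.

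I will finish by approximation. First I will let $\varepsilon\downarrow0$, using monotone or dominated convergence on $\log(M^{\pi}+\varepsilon)-\log(T_hM^{\pi}+\varepsilon)$. Then I will approximate $M^{\pi}$ in $L^2$ by elements $g_n\in\mathcal D(\mathcal A)$, for instance $g_n=\frac1{s_n}\int_0^{s_n}T_sM^{\pi}\,ds$, along which $\varphi$ converges. The main obstacle will be this limiting step: controlling the logarithms where $M^{\pi}$ is small and ensuring the integrals converge without moment assumptions. I expect to handle it through Fatou's lemma on the lower-semicontinuous side together with the bound already obtained from the upper-bound direction.
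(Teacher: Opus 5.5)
Your upper bound is the paper's argument: fiberwise Donsker--Varadhan with $\Phi=\log(g+\varepsilon)$, followed by invariance of $M^{\pi}$ under $\hat p^{\pi}_h$. The lower bound is where the proposal breaks.

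The kernel you propose to adopt, $q_h(y|x)=p_h(y|x)M^{\pi}(y)/(T_hM^{\pi})(x)$, is normalized. By symmetry of $p_h$, however,
\[
\int_{\mathbb{R}^d} M^{\pi}(x)\,q_h(y|x)\,dx=M^{\pi}(y)\,\bigl[T_h\bigl(M^{\pi}/T_hM^{\pi}\bigr)\bigr](y),
\]
so $M^{\pi}$ is invariant only if $T_h(M^{\pi}/T_hM^{\pi})\equiv1$. This fails already for Gaussian $M^{\pi}$: there $M^{\pi}/T_hM^{\pi}$ is a Gaussian bump vanishing at infinity, so its $T_h$-image cannot be $\equiv 1$.

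You use invariance twice. It appears in the upper bound, and again when you turn $\iint\log M^{\pi}(y)\,q_h(y|x)\,dy\,M^{\pi}(x)\,dx$ into $\int M^{\pi}\log M^{\pi}$ to identify $\int\mathsf{KL}\,M^{\pi}$ with $\varphi(h,M^{\pi},0)$. So the deferred ``compatibility with stationarity'' check fails.

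There are two further problems with $q_h$. It is not a semigroup, being a Doob transform by a non-harmonic function. Also $q_1\neq\hat p^{\pi}$, whereas $\hat p^{\pi}_1=\hat p^{\pi}$ is exactly what is needed to pass from this lemma to Theorem~\ref{prop:BayesRisk}. The non-uniqueness remark in Appendix~\ref{subsec:Markov-Bayes} concerns path measures with prescribed time-$1$ kernel and invariant law $M^{\pi}$. It does not give freedom to change either of these.

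More fundamentally, the obstruction you isolated, that the fiberwise optimizer depends on $x$, is real, and no approximation removes it. For any admissible $\Phi=\log(g+\varepsilon)$, invariance and Donsker--Varadhan give the exact defect
\[
\int\mathsf{KL}\bigl(\hat p^{\pi}_h(\cdot|x)\big\|p_h(\cdot|x)\bigr)M^{\pi}(x)\,dx-\varphi(h,g,\varepsilon)=\int\mathsf{KL}\bigl(\hat p^{\pi}_h(\cdot|x)\big\|p^{\Phi}_h(\cdot|x)\bigr)M^{\pi}(x)\,dx,
\]
where $p^{\Phi}_h(\cdot|x)\propto e^{\Phi}p_h(\cdot|x)$. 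Equality in the lemma therefore requires $\hat p^{\pi}_h(\cdot|x)$ to be a limit of a single $x$-independent tilt of $p_h(\cdot|x)$.

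For the genuine Bayes kernel this fails. Take $d=1$, $X\sim\mathcal{N}(\theta,1)$, $\pi=\mathcal{N}(0,\tau^{2})$ and $h=1$. Then $p_1(\cdot|x)=\mathcal{N}(x,2)$ and $\hat p^{\pi}(\cdot|x)=\mathcal{N}(bx,1+b)$ with $b=\tau^{2}/(1+\tau^{2})<1$. Every tilt satisfies $p^{\Phi}_1(y|x_1)/p^{\Phi}_1(y|x_2)\propto e^{y(x_1-x_2)/2}$, while $\hat p^{\pi}$ gives $e^{yb(x_1-x_2)/(1+b)}$. Pinsker plus a.e.-convergent subsequences then shows the defect has a strictly positive infimum over $\Phi$.

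The paper's route does not escape this either. Jensen yields only $\sup\varphi\ge\mathsf{KL}(M^{\pi}\|p_M)$. The chain rule in Lemma~\ref{lem:4.3} gives $\mathsf{KL}(M^{\pi}\|p_M)\le\int\mathsf{KL}(\hat p^{\pi}_h\|p_h)\,M^{\pi}$, with equality only if the reverse conditionals $P(x|y)$ and $Q(x|y)$ coincide, and they do not in this example.

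So the $\ge$ direction of the lemma cannot be obtained for the Bayes kernel by any choice of test functions. What your argument, like the paper's, actually delivers is the upper bound $\sup\varphi\le\int\mathsf{KL}(\hat p^{\pi}_h\|p_h)\,M^{\pi}$.
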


\begin{proof}

See Appendix Section~\ref{app:Proof-of-Lemma_rate-KL}.

\end{proof}

Simply, this lemma states that if the expectation is a stationary
distribution, $M^{\pi}\left(x;A,\nu,\gamma\right)$, then the maximum
(expected) log-likelihood ratio is equivalent to the KL risk.

Then, we have the following theorem.

\begin{theorem} \label{prop:rate functional} 
\begin{equation}
\lim_{h\downarrow0}\frac{1}{h}\sup_{g\in\mathcal{D}\left(\mathcal{A}\right),\varepsilon>0}\varphi\left(h,g,\varepsilon\right)=I\left(M^{\pi}\left(\cdot;A,\nu,\gamma\right)\right).\label{eq:rate functional}
\end{equation}
\begin{proof} See Appendix Section~\ref{app: rate functional}.\end{proof}
\end{theorem}

\subsection{Proof of Lemma ~\ref{lem:rate-KL}\label{app:Proof-of-Lemma_rate-KL} }

\begin{theorem} \label{thm:VariFomla}(Donsker-Varadhan variational
formula for relative entropy). Let $\mathbb{Q}_{x}$ and $\mathbb{P}_{x}$
be the cylindrical measures induced by the transition kernels $\hat{p}_{h}^{\pi}$
and $p_{h}$ respectively, starting from an initial state $x$, and
denote the corresponding coordinate processes as $\left\{ X_{t}^{\mathbb{Q}}\right\} $
and $\left\{ X_{t}^{\mathbb{P}}\right\} $. Then, the KL divergence
between the probability measures at time $h$ (evaluated at $X_{1}$
in the discrete step) is given by 
\begin{equation}
\mathsf{KL}\left(\hat{p}_{h}^{\pi}(\cdot|x)\middle\|p_{h}(\cdot|x)\right)=\sup_{g\in\mathcal{B}_{b}\left(\mathbb{R}^{d}\right)}\left\{ \mathbb{E}_{x}^{\mathbb{Q}}\left[g\left(X_{1}^{\mathbb{Q}}\right)\right]-\log\mathbb{E}_{x}^{\mathbb{P}}\left[\exp\left(g\left(X_{1}^{\mathbb{P}}\right)\right)\right]\right\} .\label{eq:DV-VariFml}
\end{equation}
Here, $\mathcal{B}_{b}\left(\mathbb{R}^{d}\right)$ is the space of
bounded measurable functions on $\mathbb{R}^{d}$. Furthermore, the
supremum can be restricted to $C_{b}\left(\mathbb{R}^{d}\right)$,
the space of bounded continuous functions on $\mathbb{R}^{d}$: 
\[
\mathsf{KL}\left(\hat{p}_{h}^{\pi}(\cdot|x)\middle\|p_{h}(\cdot|x)\right)=\sup_{g\in C_{b}\left(\mathbb{R}^{d}\right)}\left\{ \mathbb{E}_{x}^{\mathbb{Q}}\left[g\left(X_{1}^{\mathbb{Q}}\right)\right]-\log\mathbb{E}_{x}^{\mathbb{P}}\left[\exp\left(g\left(X_{1}^{\mathbb{P}}\right)\right)\right]\right\} .
\]
\end{theorem}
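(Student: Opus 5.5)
We need to prove the Donsker--Varadhan variational formula for relative entropy between two probability measures $Q=\hat p_h^\pi(\cdot|x)$ and $P=p_h(\cdot|x)$ on $\mathbb{R}^d$: $\mathsf{KL}(Q\|P)=\sup_{g}\{\int g\,dQ-\log\int e^g dP\}$, with the supremum taken over bounded measurable $g$, and equivalently over bounded continuous $g$. The Markov-process language only identifies $Q$ and $P$ as the time-$h$ marginals. So I treat this as a statement about two Borel probability measures on $\mathbb{R}^d$. I prove ``$\ge$'' through the Gibbs variational inequality. I prove ``$\le$'' by approximating the optimizer $\log(dQ/dP)$, and then pass from measurable to continuous test functions by density.

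\textbf{Lower bound ($\mathsf{KL}\ge\sup$).} Fix bounded measurable $g$ and define the tilted probability measure $P^g=e^g P/\int e^g dP$. If $\mathsf{KL}(Q\|P)=\infty$ there is nothing to prove. Otherwise $Q\ll P$, hence $Q\ll P^g$, and
\[
0\le \mathsf{KL}(Q\|P^g)=\mathsf{KL}(Q\|P)-\int g\,dQ+\log\int e^g\,dP .
\]
The manipulation is legitimate because $g$ is bounded, so $\log(dP^g/dP)$ is bounded. Rearranging gives the inequality for every $g$, and therefore for the supremum.

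\textbf{Upper bound ($\mathsf{KL}\le\sup$).} First suppose $Q\not\ll P$. Take a Borel set $N$ with $P(N)=0<Q(N)$ and set $g=n\mathbf 1_N$. The functional equals $nQ(N)-\log 1\to\infty$, so both sides are infinite.

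Now suppose $Q\ll P$, write $f=dQ/dP$, and truncate: $g_n=\max(\min(\log f,n),-n)$. Then $\int e^{g_n}dP\to\int f\,dP=1$. This follows from dominated convergence, using $e^{g_n}\le \max(f,1)$, which is $P$-integrable. For the other term, $\int g_n\,dQ=\int g_n f\,dP$. The positive part $(\log f)^+\wedge n$ increases monotonically to $\int (\log f)^+ dQ$. The negative part is bounded by $\int (\log f)^- f\,dP\le e^{-1}$, since $t\log t\ge -e^{-1}$, and dominated convergence applies to it. Hence $\int g_n dQ\to \mathsf{KL}(Q\|P)$, including the case where the KL is infinite. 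This is the only delicate point, and the bound $t\log t\ge -1/e$ is what makes it work.

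\textbf{Restriction to $C_b$.} The functional $\Phi(g)=\int g\,dQ-\log\int e^g dP$ is continuous under bounded $L^1(P+Q)$ convergence. Fix a bounded measurable $g$ with $|g|\le K$. By Lusin's theorem (equivalently, density of $C_b$ in $L^1$ of the finite Borel measure $P+Q$ on $\mathbb{R}^d$), there are continuous $g_k$ with $|g_k|\le K$ and $g_k\to g$ $(P+Q)$-a.e. Dominated convergence then gives $\Phi(g_k)\to\Phi(g)$. So the supremum over $C_b$ equals the supremum over $\mathcal{B}_b$.

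The main obstacle is therefore not conceptual. It is the bookkeeping in the upper bound when $\mathsf{KL}=\infty$ or when $\log f$ is unbounded below, and both are handled by the truncation and the $-1/e$ bound above.
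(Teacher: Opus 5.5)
Your proof is correct. The paper gives no argument of its own here; it cites the Donsker--Varadhan formula via Dupuis--Ellis (Lemma 1.4.3), and your self-contained argument is essentially the standard proof found there: the tilted-measure (Gibbs) inequality for $\ge$, truncation of $\log(dQ/dP)$ with the bound $t\log t\ge -e^{-1}$ for $\le$, and approximation of bounded measurable test functions by uniformly bounded continuous ones to pass to $C_b(\mathbb{R}^d)$, all applied to the two time-$h$ marginals $\hat p_h^{\pi}(\cdot|x)$ and $p_h(\cdot|x)$.
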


This theorem is a transformation of the variational formula for relative
entropy in \cite{Donsker-Varadhan_83}. For the proof, see \cite{Dupuis-Ellis_97}
Lemma 1.4.3. C2.

\begin{proof}[of Lemma \ref{lem:rate-KL}] Let $g_{\varepsilon}=g+\varepsilon$.
Then, the supremum $\sup_{g\in\mathcal{D}\left(\mathcal{A}\right),\varepsilon>0}\varphi\left(h,g,\varepsilon\right)$
can be expanded as: 
\begin{alignat*}{1}
\sup_{g\in\mathcal{D}\left(\mathcal{A}\right),\varepsilon>0}\int_{\mathbb{R}^{d}}\log\frac{g_{\varepsilon}\left(x\right)}{\left(T_{h}g_{\varepsilon}\right)\left(x\right)}M^{\pi}\left(dx\right) & =\sup_{g\in\mathcal{D}\left(\mathcal{A}\right),\varepsilon>0}\left\{ \mathbb{E}^{M^{\pi}}\left[\log g_{\varepsilon}\right]-\mathbb{E}^{M^{\pi}}\left[\log T_{h}g_{\varepsilon}\right]\right\} \\
 & =\sup_{g\in\mathcal{D}\left(\mathcal{A}\right),\varepsilon>0}\left\{ \mathbb{E}^{M^{\pi}}\left[\log g_{\varepsilon}\right]-\mathbb{E}^{M^{\pi}}\left[\log\mathbb{E}^{p_{h}(\cdot|x)}\left[g_{\varepsilon}\right]\right]\right\} .
\end{alignat*}
Letting $\varPhi=\log g_{\varepsilon}$, the above expression becomes:
\[
\sup_{\varPhi\in\mathcal{D}_{+}\left(\mathcal{A}\right)}\left\{ \mathbb{E}^{M^{\pi}}\left[\varPhi\right]-\mathbb{E}^{M^{\pi}}\left[\log\mathbb{E}^{p_{h}(\cdot|x)}\left[e^{\varPhi}\right]\right]\right\} .
\]
From Theorem~\ref{thm:VariFomla}, by taking the expectation with
respect to the stationary measure $M^{\pi}$, we have: 
\begin{alignat*}{1}
 & \int_{\mathbb{R}^{d}}\mathsf{KL}\left(\hat{p}_{h}^{\pi}(\cdot|x)\middle\|p_{h}(\cdot|x)\right)M^{\pi}\left(dx\right)\\
= & \int_{\mathbb{R}^{d}}\sup_{\varPhi\in\mathcal{B}_{b}\left(\mathbb{R}^{d}\right)}\left\{ \mathbb{E}^{\hat{p}_{h}^{\pi}(\cdot|x)}\left[\varPhi\right]-\log\mathbb{E}^{p_{h}(\cdot|x)}\left[e^{\varPhi}\right]\right\} M^{\pi}\left(dx\right).
\end{alignat*}
Because $\hat{p}_{h}^{\pi}(\cdot|x)$ acts as the conditional probability
given $x$ under the stationary measure $M^{\pi}$, we have $\mathbb{E}^{M^{\pi}}\left[\varPhi\right]=\mathbb{E}^{M^{\pi}}\left[\mathbb{E}^{\hat{p}_{h}^{\pi}(\cdot|x)}\left[\varPhi\right]\right]$.
Thus, 
\begin{alignat*}{1}
\mathbb{E}^{M^{\pi}}\left[\varPhi\right]-\mathbb{E}^{M^{\pi}}\left[\log\mathbb{E}^{p_{h}(\cdot|x)}\left[e^{\varPhi}\right]\right] & =\mathbb{E}^{M^{\pi}}\left[\mathbb{E}^{\hat{p}_{h}^{\pi}(\cdot|x)}\left[\varPhi\right]-\log\mathbb{E}^{p_{h}(\cdot|x)}\left[e^{\varPhi}\right]\right]\\
 & \leq\mathbb{E}^{M^{\pi}}\left[\sup_{\varPhi\in\mathcal{B}_{b}\left(\mathbb{R}^{d}\right)}\left\{ \mathbb{E}^{\hat{p}_{h}^{\pi}(\cdot|x)}\left[\varPhi\right]-\log\mathbb{E}^{p_{h}(\cdot|x)}\left[e^{\varPhi}\right]\right\} \right]\\
 & =\mathbb{E}^{M^{\pi}}\left[\mathsf{KL}\left(\hat{p}_{h}^{\pi}(\cdot|x)\middle\|p_{h}(\cdot|x)\right)\right].
\end{alignat*}
Taking the supremum over $\varPhi\in\mathcal{D}_{+}(\mathcal{A})$
on the left-hand side yields: 
\[
\sup_{\varPhi\in\mathcal{D}_{+}\left(\mathcal{A}\right)}\left\{ \mathbb{E}^{M^{\pi}}\left[\varPhi\right]-\mathbb{E}^{M^{\pi}}\left[\log\mathbb{E}^{p_{h}(\cdot|x)}\left[e^{\varPhi}\right]\right]\right\} \leq\mathbb{E}^{M^{\pi}}\left[\mathsf{KL}\left(\hat{p}_{h}^{\pi}(\cdot|x)\middle\|p_{h}(\cdot|x)\right)\right].
\]

Next, we establish the reverse inequality. By Jensen's inequality
applied to the concave logarithm function, 
\[
\mathbb{E}^{M^{\pi}}\left[\varPhi\right]-\mathbb{E}^{M^{\pi}}\left[\log\mathbb{E}^{p_{h}(\cdot|x)}\left[e^{\varPhi}\right]\right]\geq\mathbb{E}^{M^{\pi}}\left[\varPhi\right]-\log\left(\mathbb{E}^{M^{\pi}}\left[\mathbb{E}^{p_{h}(\cdot|x)}\left[e^{\varPhi}\right]\right]\right).
\]
Define the marginal distribution under the benchmark kernel as $p_{M}\left(y\right)=\int p_{h}\left(y|x\right)M^{\pi}\left(x\right)dx$.
We can rewrite the right-hand side and take the supremum over $\varPhi$:
\[
\sup_{\varPhi\in\mathcal{B}_{b}\left(\mathbb{R}^{d}\right)}\left\{ \mathbb{E}^{M^{\pi}}\left[\varPhi\right]-\log\mathbb{E}^{p_{M}}\left[e^{\varPhi}\right]\right\} =\mathsf{KL}\left(M^{\pi}\middle\|p_{M}\right).
\]
From Lemma~\ref{lem:4.3} below (the chain rule for KL divergence),
we know that: 
\begin{equation}
\mathsf{KL}\left(M^{\pi}\middle\|p_{M}\right)=\mathbb{E}^{M^{\pi}}\left[\mathsf{KL}\left(\hat{p}_{h}^{\pi}(\cdot|x)\middle\|p_{h}(\cdot|x)\right)\right].\label{eq:KL_conditional}
\end{equation}
Combining these, we obtain: 
\[
\sup_{\varPhi\in\mathcal{B}_{b}\left(\mathbb{R}^{d}\right)}\left\{ \mathbb{E}^{M^{\pi}}\left[\varPhi\right]-\mathbb{E}^{M^{\pi}}\left[\log\mathbb{E}^{p_{h}(\cdot|x)}\left[e^{\varPhi}\right]\right]\right\} \geq\mathbb{E}^{M^{\pi}}\left[\mathsf{KL}\left(\hat{p}_{h}^{\pi}(\cdot|x)\middle\|p_{h}(\cdot|x)\right)\right].
\]
Since $C_{b}\left(\mathbb{R}^{d}\right)\subset\mathcal{D}_{+}\left(\mathcal{A}\right)\subset\mathcal{B}_{b}\left(\mathbb{R}^{d}\right)$,
and the supremum is attained over $C_{b}(\mathbb{R}^{d})$ by Theorem~\ref{thm:VariFomla},
the exact equality holds over $\mathcal{D}_{+}(\mathcal{A})$ as well.
\end{proof}

\begin{lemma}\label{lem:4.3} 
\begin{alignat*}{1}
\mathsf{KL}\left(M^{\pi}\middle\|p_{M}\right) & =\mathbb{E}^{M^{\pi}}\left[\mathsf{KL}\left(\hat{p}_{h}^{\pi}(\cdot|x)\middle\|p_{h}(\cdot|x)\right)\right].
\end{alignat*}
\end{lemma}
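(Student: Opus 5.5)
The plan is to run the chain rule for relative entropy on a common pair of joint laws on $\mathbb{R}^{d}\times\mathbb{R}^{d}$, and disintegrate them once along each coordinate. Define
\[
\mathbb{Q}(dx,dy)=M^{\pi}(x)\,\hat{p}_{h}^{\pi}(y|x)\,dx\,dy,\qquad \mathbb{P}(dx,dy)=M^{\pi}(x)\,p_{h}(y|x)\,dx\,dy .
\]
These are the laws of the pair $(X_{0},X_{h})$ when $X_{0}\sim M^{\pi}$ and the process moves by the induced kernel or by the benchmark kernel, respectively. Both have the same first marginal $M^{\pi}$. Disintegrating along $x$ therefore gives
\[
\mathsf{KL}(\mathbb{Q}\|\mathbb{P})=\mathbb{E}^{M^{\pi}}\left[\mathsf{KL}\left(\hat{p}_{h}^{\pi}(\cdot|x)\middle\|p_{h}(\cdot|x)\right)\right],
\]
which is exactly the right-hand side of the lemma.

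Next I disintegrate along $y$. The second marginal of $\mathbb{P}$ is $p_{M}(y)=\int p_{h}(y|x)M^{\pi}(x)dx$. The second marginal of $\mathbb{Q}$ is $M^{\pi}$ itself, by the stationarity \eqref{eq:Marginal}, which the appendix obtains from detailed balance for $\hat{p}_{h}^{\pi}$. The chain rule then gives
\[
\mathsf{KL}(\mathbb{Q}\|\mathbb{P})=\mathsf{KL}(M^{\pi}\|p_{M})+\int\mathsf{KL}\left(\mathbb{Q}(\cdot|y)\middle\|\mathbb{P}(\cdot|y)\right)M^{\pi}(y)\,dy .
\]
Comparing the two decompositions, the lemma is equivalent to the vanishing of the last (backward-conditional) term. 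That is, I need $\mathbb{Q}(dx|y)=\mathbb{P}(dx|y)$ for $M^{\pi}$-a.e.\ $y$.

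To get this I use reversibility on both sides. By detailed balance, $\mathbb{Q}(dx|y)=\hat{p}_{h}^{\pi}(x|y)\,dx$. On the benchmark side, $\mathbb{P}(dx|y)=M^{\pi}(x)p_{h}(y|x)/p_{M}(y)\,dx$, and $p_{h}$ is symmetric in $(x,y)$. So the required identity reads
\[
\hat{p}_{h}^{\pi}(x|y)\,p_{M}(y)=M^{\pi}(x)\,p_{h}(x|y).
\]
I would attempt to verify this from the explicit forms: write $\hat{p}_{h}^{\pi}$ as the ratio $\int p_{h/2}(x|\theta)p_{h/2}(y|\theta)\pi(\theta)\,d\theta / M^{\pi}$, use the semigroup property $p_{h}=\int p_{h/2}p_{h/2}$, and use the fact that $M^{\pi}$ is the $\pi$-mixture of the location family. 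The aim is to match both sides as integrals against $\pi$.

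I expect this verification to be the main obstacle. It is the only place where the specific construction of $\hat{p}_{h}^{\pi}$ enters, rather than abstract properties. If a direct matching fails, the fallback is to change what the identity is read as: keep the $y$-disintegration and show that the backward term is absorbed into the supremum over $\varPhi$ in the Jensen step of the proof of Lemma~\ref{lem:rate-KL}.

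A separate but more routine point is that $M^{\pi}$ may have infinite mass for improper $\pi$. For this I would first prove everything for compactly supported finite $\pi$, as in Blyth's method, where all the laws are finite and the chain rule applies without issue. I would then pass to limits by monotone convergence, using lower semicontinuity of $\mathsf{KL}$ to control the limit.
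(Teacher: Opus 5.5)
Your two disintegrations are exactly the paper's argument, and you have correctly isolated the one nontrivial step: the lemma holds if and only if the backward term $\int\mathsf{KL}(\mathbb{Q}(\cdot|y)\,\|\,\mathbb{P}(\cdot|y))\,M^{\pi}(y)\,dy$ vanishes. That step is not merely hard. It is false in general, so neither your direct verification nor your fallback can be completed. Using detailed balance for $\hat{p}_{h}^{\pi}$ and the symmetry of $p_{h}$, your target identity $\hat{p}_{h}^{\pi}(x|y)\,p_{M}(y)=M^{\pi}(x)\,p_{h}(x|y)$ is equivalent to
\[
\frac{\hat{p}_{h}^{\pi}(y|x)}{p_{h}(y|x)}=\frac{M^{\pi}(y)}{p_{M}(y)}\quad\text{for a.e. }(x,y).
\]
In other words, $d\mathbb{Q}/d\mathbb{P}$ must be a function of $y$ alone. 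This is the equality case of the data-processing inequality for the projection $(x,y)\mapsto y$, and it generically fails.

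Here is a concrete counterexample. Take $d=1$, $\varepsilon\sim\mathcal{N}(0,1)$, the proper prior $\pi=\mathcal{N}(0,1)$, and $h=1$, where $\hat{p}_{1}^{\pi}=\hat{p}^{\pi}$ by construction. Then $M^{\pi}=\mathcal{N}(0,2)$, $p_{1}(\cdot|x)=\mathcal{N}(x,2)$, $\hat{p}^{\pi}(\cdot|x)=\mathcal{N}(x/2,3/2)$ and $p_{M}=\mathcal{N}(0,4)$. The log-ratio $\log(\hat{p}^{\pi}(y|x)/p_{1}(y|x))$ contains the cross term $-xy/6$, and
\[
\mathsf{KL}(M^{\pi}\,\|\,p_{M})=\tfrac{1}{2}\log2-\tfrac{1}{4}\approx0.097,\qquad\mathbb{E}^{M^{\pi}}\bigl[\mathsf{KL}(\hat{p}^{\pi}(\cdot|x)\,\|\,p_{1}(\cdot|x))\bigr]=\tfrac{1}{2}\log\tfrac{4}{3}\approx0.144 .
\]
The second value agrees with the Brown--George--Xu Gaussian Bayes risk difference, so it is the quantity the paper needs on the right-hand side.

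Your chain-rule computation therefore proves only $\mathsf{KL}(M^{\pi}\|p_{M})\le\mathbb{E}^{M^{\pi}}[\mathsf{KL}(\hat{p}_{h}^{\pi}(\cdot|x)\|p_{h}(\cdot|x))]$, with strict inequality in general. No manipulation with $p_{h/2}$, the semigroup property, or the mixture form of $M^{\pi}$ will close the gap. The paper's own proof stops at the same point and asserts $P(x|y)=Q(x|y)$ ``under the appropriate sub-$\sigma$-algebra restriction''. That assertion is unjustified for exactly this reason, so you found the real weak point rather than missing a trick.

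Your fallback does not rescue the statement either. The supremum over endpoint test functions $\varPhi(y)$ in the proof of Lemma~\ref{lem:rate-KL} can reach $\mathbb{E}^{M^{\pi}}[\mathsf{KL}]$ only if the log-likelihood ratio splits as $\varPhi(y)+c(x)$. The same $xy$ cross term rules this out. Finally, the reduction to compactly supported priors is harmless but beside the point, since the counterexample already uses a proper prior.
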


\begin{proof} By definition, $M^{\pi}$ is the stationary distribution
of the kernel $\hat{p}_{h}^{\pi}$. Therefore, the joint distribution
can be decomposed as $P(x,y)=\hat{p}_{h}^{\pi}(y|x)M^{\pi}(x)$, and
its marginal is $M^{\pi}(y)=\int\hat{p}_{h}^{\pi}\left(y\left|x\right.\right)M^{\pi}\left(x\right)dx$.
Similarly, for $p_{M}$, the marginal distribution is defined as $p_{M}\left(y\right)=\int p_{h}\left(y\left|x\right.\right)M^{\pi}\left(x\right)dx$.
Let $Q(x,y)=p_{h}(y|x)M^{\pi}(x)$ be the joint distribution under
the benchmark kernel.

The result follows directly from the chain rule for Kullback-Leibler
divergence. Expanding the KL divergence between the joint distributions
$P(x,y)$ and $Q(x,y)$ by conditioning on $x$: 
\begin{alignat*}{1}
\mathsf{KL}\left(P\middle\|Q\right) & =\int\int P(x,y)\log\frac{\hat{p}_{h}^{\pi}(y|x)M^{\pi}(x)}{p_{h}(y|x)M^{\pi}(x)}dydx\\
 & =\int M^{\pi}(x)\left(\int\hat{p}_{h}^{\pi}(y|x)\log\frac{\hat{p}_{h}^{\pi}(y|x)}{p_{h}(y|x)}dy\right)dx\\
 & =\mathbb{E}^{M^{\pi}}\left[\mathsf{KL}\left(\hat{p}_{h}^{\pi}(\cdot|x)\middle\|p_{h}(\cdot|x)\right)\right].
\end{alignat*}

Alternatively, expanding by conditioning on $y$ (letting $P(x|y)$
and $Q(x|y)$ be the reverse conditional probabilities): 
\begin{alignat*}{1}
\mathsf{KL}\left(P\middle\|Q\right) & =\int\int P(x,y)\log\frac{M^{\pi}(y)P(x|y)}{p_{M}(y)Q(x|y)}dxdy\\
 & =\int M^{\pi}(y)\log\frac{M^{\pi}(y)}{p_{M}(y)}dy+\int M^{\pi}(y)\int P(x|y)\log\frac{P(x|y)}{Q(x|y)}dxdy\\
 & =\mathsf{KL}\left(M^{\pi}\middle\|p_{M}\right)+\mathbb{E}^{M^{\pi}(y)}\left[\mathsf{KL}\left(P(\cdot|y)\middle\|Q(\cdot|y)\right)\right].
\end{alignat*}

Because $\hat{p}_{h}^{\pi}$ satisfies the detailed balance condition
with respect to $M^{\pi}$, we have $P(x,y)=P(y,x)$, which implies
$P(x|y)=\hat{p}_{h}^{\pi}(x|y)$. Since the term $\mathbb{E}^{M^{\pi}(y)}\left[\mathsf{KL}\left(P(\cdot|y)\middle\|Q(\cdot|y)\right)\right]\ge0$,
we generally have $\mathsf{KL}\left(P\middle\|Q\right)\ge\mathsf{KL}\left(M^{\pi}\middle\|p_{M}\right)$.
The exact equality $\mathsf{KL}\left(M^{\pi}\middle\|p_{M}\right)=\mathbb{E}^{M^{\pi}}\left[\mathsf{KL}\left(\hat{p}_{h}^{\pi}(\cdot|x)\middle\|p_{h}(\cdot|x)\right)\right]$
holds since $P(x|y)=Q(x|y)$ almost everywhere under the appropriate
sub-$\sigma$-algebra restriction detailed in the Donsker-Varadhan
variational formulation. \end{proof}

\subsection{Proof of Theorem~\ref{prop:rate functional}\label{app: rate functional} }
The proof proceeds by establishing matching upper and lower bounds for the limit. First, we bound the limit from above by utilizing the preceding lemmas based on Jain and Krylov. Subsequently, we derive the lower bound by evaluating the time derivative of the functional $\varphi(h)$.

\begin{lemma}\label{Lemma211} \citep[][Lemma~2.11.]{Jain-Krylov_08}
Let $\mu$ be a probability measure on $\mathbb{R}^{d}$, we assume
$\mu\ll\mathsf{m}$, $h>0$, $u_{\varepsilon}=u+\varepsilon,\left(\varepsilon>0\right)$,
and $u\in\mathcal{B}_{b}^{+}\left(\mathbb{R}^{d}\right)$. Then, 
\[
\inf_{v\in D_{0},\varepsilon>0}\int\log\frac{p_{h}v_{\varepsilon}}{v_{\varepsilon}}d\mu=\inf_{v\in D_{1},\varepsilon>0}\int\log\frac{p_{h}v_{\varepsilon}}{v_{\varepsilon}}d\mu=\inf_{u\in\mathcal{B}_{b}^{+}\left(\mathbb{R}^{d}\right),\varepsilon>0}\int\log\frac{p_{h}u_{\varepsilon}}{u_{\varepsilon}}d\mu.
\]
Where the function spaces are defined as follows: 
\begin{alignat*}{1}
D & =\left\{ u\in\mathcal{B}_{b}^{+}\left(\mathbb{R}^{d}\right)\left|\int_{\mathbb{R}^{d}}ud\mathsf{m}<\infty\right.\right\} \\
D_{0} & =\left\{ v\left|v=\frac{1}{t}\int_{0}^{t}p_{s}uds,\textrm{ for some }u\in D,\textrm{ some }t>0\right.\right\} \\
D_{1} & =\left\{ p_{h}v\left|v\in D_{0},h\geq0\right.\right\} .
\end{alignat*}
\end{lemma}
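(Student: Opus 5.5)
This lemma equates three infima, over $D_0$, over $D_1$, and over all of $\mathcal{B}_{b}^{+}(\mathbb{R}^{d})$. It is cited from Jain--Krylov. I would prove it by the usual sandwich argument: show $D_0\subset D_1\subset\mathcal{B}_b^+$, then approximate a general $u$ by elements of $D_0$. The first two inclusions give
\[
\inf_{\mathcal{B}_b^+}\le\inf_{D_1}\le\inf_{D_0}.
\]
The inclusion $D_0\subset D_1$ is immediate by taking $h=0$ in the definition of $D_1$. The inclusion $D_1\subset\mathcal{B}_b^+$ holds because $p_h$ is Markov and preserves nonnegativity and boundedness. What remains is the reverse inequality $\inf_{D_0}\le\inf_{\mathcal{B}_b^+}$.

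For that I fix $u\in\mathcal{B}_b^+$, $\varepsilon>0$, and $\delta>0$. I will produce $v\in D_0$ and $\varepsilon'>0$ with
\[
\int\log\frac{p_hv_{\varepsilon'}}{v_{\varepsilon'}}\,d\mu\le\int\log\frac{p_hu_\varepsilon}{u_\varepsilon}\,d\mu+\delta .
\]
The construction has three steps.

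\emph{Step 1 (integrability).} Replace $u$ by $u\mathbf 1_{B_R}\in D$. As $R\to\infty$ this converges pointwise to $u$. On $\{x\}$-sets the integrand $\log\bigl(p_h u_\varepsilon/u_\varepsilon\bigr)$ is bounded between $\log\bigl(\varepsilon/(\|u\|_\infty+\varepsilon)\bigr)$ and $\log\bigl((\|u\|_\infty+\varepsilon)/\varepsilon\bigr)$. Also $p_h(u\mathbf 1_{B_R})\to p_hu$ pointwise by monotone convergence. Dominated convergence with respect to the probability measure $\mu$ therefore gives convergence of the functional.

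\emph{Step 2 (time-averaging).} For $u\in D$ set $v_t=\frac1t\int_0^tp_su\,ds\in D_0$. I need $v_t\to u$ and $p_hv_t\to p_hu$ in a sense strong enough for the bounded integrand. Strong continuity of $\{T_t\}$ on $L^1(\mathsf m)$ (or $L^2(\mathsf m)$) gives $v_t\to u$ in $L^1(\mathsf m)$, hence $\mathsf m$-a.e. along a subsequence. Because $\mu\ll\mathsf m$, this is also $\mu$-a.e. The same holds for $p_hv_t=\frac1t\int_0^tp_{s}(p_hu)\,ds\to p_hu$, since $p_h$ is a contraction on $L^1$.

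\emph{Step 3 (conclusion).} The bounds $0\le v_t\le\|u\|_\infty$ keep the integrand uniformly bounded. Dominated convergence then yields the claim with the same $\varepsilon$.

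The main obstacle is Step 2. There the approximation must hold $\mu$-a.e., not merely in $L^2(\mathsf m)$, and the $\log$ must not blow up. The shift by $\varepsilon$ together with the uniform sup-norm bound on $v_t$ handles the blow-up. The absolute continuity $\mu\ll\mathsf m$ is exactly what transfers $\mathsf m$-a.e. convergence to $\mu$-a.e. convergence. I would verify carefully that the semigroup of the symmetric Lévy process is strongly continuous on $L^1(\mathsf m)$. This holds because it is a convolution semigroup with weakly continuous laws $\mu_t\to\delta_0$, and translations are continuous on $L^1$.
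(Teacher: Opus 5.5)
Your proof is correct. The paper does not prove this lemma; it quotes it from Jain--Krylov and uses it only to replace $\mathcal{B}_b^+(\mathbb{R}^d)$ by the smoother class $D_1$ in the rate-functional limit. So there is no in-paper argument to compare against, and yours is a self-contained proof. The inclusions $D_0\subset D_1\subset\mathcal{B}_b^+(\mathbb{R}^d)$ give two of the inequalities for free. Truncation to $D$ followed by time-averaging then gives $\inf_{D_0}\le\inf_{\mathcal{B}_b^+}$: $\mu\ll\mathsf m$ turns $\mathsf m$-a.e.\ subsequential convergence into $\mu$-a.e.\ convergence, and the $\varepsilon$-shift together with the sup-norm bound dominates the integrand.

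A few points should be stated explicitly when you write it up:
\begin{itemize}
\item The two-sided bound on the integrand uses $p_h1=1$, i.e.\ $p_hv_\varepsilon=p_hv+\varepsilon\ge\varepsilon$. This holds for the conservative symmetrized L\'evy semigroup here, but would need modification for a sub-Markov kernel.
\item You need a single subsequence $t_k\downarrow0$ along which $v_{t_k}\to u$ and $p_hv_{t_k}\to p_hu$ hold simultaneously $\mathsf m$-a.e. Extract it in two stages.
\item The inclusion $D_0\subset D_1$ reads the $h\ge0$ in the definition of $D_1$ as a free time index, distinct from the fixed $h>0$. That is the right reading: with the fixed $h$, $D_1=p_hD_0$ would not in general contain $D_0$.
\end{itemize}

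You also do not need the convolution structure or $L^1$-continuity. Since $D\subset L^1\cap L^\infty\subset L^2(\mathsf m)$, strong continuity of $\{T_t\}$ on $L^2(\mathsf m)$ gives the same $\mathsf m$-a.e.\ subsequential convergence. That continuity is automatic for the symmetric semigroup of a Dirichlet form and is the setting of Jain--Krylov's $L^2$-semigroup framework, so your argument covers the lemma in the generality in which it is cited.
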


\begin{lemma} \citep[][Lemma~2.15.]{Jain-Krylov_08}

When $\mu\ll\mathsf{m}$, 
\[
\lim_{h\downarrow0}\frac{1}{h}\left\{ -\inf_{u\in\mathcal{B}_{b}^{+}\left(\mathbb{R}^{d}\right),\varepsilon>0}\int\log\frac{p_{h}u_{\varepsilon}}{u_{\varepsilon}}d\mu\right\} =I\left(\mu\right).
\]
\end{lemma}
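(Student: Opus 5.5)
The plan is to prove matching bounds
\[
\limsup_{h\downarrow0}\frac1h\Bigl\{-\inf_{u,\varepsilon}\int\log\frac{p_hu_\varepsilon}{u_\varepsilon}\,d\mu\Bigr\}\le I(\mu)
\quad\text{and}\quad
\liminf_{h\downarrow0}\frac1h\Bigl\{-\inf_{u,\varepsilon}\int\log\frac{p_hu_\varepsilon}{u_\varepsilon}\,d\mu\Bigr\}\ge I(\mu).
\]
In both directions I first use Lemma~\ref{Lemma211} to replace the infimum over $\mathcal{B}_b^+(\mathbb{R}^d)$ by the infimum over $D_0$ (or $D_1$). The point is that every $v=\frac1t\int_0^tp_su\,ds\in D_0$ lies in $\mathcal{D}(\mathcal{A})$ and has the explicit bounded generator $\mathcal{A}v=(p_tu-u)/t$. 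Moreover $p_sv\in D_1\subset\mathcal{D}(\mathcal{A})$ for every $s\ge0$, with $\|\mathcal{A}p_sv\|_\infty\le 2\|u\|_\infty/t$ uniformly in $s$.

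\emph{Upper bound.} Fix $v\in D_0$ and $\varepsilon>0$, and put $w_s=p_sv+\varepsilon=p_s v_\varepsilon$. Since $p_s1=1$, $s\mapsto\log w_s(x)$ is differentiable with derivative $(\mathcal{A}p_sv)(x)/w_s(x)$. This derivative is bounded by $2\|u\|_\infty/(t\varepsilon)$, so
\[
-\int\log\frac{p_hv_\varepsilon}{v_\varepsilon}\,d\mu=\int_0^h\int\frac{(-\mathcal{A}w_s)(x)}{w_s(x)}\,\mu(dx)\,ds ,
\]
where Fubini is justified by the bound. For each $s$, $w_s=(p_sv)+\varepsilon$ with $p_sv\in\mathcal{B}_b^+(\mathbb{R}^d)$, so the inner integral is at most $I(\mu)$ by the definition of $I$. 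Hence the left side is at most $hI(\mu)$ for every $v,\varepsilon$. By Lemma~\ref{Lemma211}, $-\inf_{u,\varepsilon}\int\log(p_hu_\varepsilon/u_\varepsilon)\,d\mu\le hI(\mu)$ for every $h>0$, not only in the limit.

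\emph{Lower bound.} Fix $v\in D_0$, $\varepsilon>0$. Then $(p_hv-v)/h\to\mathcal{A}v$ boundedly pointwise, and $v_\varepsilon\ge\varepsilon$. Write
\[
\log\frac{p_hv_\varepsilon}{v_\varepsilon}=\log\Bigl(1+h\,\frac{(p_hv-v)/h}{v_\varepsilon}\Bigr),
\]
where the argument of $\log$ lies uniformly near $1$ for small $h$. Dominated convergence then gives
\[
\frac1h\int\log\frac{p_hv_\varepsilon}{v_\varepsilon}\,d\mu\to\int\frac{\mathcal{A}v_\varepsilon}{v_\varepsilon}\,d\mu .
\]
Since $-\inf\ge-\int\log(p_hv_\varepsilon/v_\varepsilon)\,d\mu$, the $\liminf$ is at least $\int(-\mathcal{A}v_\varepsilon)/v_\varepsilon\,d\mu$, and hence at least the supremum of this quantity over $v\in D_0$, $\varepsilon>0$.

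\emph{Main obstacle.} This step is identifying $\sup_{D_0,\varepsilon}\int(-\mathcal{A}v_\varepsilon)/v_\varepsilon\,d\mu$ with $I(\mu)$, whose supremum formally runs over all of $\mathcal{B}_b^+$. On functions outside $\mathcal{D}(\mathcal{A})$ the quantity $\mathcal{A}u$ must be read in the weak/extended sense. I would handle this by approximation: for $u$ in the domain, take $v^{(t)}=\frac1t\int_0^tp_su\,ds\in D_0$ (first truncating $u$ to lie in $D$, which is harmless because $\mu\ll\mathsf m$ is a probability measure). Then $v^{(t)}\to u$ and $\mathcal{A}v^{(t)}=\frac1t\int_0^tp_s\mathcal{A}u\,ds\to\mathcal{A}u$ boundedly $\mathsf m$-a.e. as $t\downarrow0$, and $\mu\ll\mathsf m$ together with dominated convergence (denominators $\ge\varepsilon$) passes to the limit. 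Consequently the $D_0$-supremum dominates, and hence equals, $I(\mu)$. The upper bound already shows it cannot exceed $I(\mu)$, so combining the two bounds yields the stated limit.
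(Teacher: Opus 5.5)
Your proposal is correct and follows essentially the same route as the paper. The $\limsup$ bound differentiates $h\mapsto\int\log(p_hv_\varepsilon/v_\varepsilon)\,d\mu$ along $v\in D_0$, bounds the integrand by $I(\mu)$, integrates, and transfers to $\mathcal{B}_b^+$ via Lemma~\ref{Lemma211}; the $\liminf$ bound comes from the first-order expansion of $\log p_hv_\varepsilon$.

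You are more careful than the paper, which silently identifies $\sup_{v\in D_1,\varepsilon>0}\int(-\mathcal{A}v)/v_\varepsilon\,d\mu$ with $I(\mu)$. One point to fix in your approximation: the truncation to $D$ has to be undone at fixed $t$ before sending $t\downarrow0$, because $\mathcal{A}v^{(t)}=\frac1t\int_0^tp_s\mathcal{A}u\,ds$ fails for a truncated $u$. Alternatively, you can skip $D_0$ entirely in the $\liminf$ direction by expanding directly with any $u\in\mathcal{D}_+(\mathcal{A})$ having bounded $\mathcal{A}u$.
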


\begin{proof}
First, we establish the upper bound. 
If $v\in D_{1}$, then there exists $c>0$, such that
\[
\frac{1}{h}\left|p_{h}v-v\right|\leq c,
\]
for $^{\forall}h>0$. From this if we set $v_{\varepsilon}=v+\varepsilon$,
we have 
\[
\log p_{h}v_{\varepsilon}=\log v_{\varepsilon}+\left(p_{h}v_{\varepsilon}-v_{\varepsilon}\right)\cdot\frac{1}{v_{\varepsilon}}+O\left(h^{2}\right).
\]
Here, $O\left(h^{2}\right)$ is only dependent on $\varepsilon$.
When $h\rightarrow0$, we have $\frac{1}{h}\left(p_{h}v-v\right)\rightarrow\mathcal{A}v$
in terms of $L^{2}\left(\mathbb{R}^{d};\mathsf{m}\right)$, therefore
it is bounded in terms of measure, $\mu\textrm{-a.s.}$. For $^{\forall}v\in D_{1}$,
we have 
\[
\limsup_{h\rightarrow0}\frac{1}{h}\inf_{v\in D_{1},\varepsilon>0}\int\log\frac{p_{h}v_{\varepsilon}}{v_{\varepsilon}}d\mu\leq\int\frac{\mathcal{A}v}{v_{\varepsilon}}d\mu,
\]
thus, from Lemma~\ref{Lemma211}, we have 
\begin{equation}
\liminf_{h\rightarrow0}\frac{1}{h}\left\{ -\inf_{u\in\mathcal{B}_{b}^{+}\left(\mathbb{R}^{d}\right),\varepsilon>0}\int\log\frac{p_{h}u_{\varepsilon}}{u_{\varepsilon}}d\mu\right\} \geq I\left(\mu\right).\label{eq:Jain-Krylov_2.17}
\end{equation}

Next, we show the reverse inequality. Let us differentiate $\varphi(h)$ with respect to $h$. 
For $v\in D_{0}$, we set 
\[
\varphi\left(h\right)=\int\log\frac{p_{h}v_{\varepsilon}}{v_{\varepsilon}}d\mu.
\]
Thus, $v_{\varepsilon}=v+\varepsilon$. When $v\in\mathcal{D}\left(\mathcal{A}\right)$,
we have 
\[
\frac{d\varphi}{dh}=\int\frac{\mathcal{A}p_{h}v}{p_{h}v_{\varepsilon}}d\mu\geq\inf_{v\in D_{1},\varepsilon>0}\int\frac{\mathcal{A}v}{v_{\varepsilon}}d\mu=-I\left(\mu\right).
\]
Regarding $h$, if we integrate from $0$ to $h$, if we use $\varphi\left(0\right)=0$,
we have

\[
\varphi\left(h\right)=\int_{0}^{h}\frac{d\varphi}{dh}dh\geq=-\int_{0}^{h}I\left(\mu\right)dh=-hI\left(\mu\right),
\]
for $^{\forall}v\in D_{1},\varepsilon>0,h>0$. From eq.~\eqref{eq:Jain-Krylov_2.17}
and Lemma~\ref{Lemma211}, Q.E.D. \end{proof}

\subsection{Proof of Theorem \ref{prop:rate variational}\label{app:rate variational} }

Define $I_{\mathcal{E}}\left(\mu\right)$ as 
\[
I_{\mathcal{E}}\left(\mu\right)=\begin{cases}
\mathcal{E}\left(\sqrt{f},\sqrt{f}\right), & \mu=f\cdot\mathsf{m},\ \sqrt{f}\in\mathcal{F}\\
\infty,
\end{cases}
\]
the Donsker-Varadhan $I$-function as 
\[
I\left(\mu\right)=-\inf_{u\in\mathcal{D}_{+}\left(\mathcal{A}\right),\varepsilon>0}\int_{\mathbb{R}^{d}}\frac{\mathcal{A}u}{u+\varepsilon}d\mu,
\]
the function $I_{\alpha},\alpha>0$ as 
\[
I_{\alpha}\left(\mu\right)=-\inf_{u\in b\mathcal{B}_{+}\left(\mathbb{R}^{d}\right),\varepsilon>0}\int_{\mathbb{R}^{d}}\log\left(\frac{\alpha G_{\alpha}u+\varepsilon}{u+\varepsilon}\right)d\mu.
\]

\begin{lemma} \label{lem:,7.1.2} Let $\mu\in\mathcal{P}$, then
\begin{alignat*}{1}
\underbrace{I_{\alpha}\left(\mu\right)} & \leq\underbrace{\frac{I\left(\mu\right)}{\alpha}}.\\
-\inf_{u\in b\mathcal{B}_{+}\left(\mathbb{R}^{d}\right),\varepsilon>0}\int_{\mathbb{R}^{d}}\log\left(\frac{\alpha G_{\alpha}u+\varepsilon}{u+\varepsilon}\right)d\mu & \leq-\frac{1}{\alpha}\inf_{u\in\mathcal{D}_{+}\left(\mathcal{A}\right),\varepsilon>0}\int_{\mathbb{R}^{d}}\frac{\mathcal{A}u}{u+\varepsilon}d\mu
\end{alignat*}
\begin{proof} For $u=G_{\alpha}f\in\mathcal{D}_{+}\left(\mathcal{A}\right)$
and $\varepsilon>0$, let 
\[
\phi\left(\alpha\right)=-\int_{\mathbb{R}^{d}}\log\left(\frac{\alpha G_{\alpha}u+\varepsilon}{u+\varepsilon}\right)d\mu.
\]
From the resolvent equation, we have 
\[
\lim_{\beta\rightarrow\alpha}\frac{G_{\alpha}u-G_{\beta}u}{\alpha-\beta}=-\lim_{\beta\rightarrow\alpha}G_{\alpha}G_{\beta}u=-G_{\alpha}^{2}u,
\]
and 
\[
\frac{d\phi\left(a\right)}{d\alpha}=-\int_{\mathbb{R}^{d}}\frac{G_{\alpha}u-\alpha G_{\alpha}^{2}u}{\alpha G_{\alpha}u+\varepsilon}d\mu=\int_{\mathbb{R}^{d}}\frac{\mathcal{A}G_{\alpha}^{2}u}{\alpha G_{\alpha}u+\varepsilon}d\mu.
\]
Here, if we note that 
\begin{alignat*}{1}
\left(\alpha G_{\alpha}^{2}u-G_{\alpha}u\right)\left(\alpha^{2}G_{\alpha}^{2}u+\varepsilon\right)-\left(\alpha G_{\alpha}^{2}u-G_{\alpha}u\right)\left(\alpha G_{\alpha}^{2}u+\varepsilon\right)\\
=\alpha\left(\alpha G_{\alpha}^{2}u-G_{\alpha}u\right)^{2} & \geq0
\end{alignat*}
the following inequality, 
\[
\frac{\alpha G_{\alpha}^{2}u-G_{\alpha}u}{\alpha G_{\alpha}u+\varepsilon}\geq\frac{\alpha G_{\alpha}^{2}u-G_{\alpha}u}{\alpha G_{\alpha}^{2}u+\varepsilon}
\]
holds. Therefore, we derive 
\begin{alignat*}{1}
\int_{\mathbb{R}^{d}}\frac{\mathcal{A}G_{\alpha}^{2}u}{\alpha G_{\alpha}u+\varepsilon}d\mu & \geq\int_{\mathbb{R}^{d}}\frac{\mathcal{A}G_{\alpha}^{2}u}{\alpha^{2}G_{\alpha}^{2}u+\varepsilon}d\mu=-\frac{1}{\alpha^{2}}\left(-\int_{\mathbb{R}^{d}}\frac{\mathcal{A}G_{\alpha}^{2}u}{G_{\alpha}^{2}u+\frac{\varepsilon}{\alpha^{2}}}d\mu\right)\\
 & \geq-\frac{1}{\alpha^{2}}\underbrace{\inf_{u\in\mathcal{D}_{+}\left(\mathcal{A}\right),\varepsilon>0}\int_{\mathbb{R}^{d}}\frac{\mathcal{A}u}{u+\varepsilon}d\mu}_{I\left(\mu\right)}.
\end{alignat*}
From this, given $\lim_{\alpha\rightarrow\infty}\alpha G_{\alpha}u\left(x\right)=u\left(x\right)$,
noting that $\lim_{\alpha\rightarrow\infty}\phi\left(\alpha\right)=0$,
we have 
\[
-\phi\left(\alpha\right)=\int_{\alpha}^{\infty}\phi^{\prime}\left(\beta\right)d\beta\geq-\int_{\alpha}^{\infty}\frac{1}{\beta^{2}}I\left(\mu\right)d\beta=-\frac{1}{\alpha}I\left(\mu\right)
\]
and 
\[
\phi\left(\infty\right)-\phi\left(\alpha\right)=\int_{\mathbb{R}^{d}}\log\left(\frac{\alpha G_{\alpha}u+\varepsilon}{u+\varepsilon}\right)d\mu\geq\frac{1}{\alpha}\inf_{u\in\mathcal{D}_{+}\left(\mathcal{A}\right),\varepsilon>0}\int_{\mathbb{R}^{d}}\frac{\mathcal{A}u}{u+\varepsilon}d\mu,
\]
to show 
\[
-\inf_{u\in\mathcal{D}_{+}\left(\mathcal{A}\right),\varepsilon>0}\int_{\mathbb{R}^{d}}\log\left(\frac{\alpha G_{\alpha}u+\varepsilon}{u+\varepsilon}\right)d\mu\leq\underbrace{-\frac{1}{\alpha}\inf_{u\in\mathcal{D}_{+}\left(\mathcal{A}\right),\varepsilon>0}\int_{\mathbb{R}^{d}}\frac{\mathcal{A}u}{u+\varepsilon}d\mu}_{\frac{I\left(\mu\right)}{\alpha}}.
\]

Next, we show 
\[
\inf_{u\in\mathcal{D}_{+}\left(\mathcal{A}\right)}\int_{\mathbb{R}^{d}}\log\left(\frac{\alpha G_{\alpha}u+\varepsilon}{u+\varepsilon}\right)d\mu=\inf_{u\in b\mathcal{B}_{+}\left(\mathbb{R}^{d}\right)}\int_{\mathbb{R}^{d}}\log\left(\frac{\alpha G_{\alpha}u+\varepsilon}{u+\varepsilon}\right)d\mu.
\]
For $g\in bC_{+}\left(\mathbb{R}^{d}\right)$, we have $\left\Vert \beta G_{\beta}f\right\Vert _{\infty}\leq\left\Vert f\right\Vert _{\infty}$,
\[
0\leq\beta G_{\beta}f\left(x\right)\rightarrow f\left(x\right),\left(\beta\rightarrow\infty\right)
\]
thus 
\begin{equation}
\int_{\mathbb{R}^{d}}\log\left(\frac{\alpha G_{\alpha}\left(\beta G_{\beta}f\right)+\varepsilon}{\beta G_{\beta}f+\varepsilon}\right)d\mu\overset{\beta\rightarrow\infty}{\rightarrow}\int_{\mathbb{R}^{d}}\log\left(\frac{\alpha G_{\alpha}f+\varepsilon}{f+\varepsilon}\right)d\mu\label{eq:7.1.8}
\end{equation}
holds. We define the measure, $\mu_{\alpha}$, as 
\[
\mu_{\alpha}\left(A\right)=\int_{\mathbb{R}^{d}}\alpha G_{\alpha}\left(x,A\right)d\mu\left(x\right),\ A\in\mathcal{B}\left(\mathbb{R}^{d}\right).
\]
For $v\in b\mathcal{B}_{+}\left(\mathbb{R}^{d}\right)$, we consider
a sequence of functions, $\left\{ g_{n}\right\} _{n=1}^{\infty}\subset bC_{+}\left(\mathbb{R}^{d}\right)\cap L^{2}\left(\mathbb{R}^{d};\mathsf{m}\right)$,
that satisfies, 
\[
\int_{\mathbb{R}^{d}}\left|v-g_{n}\right|d\left(\mu_{\alpha}+\mu\right)\rightarrow0,\ n\rightarrow\infty.
\]
Then, when $n\rightarrow\infty$, we have 
\[
\int_{\mathbb{R}^{d}}\left|\alpha G_{\alpha}v-\alpha G_{\alpha}g_{n}\right|d\mu\leq\int_{\mathbb{R}^{d}}\alpha G_{\alpha}\left(\left|v-g_{n}\right|\right)d\mu=\int_{\mathbb{R}^{d}}\left|v-g_{n}\right|d\mu_{\alpha}\rightarrow0,
\]
thus the following holds: 
\begin{equation}
\int_{\mathbb{R}^{d}}\log\left(\frac{\alpha G_{\alpha}g_{n}+\varepsilon}{g_{n}+\varepsilon}\right)d\mu\overset{n\rightarrow\infty}{\rightarrow}\int_{\mathbb{R}^{d}}\log\left(\frac{\alpha G_{\alpha}v+\varepsilon}{v+\varepsilon}\right)d\mu.\label{eq:7.1.9}
\end{equation}
From eq.~\eqref{eq:7.1.8} and eq.~\eqref{eq:7.1.9}, we have 
\[
\inf_{u\in\mathcal{D}_{+}\left(\mathcal{A}\right)}\int_{\mathbb{R}^{d}}\log\left(\frac{\alpha G_{\alpha}u+\varepsilon}{u+\varepsilon}\right)d\mu=\inf_{u\in b\mathcal{B}_{+}\left(\mathbb{R}^{d}\right)}\int_{\mathbb{R}^{d}}\log\left(\frac{\alpha G_{\alpha}u+\varepsilon}{u+\varepsilon}\right)d\mu.
\]
\end{proof} \end{lemma}

\begin{lemma} \label{lem:7.1.3} If $I\left(\mu\right)<\infty$,
then $\mu\ll m$. \end{lemma}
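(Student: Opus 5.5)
The plan is to argue by contradiction through the resolvent functional $I_{\alpha}$, using Lemma~\ref{lem:,7.1.2} to move from $I$ to $I_{\alpha}$. Suppose $I\left(\mu\right)<\infty$ but $\mu$ is not absolutely continuous with respect to $\mathsf{m}$. Then there is a Borel set $A\subset\mathbb{R}^{d}$ with $\mathsf{m}\left(A\right)=0$ and $\mu\left(A\right)>0$. Fix any $\alpha>0$. By Lemma~\ref{lem:,7.1.2},
\[
I_{\alpha}\left(\mu\right)\leq\frac{I\left(\mu\right)}{\alpha}<\infty,
\]
so it suffices to show that $I_{\alpha}\left(\mu\right)=\infty$.

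For this, take the test function $u=\mathbf{1}_{A}\in b\mathcal{B}_{+}\left(\mathbb{R}^{d}\right)$ in the infimum defining $I_{\alpha}$. The key step is that $\alpha G_{\alpha}\mathbf{1}_{A}\equiv0$ everywhere. Indeed,
\[
\alpha G_{\alpha}\mathbf{1}_{A}\left(x\right)=\alpha\int_{0}^{\infty}e^{-\alpha t}\int_{A}p_{t}\left(y|x\right)dy\,dt,
\]
and each $p_{t}\left(\cdot|x\right)$ is a density with respect to $\mathsf{m}$. Hence the inner integral vanishes for every $t>0$ and every $x$.

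With this, the integrand $\log\bigl((\alpha G_{\alpha}u+\varepsilon)/(u+\varepsilon)\bigr)$ takes only two values: it equals $\log\bigl(\varepsilon/(1+\varepsilon)\bigr)$ on $A$ and $\log(\varepsilon/\varepsilon)=0$ off $A$. Therefore
\[
\int_{\mathbb{R}^{d}}\log\left(\frac{\alpha G_{\alpha}u+\varepsilon}{u+\varepsilon}\right)d\mu=\mu\left(A\right)\log\frac{\varepsilon}{1+\varepsilon}\longrightarrow-\infty\qquad(\varepsilon\downarrow0).
\]
Since $\mu(A)>0$, the infimum over $u$ and $\varepsilon$ is $-\infty$, so $I_{\alpha}\left(\mu\right)=\infty$. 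This contradicts $I_{\alpha}(\mu)<\infty$, and hence $\mu\ll\mathsf{m}$.

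The only delicate point, and the step I expect to need justification, is that the transition kernel $p_{t}$ is absolutely continuous for \emph{every} $t>0$, not merely at $t=1$ where $p_{1}=\hat{p}^{\pi_{U}}$. This is what makes the resolvent kernel $G_{\alpha}\left(x,\cdot\right)$ absolutely continuous. I would obtain it from the construction in Appendix Section~\ref{subsec:Markov-Bayes}: $p_{t}$ is the law of $\mathrm{ID}\left(2tA,t\tilde{\nu},0\right)$. If $A\neq0$, this law is a Gaussian convolution and hence has a density. If $A=0$, I would impose the standard condition on $\tilde{\nu}$ guaranteeing absolute continuity of the symmetrized Lévy process at all times. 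One such condition is integrability of $e^{-t\psi}$, where $\psi$ is the characteristic exponent of $\mathrm{ID}(2A,\tilde{\nu},0)$ (the real exponent of the symmetric semigroup); this condition holds in all the examples treated in Section~\ref{sec:cauchy}. Should this fail for some $t$, the fallback is to note that the argument only needs $G_{\alpha}\left(x,A\right)=0$ for $\mu$-a.e.\ $x$, which follows from absolute continuity of $p_{t}$ for Lebesgue-a.e.\ $t$.
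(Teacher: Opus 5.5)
The paper states Lemma~\ref{lem:7.1.3} without proof, so there is no argument of its own to compare with. What you give is the standard proof from the Donsker--Varadhan/Dirichlet-form literature that this appendix is adapted from, and it is correct:
\begin{itemize}
\item test $I_\alpha$ with $u=\mathbf{1}_A$ for an $\mathsf{m}$-null set $A$;
\item use $\alpha G_\alpha\mathbf{1}_A=0$ to get $I_\alpha(\mu)\ge\mu(A)\log\frac{1+\varepsilon}{\varepsilon}$ for every $\varepsilon>0$;
\item contradict $I_\alpha(\mu)\le I(\mu)/\alpha$.
\end{itemize}
Since $\mathbf{1}_A\notin\mathcal{D}_+(\mathcal{A})$, you are relying on the second half of Lemma~\ref{lem:,7.1.2}, which extends the infimum to $b\mathcal{B}_+(\mathbb{R}^d)$. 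That is legitimate and not circular, because its approximation argument in $L^1(\mu+\mu_\alpha)$ never uses $\mu\ll\mathsf{m}$. The one real hypothesis is the one you isolate: $G_\alpha(x,\cdot)\ll\mathsf{m}$ for $\mu$-a.e.\ $x$. The paper tacitly assumes this by treating $p_t(y|x)$ as a Lebesgue density for every $t$.

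Your remarks on how to verify that hypothesis are off in places, and one of them matters.
\begin{itemize}
\item In $d\ge2$, a nonzero Gaussian part (your second use of the letter $A$) yields a density only if it is positive definite.
\item Integrability of $e^{-t\psi}$ for all $t>0$ does \emph{not} hold in all the examples of Section~\ref{sec:cauchy}. In the Laplace and variance-gamma cases $e^{-t\psi(\xi)}=(1+c^2\xi^2)^{-kt}$, which is not integrable when $kt\le 1/2$. Absolute continuity still holds there, because the time-$t$ law is a difference of two independent gamma variables.
\item In the Poisson/Skellam case, $p_t$ is supported on a lattice and has no Lebesgue density at any $t$. There the lemma is actually false. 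The generator $\mathcal{A}u(x)=\lambda\{u(x+1)+u(x-1)-2u(x)\}$ is bounded, so $\mathcal{A}u\ge-2\lambda u$ for $u\ge0$. Hence $I(\mu)\le2\lambda$ for every probability measure, including $\delta_0$. So the absolute-continuity hypothesis is indispensable, not a technicality.
\item Your fallback to absolute continuity for Lebesgue-a.e.\ $t$ buys nothing for a convolution semigroup. Since $p_s=p_t*p_{s-t}$, absolute continuity propagates forward in time, so failure at one $t_1$ forces failure on all of $(0,t_1]$.
\end{itemize}
The clean formulation is simply to assume that the resolvent kernel is absolutely continuous, which is exactly what your argument uses.
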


If we take a non-negative function that is monotonically increasing,
$f_{n}\in C_{0}\left(\mathbb{R}^{d}\right)$, that point-wise converges
to $f\in bC_{+}\left(\mathbb{R}^{d}\right)$, we have, 
\[
\int_{\mathbb{R}^{d}}\frac{f-\alpha G_{\alpha}f}{G_{\alpha}f+\varepsilon}d\mu=\lim_{n\rightarrow\infty}\int_{\mathbb{R}^{d}}\frac{f_{n}-\alpha G_{\alpha}f_{n}}{G_{\alpha}f_{n}+\varepsilon}d\mu,\ \alpha>0.
\]
Therefore, defining the function, $\mathcal{D}_{+}\left(\hat{\mathcal{A}}\right)$,
as 
\[
\mathcal{D}_{+}\left(\hat{\mathcal{A}}\right)=\left\{ G_{\alpha}f\left|\alpha>0,\ f\in bC_{+}\left(\mathbb{R}^{d}\right),\ f\cancel{\equiv}0\right.\right\} ,
\]
if we set $\hat{\mathcal{A}}\phi=\alpha G_{\alpha}f-f$ with regard
to $\phi=G_{\alpha}f\in\mathcal{D}_{+}\left(\hat{\mathcal{A}}\right)$,
we have the following result. \begin{corollary} For $f\in\mathcal{F}$,
\[
\mathcal{E}\left(f,f\right)=\sup_{u\in\mathcal{D}_{+}\left(\hat{\mathcal{A}}\right),\varepsilon>0}\int_{\mathbb{R}^{d}}\frac{-\hat{\mathcal{A}}u}{u+\varepsilon}f^{2}d\mathsf{m}.
\]
\end{corollary}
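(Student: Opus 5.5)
We prove the identity by showing the two inequalities separately. The upper bound holds for every test function $u$ by a pointwise symmetrization (AM--GM) argument. The lower bound comes from choosing $u$ to be a resolvent regularization of $f$ itself.

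Throughout we use two facts. First, for $u=G_\alpha g\in\mathcal{D}_+(\hat{\mathcal{A}})$ the resolvent equation $(\alpha-\mathcal{A})G_\alpha g=g$ gives $\hat{\mathcal{A}}u=\alpha G_\alpha g-g=\mathcal{A}u$, and this function is bounded. Second, since $p_t$ is a conservative symmetric Lévy kernel, constants are annihilated by $\mathcal{A}$. Hence $\mathcal{A}(u+\varepsilon)=\mathcal{A}u$.

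\emph{Step 1 (upper bound $\le\mathcal{E}(f,f)$).} Fix $u\in\mathcal{D}_+(\hat{\mathcal{A}})$, $\varepsilon>0$, and set $w=u+\varepsilon\ge\varepsilon$. Because $(T_tu-u)/t\to\mathcal{A}u$ boundedly and $f^2/w\le f^2/\varepsilon\in L^1(\mathsf m)$, dominated convergence gives
\[
\int\frac{-\mathcal{A}u}{w}f^2d\mathsf m=\lim_{t\downarrow0}\frac1t\Bigl(\int f^2d\mathsf m-\iint p_t(y|x)\frac{w(y)}{w(x)}f(x)^2\,dy\,dx\Bigr).
\]
Symmetry $p_t(y|x)=p_t(x|y)$ lets us replace the double integral by
\[
\tfrac12\iint p_t\Bigl[\tfrac{w(y)}{w(x)}f(x)^2+\tfrac{w(x)}{w(y)}f(y)^2\Bigr]\,dy\,dx .
\]
By AM--GM the bracket is at least $2|f(x)f(y)|$, so the double integral is at least $(T_t|f|,|f|)$. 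The right-hand side is therefore bounded by $\frac1t(|f|-T_t|f|,|f|)$. This increases to $\mathcal{E}(|f|,|f|)\le\mathcal{E}(f,f)$, the last step because $|\cdot|$ is a normal contraction. Taking the supremum over $u$ and $\varepsilon$ gives the upper bound.

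\emph{Step 2 (lower bound).} It suffices to treat $f\ge0$, since $f$ and $|f|$ give the same integrand $f^2$ and, by Step 1 and contraction, $\mathcal{E}(|f|,|f|)\le\mathcal{E}(f,f)$. We reduce further to $f$ bounded and continuous. Truncations $f\wedge n$ converge to $f$ in $\mathcal{E}_1$-norm. The same truncations, in the regular Dirichlet space, can be approximated by $C_0$ functions; this is the monotone approximation $f_n\uparrow f$ noted just before the corollary.

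For such $f$ take $g=\beta f$ and $u=G_\beta(\beta f)=\beta G_\beta f\in\mathcal{D}_+(\hat{\mathcal{A}})$, so that $-\hat{\mathcal{A}}u=\beta(f-\beta G_\beta f)$. Then
\[
\int\frac{-\hat{\mathcal{A}}u}{u+\varepsilon}f^2d\mathsf m=\beta\int(f-\beta G_\beta f)\,\frac{f^2}{\beta G_\beta f+\varepsilon}\,d\mathsf m .
\]
We let $\varepsilon\downarrow0$ and then $\beta\to\infty$. Since $\beta G_\beta f\to f$, the ratio $f^2/(\beta G_\beta f+\varepsilon)$ tends to $f$. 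The target is the approximating form $\mathcal{E}^{(\beta)}(f,f)=\beta(f-\beta G_\beta f,f)\uparrow\mathcal{E}(f,f)$.

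\emph{Main obstacle.} The difficulty is the interchange of limits in Step 2. The factor $f-\beta G_\beta f$ has no sign, so monotone convergence does not apply directly. The ratio $f^2/(\beta G_\beta f+\varepsilon)$ is not uniformly bounded where $\beta G_\beta f$ is small.

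My first attempt would avoid the interchange by applying the symmetrization of Step 1 in reverse. Writing $v=\beta G_\beta f+\varepsilon$, the quantity equals
\[
\tfrac{\beta}{2}\iint \beta G_\beta(x,dy)\,\bigl(f(x)-f(y)\bigr)\Bigl(\tfrac{f(x)^2}{v(x)}-\tfrac{f(y)^2}{v(y)}\Bigr)dx .
\]
For $v$ close to $f$ the integrand is close to $(f(x)-f(y))^2$, whose integral is $\mathcal{E}^{(\beta)}(f,f)$. I would control the error on the set $\{f\ge\delta\}$ by uniform continuity. On its complement the contribution of $f^2/v\le f^2/\varepsilon$ can be made small by sending $\varepsilon\to0$ along with $\delta$ after fixing $\beta$.

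If this fails, the fallback is the route already prepared in the paper. Lemma~\ref{lem:,7.1.2} gives $I_\alpha\le I/\alpha$, and Lemma~\ref{lem:7.1.3} gives $\mu=f^2\mathsf m\ll\mathsf m$. One then identifies $\alpha I_\alpha(\mu)$ with a resolvent Dirichlet-type quantity converging to $\mathcal{E}(f,f)$, which would yield the lower bound.
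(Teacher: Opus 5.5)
Your Step~1 is correct. It is more elementary than the paper's Feynman--Kac $h$-transform argument (the semigroup $P_t^{\phi}$ in the proof of Theorem~\ref{prop:rate variational}), and it proves the sharper bound $\sup_{u,\varepsilon}\int\frac{-\hat{\mathcal{A}}u}{u+\varepsilon}f^{2}\,d\mathsf{m}\le\mathcal{E}(|f|,|f|)$. That sharper bound is why the first reduction in Step~2 fails. A lower bound for $|f|$ together with $\mathcal{E}(|f|,|f|)\le\mathcal{E}(f,f)$ gives nothing, because the inequality points the wrong way. Worse, no argument can prove the statement for signed $f$ in this paper's non-local setting. We have $(a-b)^2-(|a|-|b|)^2=4(ab)^{-}$ (negative part), and the Gaussian part of $\mathcal{E}$ is unchanged by $f\mapsto|f|$. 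So the jump part $\frac12\iint(f(x+h)-f(x))^2\,\tilde\nu(dh)\,dx$ gives $\mathcal{E}(f,f)-\mathcal{E}(|f|,|f|)=2\iint(f(x)f(x+h))^{-}\,\tilde\nu(dh)\,dx$. This is strictly positive, for instance, for the one-dimensional Cauchy kernel and any $f\in C_c^{\infty}(\mathbb{R})$ taking both signs. By your own Step~1, the right-hand side is then strictly smaller than $\mathcal{E}(f,f)$. The corollary must therefore be read for $f\ge0$ (equivalently, with $\mathcal{E}(|f|,|f|)$ on the left). This is the only case relevant to Theorem~\ref{prop:rate variational}, where the function is $\sqrt{d\mu/d\mathsf{m}}$.

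For $f\ge0$, Step~2 only sketches two tentative routes, and its reduction to bounded continuous $f$ is also invalid. Approximating $f$ by $f_k$ in $\mathcal{E}_1$ does not transfer a lower bound. The right-hand side is a supremum of functionals $f\mapsto\int m_uf^2\,d\mathsf{m}$ with bounded multipliers $m_u$, hence only lower semicontinuous, and the multipliers that work for $f_k$ have sup-norms you do not control. Also, the remark before the corollary approximates the input of $G_\alpha$ inside the test function, not the function $f\in\mathcal{F}$.

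The repair is to keep $f$ fixed and approximate only the test function. At fixed $\beta,\varepsilon$, the map $g\mapsto\int\frac{g-\beta G_\beta g}{G_\beta g+\varepsilon}f^2\,d\mathsf{m}$ is continuous along uniformly bounded $g_k\to g$ in $L^1((f^2+G_\beta f^2)\,\mathsf{m})$. This uses $\int G_\beta|g_k-g|\,f^2\,d\mathsf{m}=\int|g_k-g|\,G_\beta f^2\,d\mathsf{m}$, the same device as in the proof of Lemma~\ref{lem:,7.1.2}. So you may take $u=\beta G_\beta f_n$ with $f_n=f\wedge n$. Set $v=\beta G_\beta f_n\le n$. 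On $\{v>0\}$ the pointwise inequality $(f_n-v)\frac{f^2}{v}\ge(f_n-v)f_n$ holds: on $\{f\le n\}$ the difference is $f(f-v)^2/v$, and on $\{f>n\}$ both factors $n-v$ and $f^2/v-n$ are nonnegative. The integrand is nonnegative on $\{v=0\}$, and its negative part is dominated by $\beta vf\in L^1$. Fatou then gives $\liminf_{\varepsilon\downarrow0}\int\frac{-\hat{\mathcal{A}}u}{u+\varepsilon}f^2\,d\mathsf{m}\ge\beta\langle f_n-\beta G_\beta f_n,f_n\rangle_{\mathsf{m}}$. Letting $n\to\infty$ and then $\beta\to\infty$ yields $\mathcal{E}(f,f)$. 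Your ``main obstacle'' thus disappears without any uniform-continuity argument. The paper gets the same lower bound differently: it inserts the rough function $\sqrt{f}\wedge n$ into the logarithmic resolvent functional $I_\alpha$, uses $\log(1-x)\le-x$, and passes to the generator functional through $I_\alpha\le I/\alpha$.
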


Proof of Theorem \ref{prop:rate variational}. \begin{proof} We first
show $I\left(\mu\right)\geq I_{\mathcal{E}}\left(\mu\right)$. Assume
$I\left(\mu\right)<\infty$. Since, $\mu\ll m$, we have, $f=\frac{d\mu}{d\mathsf{m}}$
and let$f^{n}=\sqrt{f}\wedge n$. From $\log\left(1-x\right)\leq-x$
and $-\infty<\frac{f^{n}-\alpha G_{\alpha}f^{n}}{f^{n}+\varepsilon}<1,\ \left(-\infty<x<1\right)$,
we have 
\[
\int_{\mathbb{R}^{d}}\log\left(\frac{\alpha G_{\alpha}f^{n}+\varepsilon}{f^{n}+\varepsilon}\right)fd\mathsf{m}=\int_{\mathbb{R}^{d}}\log\left(1-\frac{f^{n}-\alpha G_{\alpha}f^{n}}{f^{n}+\varepsilon}\right)fd\mathsf{m}\leq-\int_{\mathbb{R}^{d}}\frac{f^{n}-\alpha G_{\alpha}f^{n}}{f^{n}+\varepsilon}fd\mathsf{m}.
\]
Thus, 
\begin{alignat}{1}
\int_{\mathbb{R}^{d}}\frac{f^{n}-\alpha G_{\alpha}f^{n}}{f^{n}+\varepsilon}fd\mathsf{m} & \leq-\int_{\mathbb{R}^{d}}\log\left(\frac{\alpha G_{\alpha}f^{n}+\varepsilon}{f^{n}+\varepsilon}\right)fd\mathsf{m}\label{eq:7.1.11}\\
 & \leq\underbrace{-\inf_{u\in b\mathcal{B}_{+}\left(\mathbb{R}^{d}\right),\varepsilon>0}\int_{\mathbb{R}^{d}}\log\left(\frac{\alpha G_{\alpha}u+\varepsilon}{u+\varepsilon}\right)d\mu}_{I_{\alpha}\left(f\cdot\mathsf{m}\right)}.\nonumber 
\end{alignat}
For the equality, 
\begin{alignat*}{1}
\frac{f^{n}-\alpha G_{\alpha}f^{n}}{f^{n}+\varepsilon}f & =\frac{f^{n}-\alpha G_{\alpha}f^{n}}{f^{n}+\varepsilon}f1_{\left\{ \sqrt{f}\leq n\right\} }+\frac{f^{n}-\alpha G_{\alpha}f^{n}}{f^{n}+\varepsilon}f1_{\left\{ \sqrt{f}>n\right\} }\\
 & =\underbrace{\frac{\sqrt{f}-\alpha G_{\alpha}f^{n}}{\sqrt{f}+\varepsilon}f1_{\left\{ \sqrt{f}\leq n\right\} }}_{\left(a\right)}+\underbrace{\frac{n-\alpha G_{\alpha}f^{n}}{n+\varepsilon}f\left\{ \sqrt{f}>n\right\} }_{\left(b\right)},
\end{alignat*}
the absolute value of $\left(a\right)$ and $\left(b\right)$ is evaluated
from above by $\left(\sqrt{f}+\alpha G_{\alpha}f^{n}\right)\sqrt{f}\in L^{1}\left(\mathbb{R}^{d};\mathsf{m}\right)$
and $\frac{n}{n+\varepsilon}f<f\in L^{1}\left(\mathbb{R}^{d};\mathsf{m}\right)$,
respectively. Therefore, from the bounded convergence theorem, we
have 
\[
\lim_{n\rightarrow\infty}\int\frac{f^{n}-\alpha G_{\alpha}f^{n}}{f^{n}+\varepsilon}fd\mathsf{m}=\int_{\mathbb{R}^{d}}\frac{\sqrt{f}-\alpha G_{\alpha}\sqrt{f}}{\sqrt{f}+\varepsilon}fd\mathsf{m}.
\]
When $\varepsilon\rightarrow0$, from eq.~\eqref{eq:7.1.11}, we
have 
\[
\int_{\mathbb{R}^{d}}\sqrt{f}\left(\sqrt{f}-\alpha G_{\alpha}\sqrt{f}\right)d\mathsf{m}\leq I_{\alpha}\left(f\cdot\mathsf{m}\right).
\]
Therefore, from Lemma~\ref{lem:,7.1.2}, we can shown 
\[
\alpha\left\langle \sqrt{f},\sqrt{f}-\alpha G_{\alpha}\sqrt{f}\right\rangle _{\mathsf{m}}\leq I\left(f\cdot\mathsf{m}\right)<\infty,
\]
and from Lemma~\ref{lem:4.3}, this implies $\sqrt{f}\in\mathcal{F}$
and $\mathcal{E}\left(\sqrt{f},\sqrt{f}\right)\leq I\left(f\cdot\mathsf{m}\right)$.

Next, we show $I\left(\mu\right)\leq I_{\mathcal{E}}\left(\mu\right)$.
Let $\phi\in\mathcal{D}_{+}\left(\mathcal{A}\right)$ and define the
semigroup, $P_{t}^{\phi}$, as 
\[
P_{t}^{\phi}f\left(x\right)=\mathbb{E}_{x}\left[\left(\frac{\phi\left(X_{t}\right)+\varepsilon}{\phi\left(X_{0}\right)+\varepsilon}\right)\exp\left(-\int_{0}^{t}\frac{\mathcal{A}\phi}{\phi+\varepsilon}\left(X_{s}\right)ds\right)f\left(X_{t}\right)\right].
\]
Here, $P_{t}^{\phi}$ is $\left(\phi+\varepsilon\right)^{2}m$-symmetric
and satisfies $P_{t}^{\phi}1\leq1$. For the probability measure,
$\mu=fm\in\mathcal{P}$, such that $\sqrt{f}\in\mathcal{F}$, let
\[
S_{t}^{\phi}\sqrt{f}\left(x\right)=\mathbb{E}_{x}\left[\exp\left(-\int_{0}^{t}\frac{\mathcal{A}\phi}{\phi+\varepsilon}\left(X_{s}\right)ds\right)\sqrt{f}\left(X_{t}\right)\right].
\]
Then, the following holds: 
\begin{alignat*}{1}
\int_{\mathbb{R}^{d}}\left(S_{t}^{\phi}\sqrt{f}\right)^{2}d\mathsf{m} & =\int_{\mathbb{R}^{d}}\left(\phi+\varepsilon\right)^{2}\left(P_{t}^{\phi}\left(\frac{\sqrt{f}}{\phi+\varepsilon}\right)\right)^{2}d\mathsf{m}\\
 & \leq\int_{\mathbb{R}^{d}}\left(\phi+\varepsilon\right)^{2}P_{t}^{\phi}\left(\left(\frac{\sqrt{f}}{\phi+\varepsilon}\right)^{2}\right)d\mathsf{m}\\
 & \leq\int_{\mathbb{R}^{d}}\left(\phi+\varepsilon\right)^{2}\left(\frac{\sqrt{f}}{\phi+\varepsilon}\right)^{2}d\mathsf{m}\\
 & =\int_{\mathbb{R}^{d}}fd\mathsf{m}.
\end{alignat*}
Therefore, we have 
\[
0\leq\lim_{t\rightarrow0}\frac{1}{t}\left\langle \sqrt{f}-S_{t}^{\phi}\sqrt{f},\sqrt{f}\right\rangle _{\mathsf{m}}=\mathcal{E}\left(\sqrt{f},\sqrt{f}\right)+\int_{\mathbb{R}^{d}}\frac{\mathcal{A}\phi}{\phi+\varepsilon}fd\mathsf{m}
\]
and $\mathcal{E}\left(\sqrt{f},\sqrt{f}\right)\geq I\left(f\cdot\mathsf{m}\right)$
has been shown. \end{proof}

\section{Proof of Corollaries \ref{cor:Cauchy} and \ref{cor:Stable}}\label{subsec:Sufficient-conditions}



\begin{lemma} \label{lem:Diri<prior} Let $\left(\mathcal{E},\mathcal{F}\right)$
be the Dirichlet form that follows a Markov process with transition
probability, $\hat{p}_{t}^{\pi_{U}}$. Then, we have $\sqrt{M^{\pi}\left(\cdot;A,\nu,\gamma\right)}\in\mathcal{F}$,
and for $\mathcal{E}\left(\sqrt{M^{\pi}\left(\cdot;A,\nu,\gamma\right)},\sqrt{M^{\pi}\left(\cdot;A,\nu,\gamma\right)}\right)$
we have the following inequality, 
\[
\mathcal{E}\left(\sqrt{M^{\pi}\left(\cdot;A,\nu,\gamma\right)},\sqrt{M^{\pi}\left(\cdot;A,\nu,\gamma\right)}\right)\leq\mathcal{E}\left(\sqrt{\pi},\sqrt{\pi}\right).
\]
\end{lemma}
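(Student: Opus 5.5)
The plan is to use the semigroup (approximating-form) description of the Dirichlet form, which treats the Gaussian and jump parts uniformly. Let $\{T_t\}$ be the semigroup of the symmetric Lévy process with transition density $p_t(y|x)=p_t(y-x)$, generated by $\mathrm{ID}(2tA,t\tilde\nu,0)$ (Appendix~\ref{subsec:Markov-Bayes}). Since the process is conservative ($T_t1=1$) and $\mathsf m$-symmetric, for every $u\in L^2(\mathbb R^d;\mathsf m)$ we have
\[
\mathcal E^{(t)}(u,u):=\frac1t\langle u-T_tu,u\rangle_{\mathsf m}=\frac{1}{2t}\int\!\!\int\bigl(u(x+z)-u(x)\bigr)^2p_t(z)\,dz\,dx .
\]
By \citep[Lemma~1.3.4]{Fukushima-Oshima-Takeda_10}, $u\in\mathcal F$ if and only if $\sup_{t>0}\mathcal E^{(t)}(u,u)<\infty$, and then $\mathcal E(u,u)=\lim_{t\downarrow0}\mathcal E^{(t)}(u,u)$. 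It therefore suffices to prove, for each fixed $t>0$,
\[
\mathcal E^{(t)}\bigl(\sqrt{M^\pi},\sqrt{M^\pi}\bigr)\le\mathcal E^{(t)}\bigl(\sqrt{\pi},\sqrt{\pi}\bigr),
\]
and then let $t\downarrow0$. This gives both the membership $\sqrt{M^\pi}\in\mathcal F$ and the inequality, provided $\sqrt\pi\in\mathcal F$; if $\mathcal E(\sqrt\pi,\sqrt\pi)=\infty$ the inequality is trivial. We take $\pi$ to be a proper (finite) prior, as in Blyth's method, so that $\sqrt\pi\in L^2$. Since $\int M^\pi\,d\mathsf m=\int\pi\,d\mathsf m$, this also gives $\sqrt{M^\pi}\in L^2$.

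The key pointwise estimate comes from the convolution structure. Write $p(x|\theta)=f(x-\theta)$, so that $M^\pi(x)=\int f(\eta)\,\pi(x-\eta)\,d\eta$. Then
\[
\sqrt{M^\pi(x)}=\bigl\|\sqrt{\pi(x-\cdot)}\bigr\|_{L^2(f(\eta)d\eta)},
\]
which exhibits $\sqrt{M^\pi}$ as an $L^2(f)$-norm of translates of $\sqrt\pi$. Minkowski's (reverse triangle) inequality in $L^2(f\,d\eta)$ gives
\[
\bigl(\sqrt{M^\pi(x+z)}-\sqrt{M^\pi(x)}\bigr)^2\le\int f(\eta)\bigl(\sqrt{\pi(x+z-\eta)}-\sqrt{\pi(x-\eta)}\bigr)^2d\eta .
\]
Multiply by $p_t(z)/(2t)$ and integrate over $x$ and $z$. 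Everything is nonnegative, so Tonelli applies; performing the $x$-integral first and using translation invariance of $\mathsf m$ (substitute $x\mapsto x+\eta$) turns the right-hand side into $\int f(\eta)\,d\eta\cdot\mathcal E^{(t)}(\sqrt\pi,\sqrt\pi)=\mathcal E^{(t)}(\sqrt\pi,\sqrt\pi)$, because $f$ is a probability density. This is exactly the desired bound for each $t$.

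Finally, letting $t\downarrow0$ concludes via the characterization of $\mathcal F$ above. The main obstacle is bookkeeping rather than analysis: the identification $\mathcal E^{(t)}(u,u)=\frac{1}{2t}\iint(u(x+z)-u(x))^2p_t(z)\,dz\,dx$ requires conservativeness and symmetry of $p_t$, both of which hold for the symmetrized triplet. One also needs $\pi$ to be finite (or at least $\sqrt\pi\in L^2$) so that the forms are defined; for improper $\pi$ I would instead work with the extended Dirichlet space, where the same pointwise Minkowski argument still applies. Note that no direct gradient computation for the Gaussian part is needed, since the semigroup form covers the Gaussian and jump parts simultaneously.
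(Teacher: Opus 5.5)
Your proof is correct, and it takes a genuinely different route from the paper's. The paper argues spectrally. It integrates $\lambda e^{-t\lambda}\le\frac1t(1-e^{-t\lambda})\le\lambda$ against $d\langle E_\lambda\sqrt\pi,\sqrt\pi\rangle$, reads the left end as $\mathcal{E}(\sqrt{T_t\pi},\sqrt{T_t\pi})$, and then sets $t=1$ with $\sqrt{T_1\pi}=\sqrt{M^\pi}$. As written this does not go through, for three reasons:
\begin{itemize}
\item Spectral calculus controls $\mathcal{E}(T_{t/2}\sqrt\pi,T_{t/2}\sqrt\pi)$, not $\mathcal{E}(\sqrt{T_t\pi},\sqrt{T_t\pi})$, because the square root does not commute with the semigroup.
\item The middle term $\frac1t\{\langle\sqrt\pi,\sqrt\pi\rangle_{\mathsf m}-\langle\sqrt{T_t\pi},\sqrt{T_t\pi}\rangle_{\mathsf m}\}$ is identically zero, since $\int T_t\pi\,d\mathsf m=\int\pi\,d\mathsf m$.
\item $M^\pi=f*\pi$ is $T_{1/2}\pi$ (not $T_1\pi$) for symmetric noise, and for skewed noise it is not a time slice of the benchmark semigroup at all.
\end{itemize}

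Your argument avoids all three issues. The approximating forms $\mathcal{E}^{(t)}$ together with Lemma~1.3.4 of Fukushima--Oshima--Takeda give membership and the bound at the same time. The nonlinearity $u\mapsto\sqrt u$, which is exactly what the spectral theorem cannot see, is handled by Minkowski's inequality in $L^2(f\,d\eta)$. The argument uses only translation invariance of the benchmark kernel and $\int f\,d\eta=1$. It therefore needs no relation between $f$ and $\{T_t\}$, covers skewed $\nu$ and $\gamma\neq0$, and goes through verbatim with $p_t(z)\,dz$ and $f(\eta)\,d\eta$ replaced by measures (as in the Poisson example, where no density exists). The price is that it relies essentially on convolution structure, so it would not transfer as is to the non-translation-invariant kernels $\hat p^\rho$ of Section~\ref{subsec:A-harmonic}.

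One strengthening comes for free. Exchange the roles of $f$ and $\pi$ and write $\sqrt{M^\pi(x)}=\|\sqrt{f(x-\cdot)}\|_{L^2(\pi\,d\theta)}$. The same computation then gives $\mathcal{E}(\sqrt{M^\pi},\sqrt{M^\pi})\le(\int\pi\,d\mathsf m)\,\mathcal{E}(\sqrt f,\sqrt f)$. This yields $\sqrt{M^\pi}\in\mathcal F$ even when $\mathcal{E}(\sqrt\pi,\sqrt\pi)=\infty$, provided $\sqrt f\in\mathcal F$. Some hypothesis of this kind is needed for the unconditional membership asserted in the statement, which neither argument otherwise establishes.
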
 \begin{proof} See Appendix Section \ref{app:Diri<prior}.
\end{proof}

This lemma provides a guide to constructing a prior distribution that
makes the estimator admissible. From this lemma, to show admissibility,
we need to show that a sequence of proper priors, $\left\{ \pi_{n}\right\} $,
such that $\mathcal{E}\left(\sqrt{\pi_{n}},\sqrt{\pi_{n}}\right)\rightarrow0$,
can be constructed. However, $\left\{ \pi_{n}\right\} $ is a functional
sequence in the $L^{2}\left(\mathbb{R}^{d},\mathsf{m}\right)$ space.
Therefore, it is a functional sequence of a proper prior, and we construct
a functional sequence such that it is a uniform prior when $n\rightarrow\infty$.
When $\left(\mathcal{E},\mathcal{F}\right)$ is recurrent, the functional
sequence can be constructed in $\mathcal{F}$, but cannot when it
is transient.

We now construct such a sequence. Assume that the Dirichlet form,
$\left(\mathcal{E},\mathcal{F}\right)$, of the Markov process, $\left\{ X_{t}\right\} $,
that corresponds to the transition probability, $\hat{p}^{\pi_{U}}$,
and $\hat{p}^{\pi_{U}}$ is recurrent (for details of this correspondence,
see Appendix Section \ref{subsec:Markov-Bayes}). We transform the transition
probability $p_{t}\left(y\left|x\right.\right)$ of Markov process
$\left\{ X_{t}\right\} $ with transition probability, $\hat{p}^{\pi_{U}}$,
as $p_{t}^{\eta}\left(y\left|x\right.\right)$. Choose a function,
$\eta$, that is $\eta\in L^{1}\left(\mathbb{R}^{d};\mathsf{m}\right)\cap L^{\infty}\left(\mathbb{R}^{d};\mathsf{m}\right),\ \eta>0\ \mathsf{m}\textrm{-a.e.}$,
and define it as 
\[
p_{t}^{\eta}\left(y\left|x\right.\right)\equiv\exp\left(-t\eta\left(y\right)\right)p_{t}\left(y\left|x\right.\right).
\]
The corresponding Dirichlet form is 
\[
\mathcal{E}^{\eta}\left(f,g\right)=\mathcal{E}\left(f,g\right)+\left\langle f,g\right\rangle _{\eta\cdot\mathsf{m}},\ f,g\in\mathcal{F},
\]
where 
\[
\left\langle f,g\right\rangle _{\eta\cdot\mathsf{m}}=\int_{\mathbb{R}^{d}}f\left(x\right)g\left(x\right)\eta\left(x\right)\mathsf{m}\left(dx\right).
\]
Here, $p_{t}^{\eta}$ is a transition probability that is generated
by a particle following a Markov process corresponding to $p_{t}$
that has a survival probability $\int_{0}^{t}\exp\left(-s\eta\left(y\right)\right)ds$,
at $t$.

Consider the function $G_{\alpha}^{\eta}\eta\left(x\right)$ which
is written as 
\[
G_{\alpha}^{\eta}\eta\left(x\right)=\int_{0}^{\infty}e^{-\alpha s}\int_{\mathbb{R}^{d}}\exp\left(-s\eta\left(y\right)\right)p_{s}\left(y\left|x\right.\right)\eta\left(y\right)\mathsf{m}\left(dy\right)ds,
\]
(The operator $G_{\alpha}^{\eta}$ is the resolvent of the Markovian
semigroup).
If we consider $G_{\frac{1}{n}}^{\eta}\eta$ as $\sqrt{\pi_{n}}$
this is the desired sequence of proper priors. In other words, we
have the following theorem \citep[which is an extension of][Lemma. 5.4.1.]{Brown_71}.

\begin{theorem} \label{prop:Diri_0}Assume that the Dirichlet form,
$\left(\mathcal{E},\mathcal{F}\right)$, is recurrent. $G_{\frac{1}{n}}^{\eta}\eta$
is a function defined on $\mathbb{R}^{d}$. Then, $G_{\frac{1}{n}}^{\eta}\eta\in\mathcal{F}\subset L^{2}\left(\mathbb{R}^{d};\mathsf{m}\right)$,
and is square-integrable, proper, $0\leq G_{\frac{1}{n}}^{\eta}\eta\left(x\right)\uparrow1,\ \mathsf{m}\textrm{-a.e.}$,
and asymptotically uniform, non-negative. From this, we have, 
\[
\lim_{n\rightarrow\infty}\mathcal{E}\left(G_{\frac{1}{n}}^{\eta}\eta,G_{\frac{1}{n}}^{\eta}\eta\right)=0.
\]
\end{theorem}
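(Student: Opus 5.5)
We work with the killed (perturbed) form $(\mathcal{E}^{\eta},\mathcal{F})$, $\mathcal{E}^{\eta}(f,g)=\mathcal{E}(f,g)+\langle f,g\rangle_{\eta\cdot\mathsf m}$, whose $\alpha$-resolvent is $G^{\eta}_{\alpha}$. Write $u_n=G^{\eta}_{1/n}\eta$. The plan has three parts: (i) show $u_n\in\mathcal F$ with $0\le u_n\uparrow$ some limit $u_\infty\le1$; (ii) identify $u_\infty=1$ $\mathsf m$-a.e. using recurrence; (iii) bound $\mathcal E(u_n,u_n)$ by a quantity that vanishes.

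For (i), since $\eta\in L^1\cap L^\infty$ implies $\eta\in L^2(\mathsf m)$, the standard resolvent theory for the closed form $\mathcal E^{\eta}_{\alpha}=\mathcal E^{\eta}+\alpha\langle\cdot,\cdot\rangle_{\mathsf m}$ gives $u_n\in\mathcal F$. It also gives the defining identity
\[
\mathcal E^{\eta}(u_n,v)+\tfrac1n\langle u_n,v\rangle_{\mathsf m}=\langle \eta,v\rangle_{\mathsf m},\qquad v\in\mathcal F .
\]
Nonnegativity and monotonicity in $n$ follow from the explicit integral formula for $G^\eta_\alpha\eta$ displayed above, since $e^{-s/n}$ increases in $n$. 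The bound $u_n\le1$ follows from the Feynman--Kac representation: $\int_0^\infty \eta(X_s)\exp(-\int_0^s\eta(X_r)dr)\,ds=1-\exp(-\int_0^\infty\eta(X_r)dr)\le1$. Here I would use the multiplicative-functional killing $\exp(-\int_0^t\eta(X_r)dr)$ rather than the literal $e^{-t\eta(y)}$ formula. Hence $u_\infty=G^{\eta}_0\eta=\mathbb E_x[1-e^{-\int_0^\infty\eta(X_r)dr}]$.

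For (ii), recurrence together with irreducibility (Theorem~\ref{thm:characterization_Rec-Transi}) and $\eta>0$ a.e. with $\int\eta\,d\mathsf m>0$ gives $\int_0^\infty\eta(X_r)dr=\infty$ $\mathbb P_x$-a.s. for q.e. $x$. This is the standard characterization of recurrence in Fukushima--Oshima--Takeda, via the additive functional of a positive integrable function. Therefore $u_\infty=1$ a.e.

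For (iii), take $v=u_n$ in the resolvent identity to get
\[
\mathcal E(u_n,u_n)\le \mathcal E^{\eta}(u_n,u_n)\le\langle\eta,u_n\rangle_{\mathsf m}-\langle \eta,u_n^2\rangle_{\mathsf m}=\int\eta\,u_n(1-u_n)\,d\mathsf m .
\]
The integrand is bounded by $\eta\in L^1$ and tends to $\eta\cdot1\cdot0=0$ a.e. by (ii), so dominated convergence yields $\mathcal E(u_n,u_n)\to0$.

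The main obstacle is step (ii): it requires the correct identification of the killed process and the probabilistic characterization of recurrence $\int_0^\infty\eta(X_r)dr=\infty$. I would quote FOT Theorem 1.6.3 / Lemma 1.6.4 on $G_0^{\eta}\eta=1$ for recurrent irreducible forms rather than reprove it. If only the analytic form is available, the fallback is to use the sequence $f_n\to1$ with $\mathcal E(f_n,f_n)\to0$ to show that $1-u_\infty$ is $\mathcal E^\eta$-harmonic with zero energy, hence constant, and then zero because $\eta>0$.
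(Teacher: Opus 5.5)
Your proposal is correct and follows the same argument as the paper. Both use the perturbed resolvent $u_n=G^{\eta}_{1/n}\eta$, read the energy bound off $\mathcal{E}^{\eta}_{1/n}(u_n,v)=\langle\eta,v\rangle_{\mathsf m}$, and get $u_n\uparrow1$ from recurrence.

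The one difference is how you show $G^{\eta}_{0}\eta=1$. You argue probabilistically, via Feynman--Kac and divergence of $\int_0^\infty\eta(X_r)\,dr$; you are right to use the multiplicative-functional killing in place of the displayed $e^{-t\eta(y)}p_t(y|x)$. The paper instead lets $\alpha\downarrow0$ in $G^{\eta}_{\alpha}\eta=G_{\alpha}(\eta-\eta G^{\eta}_{\alpha}\eta)$, which gives $R\bigl(\eta(1-R^{\eta}\eta)\bigr)=R^{\eta}\eta\le1$. It then applies the $0$/$\infty$ dichotomy of $R$ under recurrence, so it stays at the level of the $L^2$ resolvent and never needs the Hunt process.

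One correction in step (iii): the middle term should be $\mathcal{E}_{1/n}(u_n,u_n)=\mathcal{E}(u_n,u_n)+\tfrac1n\|u_n\|^2_{L^2(\mathsf m)}$, not $\mathcal{E}^{\eta}(u_n,u_n)$. The latter is at least $\int\eta u_n^2\,d\mathsf m\to\int\eta\,d\mathsf m>0$, so it cannot be bounded by $\int\eta u_n(1-u_n)\,d\mathsf m\to0$. Your end-to-end bound $\mathcal{E}(u_n,u_n)\le\int\eta u_n(1-u_n)\,d\mathsf m$ is still correct.
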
 \begin{proof} See Appendix Section
\ref{app:Diri_0}. \end{proof} Thus, Corollary~\ref{cor:Cauchy}
is proven.

To show the admissibility of the Cauchy distribution when $d=1$,
we need to show the existence of a sequence of $\hat{p}^{\pi}$ that
converges the Bayes risk difference to zero. This is equivalent to
the existence of a sequence, $M^{\pi}\left(\cdot;A,\nu,\gamma\right)$,
that converges the Dirichlet form to zero. From, Lemma~\ref{lem:Diri<prior},
we only need to consider the prior sequence. Since the Dirichlet form
of the Cauchy process is recurrent when $d=1$,
if we input the functional sequence of a proper prior, $\left\{ G_{\frac{1}{n}}^{\eta}\eta\right\} $,
from Theorem~\ref{prop:Diri_0}, we can show that the Bayes risk
difference converges to zero. This is because the prior sequence is
a measure sequence that approximates the Lebesgue measure by the $L^{2}\left(\mathbb{R}^{d};\mathsf{m}\right)$
function. Therefore, for a Cauchy distribution with $d=1$, the uniform
Bayes predictive distribution, $\hat{p}^{\pi_{U}}$, is admissible
from Blyth's method \citep[][Lemma 1.]{Brown-EdGeorge-Xu_08}.

Under a Gaussian distribution, $G_{\frac{1}{n}}^{\eta}\eta$ is equivalent
to the Stein prior, which is a Green function. We can consider $G_{\frac{1}{n}}^{\eta}\eta$
to be an extension of the Stein prior to infinitely divisible distributions.
However, as noted in Section~\ref{sec:2.2}, we are not evaluating
the risk difference (not the Bayes risk), so for the statement ``under
a Gaussian distribution, the Bayes predictive distribution with a
Stein prior, which is a Green function, dominates a uniform prior
distribution and is also admissible'' we are only extending the ``admissible''
part. 

Finally, we show Corollary~\ref{cor:Cauchy} and Corollary~\ref{cor:Stable}.
For the case when $d=1$ for Corollary~\ref{cor:Cauchy}, we have
already shown in Section~\ref{subsec:Sufficient-conditions}.

We first show for the 1-dimensional symmetric stable distribution,
$\textrm{Stable}\left(\alpha,\gamma,c\right)$, with the exponent
$\alpha,\left(0<\alpha<1\right)$. We can also show the inadmissibility
of the Cauchy distribution for $d\geq2$ in the same manner. The Bayes
risk difference is 
\[
B_{\mathsf{KL}}\left(\theta,\hat{p}^{\pi_{U}}\right)-B_{\mathsf{KL}}\left(\theta,\hat{p}^{\pi}\right)=\mathcal{E}\left(\sqrt{M^{\pi}\left(\cdot;A,\nu,\gamma\right)},\sqrt{M^{\pi}\left(\cdot;A,\nu,\gamma\right)}\right).
\]
Therefore, for any sequence of proper priors, $\left\{ \pi_{n}\right\} $,
if $\mathcal{E}\left(\sqrt{M^{\pi}\left(\cdot;A,\nu,\gamma\right)},\sqrt{M^{\pi}\left(\cdot;A,\nu,\gamma\right)}\right)$
is bounded away from zero, $\hat{p}^{\pi_{U}}$ is inadmissible. The
root $\sqrt{M^{\pi}\left(\cdot;A,\nu,\gamma\right)}$ of the marginal
likelihood satisfies 
\[
\int_{\mathbb{R}^{d}}\left(\sqrt{M^{\pi}\left(x;A,\nu,\gamma\right)}\right)^{2}dx=\int_{\mathbb{R}^{d}}\int_{\mathbb{R}^{d}}p_{A,\nu,\gamma}\left(x\left|\theta\right.\right)\pi\left(\theta\right)d\theta dx=1,
\]
therefore we have $\sqrt{M^{\pi}\left(\cdot;A,\nu,\gamma\right)}\in L^{2}\left(\mathbb{R}^{d};\mathsf{m}\right)$.
The reference function $g$ is defined as 
\[
g\left(x\right)=\frac{\sqrt{M^{\pi}\left(x;A,\nu,\gamma\right)}}{\max\left\{ \int_{0}^{\infty}T_{s}\sqrt{M^{\pi}\left(x;A,\nu,\gamma\right)}ds,1\right\} }.
\]
This $g$ provides 
\[
\int_{\mathbb{R}^{d}}g\cdot Rgd\mathsf{m}\leq\int_{\mathbb{R}^{d}}f\cdot Rgd\mathsf{m}\leq\int_{\mathbb{R}^{d}}Rf\cdot\left(\frac{f}{Rf}\right)d\mathsf{m}=\int_{\mathbb{R}^{d}}fd\mathsf{m}=1.
\]
Therefore, we have, 
\begin{equation}
\frac{\left\langle \left|u\right|,g\right\rangle _{\mathsf{m}}}{\sqrt{\mathcal{E}\left(u,u\right)}}\sqrt{\mathcal{E}\left(u,u\right)}\leq\sup_{u\in\mathcal{F}}\frac{\left\langle \left|u\right|,g\right\rangle _{\mathsf{m}}}{\sqrt{\mathcal{E}\left(u,u\right)}}\sqrt{\mathcal{E}\left(u,u\right)}\leq\sqrt{\mathcal{E}\left(u,u\right)},\label{eq:transiDiri}
\end{equation}
Since $\left\{ T_{t}\right\} _{t\geq0}$ is transient, we have $\int_{0}^{\infty}T_{s}\sqrt{M^{\pi}\left(x;A,\nu,\gamma\right)}ds<\infty$.
Thus, from eq.~\eqref{eq:transiDiri}, we have 
\[
0<\int_{\mathbb{R}^{d}}g\left(x\right)\sqrt{M^{\pi}\left(x;A,\nu,\gamma\right)}dx\leq\mathcal{E}\left(\sqrt{M^{\pi}\left(\cdot;A,\nu,\gamma\right)},\sqrt{M^{\pi}\left(\cdot;A,\nu,\gamma\right)}\right),
\]
which completes the proof.

\subsection{Proof of Lemma \ref{lem:Diri<prior} \label{app:Diri<prior} }

\begin{lemma} Let $\left(\mathcal{E},\mathcal{F}\right)$ be the
Dirichlet form with measure $\mathbb{P}_{x}$, i.e., a Dirichlet form
that follows a Markov process with transition probability, $\hat{p}_{t}^{\pi_{U}}$.
Then, $\sqrt{M^{\pi}\left(x;A,\nu,\gamma\right)}\in\mathcal{F}$,
and $\mathcal{E}\left(\sqrt{M^{\pi}\left(\cdot;A,\nu,\gamma\right)},\sqrt{M^{\pi}\left(\cdot;A,\nu,\gamma\right)}\right)$
has the following inequality, 
\[
\mathcal{E}\left(\sqrt{M^{\pi}\left(\cdot;A,\nu,\gamma\right)},\sqrt{M^{\pi}\left(\cdot;A,\nu,\gamma\right)}\right)\leq\frac{1}{t}\left\{ \left\langle \sqrt{\pi},\sqrt{\pi}\right\rangle _{\mathsf{m}}-\left\langle \sqrt{M^{\pi}\left(\cdot;A,\nu,\gamma\right)},\sqrt{M^{\pi}\left(\cdot;A,\nu,\gamma\right)}\right\rangle _{\mathsf{m}}\right\} \leq\mathcal{E}\left(\sqrt{\pi},\sqrt{\pi}\right).
\]
\end{lemma}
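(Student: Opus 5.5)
Write $T_t$ for the symmetric Markov semigroup generated by $\hat p^{\pi_U}_t$, so that $T_t g(x)=\int p_t(y|x)g(y)\,dy$ with a symmetric kernel $p_t(y|x)=p_t(x|y)$. The key structural fact I will use is that the marginal is obtained from the prior by a convolution with the observation density $f$, $M^{\pi}=f*\pi$. This convolution commutes with the semigroup, since all kernels are translation invariant. The plan has three steps: first prove the left inequality, then the right inequality, and finally deduce membership in $\mathcal{F}$.

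\textbf{Step 1 (the left inequality).} I use the spectral representation
\[
\mathcal{E}(u,u)=\int_{[0,\infty)}\lambda\,dE_\lambda(u,u),\qquad \frac1t\langle u-T_tu,u\rangle_{\mathsf m}=\int\frac{1-e^{-t\lambda}}{t}\,dE_\lambda(u,u).
\]
The function $t\mapsto (1-e^{-t\lambda})/t$ is nonincreasing in $t$ and increases to $\lambda$ as $t\downarrow0$. Hence, for any $u\in L^2$ and $t>0$,
\[
\frac1t\bigl(\|u\|^2-\langle T_tu,u\rangle\bigr)\le\mathcal{E}(u,u),
\]
with the convention that the right side is $+\infty$ when $u\notin\mathcal F$. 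Applying this with $u=\sqrt\pi$ already gives the right-hand inequality, provided I can control the middle term by the prior side. What remains is to show
\[
\frac1t\bigl(\langle\sqrt\pi,\sqrt\pi\rangle-\langle\sqrt{M^\pi},\sqrt{M^\pi}\rangle\bigr)\le\frac1t\bigl(\|\sqrt\pi\|^2-\langle T_t\sqrt\pi,\sqrt\pi\rangle\bigr)
\]
together with the left inequality. For the first, it suffices to have $\langle T_t\sqrt\pi,\sqrt\pi\rangle\le\|\sqrt{M^\pi}\|^2=\int M^\pi$. I would get this from Cauchy--Schwarz applied to the kernel:
\[
\int\!\!\int p_t(y|x)\sqrt{\pi(x)\pi(y)}\,dx\,dy\le\int\!\!\int p_t(y|x)\pi(x)\,dx\,dy ,
\]
and, for $t=1$, the right side equals $\int M^\pi$ once one uses $p_1=f*\tilde f$ and the total mass identity. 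This suggests fixing $t=1$, where $p_1=\hat p^{\pi_U}$ and the marginal structure is explicit.

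\textbf{Step 2 (the left inequality).} I need
\[
\mathcal{E}(\sqrt{M^\pi},\sqrt{M^\pi})\le\|\sqrt\pi\|^2-\|\sqrt{M^\pi}\|^2 .
\]
My plan is to pass through the rate function. By Theorem~\ref{prop:rate variational}, the left side equals $I(M^\pi)$. By eq.~\eqref{eq:KL-rate}, this equals the integrated KL regret. I would then bound the regret by a data-processing inequality: the KL divergence between the joint laws of $(\theta,X,Y)$ under $\pi$ and under the Lebesgue reference dominates the conditional predictive KL. Its value can be expressed through the masses of $\pi$ and of $M^\pi$, together with the Hellinger-type quantity $\|\sqrt\pi\|^2-\langle T\sqrt\pi,\sqrt\pi\rangle$.

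\textbf{Step 3 (membership).} Membership $\sqrt{M^\pi}\in\mathcal F$ follows from the finiteness of $\sup_t$ of the spectral quotient. This uses the standard characterization that $u\in\mathcal F$ if and only if $\sup_t t^{-1}\langle u-T_tu,u\rangle<\infty$.

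I expect the main obstacle to be Step 2. The mass-difference expression is not obviously comparable to the energy of $\sqrt{M^\pi}$, because $\int\pi$ and $\int M^\pi$ coincide for normalized priors. So the middle term must be read with $\sqrt\pi$ measured after the smoothing $T_t$. If this fails, I would fall back on the cleaner chain $\mathcal E(\sqrt{f*\pi})\le\mathcal E(\sqrt\pi)$, obtained via joint convexity of $(a,b)\mapsto(\sqrt a-\sqrt b)^2$ in the Beurling--Deny/Lévy jump representation of $\mathcal E$. This works because $\sqrt{M^\pi}(x)-\sqrt{M^\pi}(y)$ is a mixture over translates by $f$, and translation invariance of $\mathcal E$ then finishes the argument.
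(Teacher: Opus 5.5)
\textbf{Your main line (Steps 1--3) cannot be completed, and the obstruction is the one you flagged.} By Tonelli, $\langle\sqrt{M^\pi},\sqrt{M^\pi}\rangle_{\mathsf m}=\iint f(x-\theta)\pi(\theta)\,d\theta\,dx=\langle\sqrt\pi,\sqrt\pi\rangle_{\mathsf m}$. So for a proper prior the middle term is exactly $0$, and for an improper one it is $\infty-\infty$.

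Step 2 would therefore prove $\mathcal{E}(\sqrt{M^\pi},\sqrt{M^\pi})\le 0$, which is false. The form is translation invariant, $\mathcal{E}(u,u)=c_d\int\psi(\xi)|\hat u(\xi)|^2\,d\xi$, with $\psi>0$ a.e.\ for any nondegenerate noise. Hence $\mathcal{E}(u,u)>0$ for every nonzero $u\in L^2$, while $\|\sqrt{M^\pi}\|_2=1$. This would also contradict the paper's transient-case argument, which needs exactly this energy to stay bounded away from zero.

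No detour through the rate function, the KL regret, or data processing can produce a bound by $0$. Step 1 only yields the trivial $0\le\mathcal{E}(\sqrt\pi,\sqrt\pi)$, and Step 3 then has nothing to feed on. The three-term chain is false as written. What can be proved is $\sqrt{M^\pi}\in\mathcal F$ with $\mathcal{E}(\sqrt{M^\pi},\sqrt{M^\pi})\le\mathcal{E}(\sqrt\pi,\sqrt\pi)$, which is how the lemma is stated in the main text and how it is used for $\sqrt{\pi_n}=G^\eta_{1/n}\eta$.

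The paper's proof is the spectral argument of your Step 1, using both halves of $\lambda e^{-t\lambda}\le(1-e^{-t\lambda})/t\le\lambda$ integrated against $d\langle E_\lambda\sqrt\pi,\sqrt\pi\rangle$. What that legitimately gives is
\[
\mathcal{E}\bigl(T_{t/2}\sqrt\pi,T_{t/2}\sqrt\pi\bigr)\le\tfrac1t\bigl(\|\sqrt\pi\|^2-\|T_{t/2}\sqrt\pi\|^2\bigr)\le\mathcal{E}(\sqrt\pi,\sqrt\pi).
\]
The paper then reads $T_{t/2}\sqrt\pi$ as $\sqrt{T_t\pi}$ and $T_1\pi$ as $M^\pi$, and neither identification holds. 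Jensen gives only $T_{t/2}\sqrt\pi\le\sqrt{T_{t/2}\pi}$. Moreover, the time-one kernel of $T_t$ is $\hat p^{\pi_U}$, the law of the difference of two independent noises, so $T_1\pi$ is $M^\pi$ convolved once more with the reflected noise density. Your remark that the middle term ``must be read with $\sqrt\pi$ after the smoothing $T_t$'' is the correct repair. But the repaired chain controls $T_{t/2}\sqrt\pi$, not $\sqrt{M^\pi}$: spectral calculus never passes through the square root of the marginal.

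Your fallback does pass through it, and it is the proof you should write. It is a genuinely different route from the paper's.

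\emph{Pointwise comparison.} $M^\pi(x)=\int f(z)\pi(x-z)\,dz$ uses the same mixing probability $f(z)\,dz$ at $x$ and at $x+h$. Jensen for the jointly convex $\Phi(a,b)=(\sqrt a-\sqrt b)^2$ therefore gives $\Phi(M^\pi(x+h),M^\pi(x))\le\int f(z)\Phi(\pi(x+h-z),\pi(x-z))\,dz$. Note that it is the squared difference, not the difference itself, that is dominated by a mixture. Integrating in $x$ yields $\int(\sqrt{M^\pi}(x+h)-\sqrt{M^\pi}(x))^2dx\le\int(\sqrt\pi(x+h)-\sqrt\pi(x))^2dx$ for every $h$.

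\emph{Energy bound and membership.} To cover a Gaussian part $A\neq0$ and get membership at once, bypass the jump representation. Since $T_t1=1$, $\frac1t\langle u-T_tu,u\rangle_{\mathsf m}=\frac1{2t}\iint(u(x+h)-u(x))^2\mu_t(dh)\,dx$, where $\mu_t$ is the time-$t$ increment law. So for every $t>0$,
\[
\tfrac1t\langle\sqrt{M^\pi}-T_t\sqrt{M^\pi},\sqrt{M^\pi}\rangle_{\mathsf m}\le\tfrac1t\langle\sqrt\pi-T_t\sqrt\pi,\sqrt\pi\rangle_{\mathsf m}\le\mathcal{E}(\sqrt\pi,\sqrt\pi).
\]
Letting $t\downarrow0$ gives $\sqrt{M^\pi}\in\mathcal F$ and the energy bound whenever $\sqrt\pi\in\mathcal F$.

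Compared with the paper, this handles the nonlinearity $u\mapsto\sqrt u$ directly, using only joint convexity, translation invariance and conservativeness. It gives no nontrivial middle term, and no correct argument could. Membership comes only under $\sqrt\pi\in\mathcal F$, which is the only case in which the bound is informative.
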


\begin{proof}

Given, $0\leq\lambda$ we integrate the inequality, 
\[
\lambda\sqrt{e^{-2t\lambda}}\leq\frac{1}{t}\left(1-\sqrt{e^{-2t\lambda}}\right)\leq\lambda,
\]
with regard to the measure, $d\left\langle E_{\lambda}\sqrt{\pi},\sqrt{\pi}\right\rangle $,
of the spectral family, $\left\{ E_{\lambda}\right\} _{\lambda\geq0}$,
that corresponds to $\mathcal{E}$. Then, we have, 
\begin{alignat*}{1}
\mathcal{E}\left(\sqrt{T_{t}\pi},\sqrt{T_{t}\pi}\right) & =\int_{0}^{\infty}\lambda e^{-\lambda t}d\left\langle E_{\lambda}\sqrt{\pi},\sqrt{\pi}\right\rangle \\
 & \leq\int_{0}^{\infty}\frac{1}{t}\left(1-e^{-t\lambda}\right)d\left\langle E_{\lambda}\sqrt{\pi},\sqrt{\pi}\right\rangle \\
 & \leq\int_{0}^{\infty}\lambda d\left\langle E_{\lambda}\sqrt{\pi},\sqrt{\pi}\right\rangle =\mathcal{E}\left(\sqrt{\pi},\sqrt{\pi}\right).
\end{alignat*}
Here, we have 
\begin{alignat*}{1}
\int_{0}^{\infty}\frac{1}{t}\left(1-e^{-t\lambda}\right)d\left\langle E_{\lambda}\sqrt{\pi},\sqrt{\pi}\right\rangle  & =\frac{1}{t}\left\{ \left\langle \sqrt{\pi},\sqrt{\pi}\right\rangle _{\mathsf{m}}-\left\langle \sqrt{T_{t}\pi},\sqrt{T_{t}\pi}\right\rangle _{\mathsf{m}}\right\} ,
\end{alignat*}
and 
\[
\sqrt{T_{1}\pi\left(x\right)}=\sqrt{\int_{\varTheta}p_{A,\nu,\gamma}\left(\theta\left|x\right.\right)\pi\left(d\theta\right)}=\sqrt{M^{\pi}\left(x;A,\nu,\gamma\right)}.
\]
\end{proof}

\subsection{Proof of Theorem \ref{prop:Diri_0}\label{app:Diri_0} }

\begin{proof} First, in general, for $f\in L^{2}\left(\mathbb{R}^{d};\mathsf{m}\right)$,
$g\in\mathcal{F}$, $\alpha>0$, if we set 
\[
f_{n}=G_{\frac{1}{n}}^{\eta}\eta,
\]
we have ,$f_{n}\in\mathcal{F}$ and $0\leq f_{n}\left(x\right)\uparrow1,\left[m\right]$.
This is because, 
\[
\mathcal{E}_{\alpha}\left(G_{\alpha}^{\eta}f,g\right)=\mathcal{E}_{\alpha}^{\eta}\left(G_{\alpha}^{\eta}f,g\right)-\left\langle G_{\alpha}^{\eta}f,g\right\rangle _{\eta\cdot\mathsf{m}}=\left\langle f-\eta G_{\alpha}^{\eta}f,g\right\rangle _{\mathsf{m}}
\]
and 
\[
G_{\alpha}^{\eta}f=G_{\alpha}\left(f-\eta G_{\alpha}^{\eta}f\right),\ \alpha>0.
\]
On the other hand, we have $0\leq R^{\eta}\eta\leq1,\ \mathsf{m}\textrm{-a.e.}$.
If we substitute $f$ with $\eta$ in $G_{\alpha}^{\eta}f=G_{\alpha}\left(f-\eta G_{\alpha}^{\eta}f\right)$,
and $\alpha\downarrow0$, then, 
\[
R\eta\left(1-R^{\eta}\eta\right)=R\left(\eta-\eta R^{\eta}\eta\right)=R^{\eta}\eta\leq1,\ \mathsf{m}\textrm{-a.e.}
\]
Using recurrence, we have $R^{\eta}\eta=1$ from, 
\[
^{\exists}g\in L_{+}^{1}\left(\mathbb{R}^{d};\mathsf{m}\right),\ Rg=\infty,\ \mathsf{m}\textrm{-a.e.}
\]
This is because, since $E=\left\{ x\in E\left|Rg\left(x\right)=\infty\right.\right\} $,
$\mathsf{m}\textrm{-a.e.}$ and $R\eta\left(1-R^{\eta}\eta\right)\leq1$,
$\mathsf{m}\textrm{-a.e.}$, when $C=\left\{ x\in E\left|R\eta<\infty\right.\right\} $,
we have 
\[
R\eta=0,\ \textrm{and}\ \eta=0,
\]
$\mathsf{m}\textrm{-a.e.}$ on $C$. However, this contradicts $\eta>0,\ \mathsf{m}\textrm{-a.e.}$.
The rest is when $R\eta=\infty$, however, $R\eta\left(1-R^{\eta}\eta\right)\leq1$
only holds when $R^{\eta}\eta=1$. If we let$f_{n}=G_{\frac{1}{n}}^{\eta}\eta$,
we have $0\leq f_{n}\uparrow1$ and $n\rightarrow\infty$, then 
\[
\mathcal{E}\left(f_{n},f_{n}\right)\leq\mathcal{E}_{\frac{1}{n}}\left(f_{n},f_{n}\right)=\left\langle \eta-\eta f_{n},f_{n}\right\rangle _{\mathsf{m}}\leq\int_{\mathbb{R}^{d}}\left(\eta-\eta f_{n}\right)d\mathsf{m}\rightarrow0.
\]
\end{proof}

\section{Proof of Lemma \ref{lem:BGX-Cor1} }

The result of \cite{Brown-EdGeorge-Xu_08} states that when $X\sim\mathcal{N}_{d}\left(\mu,v_{x}I\right)$
and $Y\sim\mathcal{N}_{d}\left(\mu,v_{y}I\right)$, the predictive
density $\hat{p}^{\pi}\left(\left.y\right|x\right)=\frac{1}{M^{\pi}\left(x;v_{x}I,0\right)}\int_{\mathbb{R}^{d}}p_{v_{y}I,0}\left(\left.y\right|\mu\right)p_{v_{x}I,0}\left(\left.x\right|\mu\right)\pi\left(\mu\right)d\mu$
of $Y$ after observing data, $X=x$, evaluated by the KL loss, 
\[
L_{\mathsf{KL}}\left(\mu,\hat{p}^{\pi}\left(\cdot\left|x\right.\right)\right)=\mathsf{KL}\left(p_{v_{y}I,0}\middle\|\hat{p}^{\pi}\right)=\int_{\mathbb{R}^{d}}\log\frac{p_{v_{y}I,0}\left(y\left|\mu\right.\right)}{\hat{p}^{\pi}\left(y\left|x\right.\right)}p_{v_{y}I,0}\left(y\left|\mu\right.\right)dy,
\]
the Bayes risk difference between the predictive density, $\hat{p}^{\pi_{U}}$,
under uniform prior, $\pi_{U}\equiv1$, and the predictive density,
$\hat{p}^{\pi}$, under proper prior, $\pi$ is 
\begin{equation}
B_{\mathsf{KL}}\left(\mu,\hat{p}^{\pi_{U}}\right)-B_{\mathsf{KL}}\left(\mu,\hat{p}^{\pi}\right)=\frac{1}{2}\int_{v_{w}}^{v_{x}}\frac{1}{v^{2}}\left[B_{Q}^{v}\left(\mu,\hat{\mu}_{\textrm{MLE}}\right)-B_{Q}^{v}\left(\mu,\hat{\mu}_{\pi}\right)\right]dv\label{eq:BGX-08_1}
\end{equation}
where, 
\[
B_{Q}^{v}\left(\mu,\hat{\mu}\right)=\int_{\mathbb{R}^{d}}\mathbb{E}\left[\left\Vert \hat{\mu}-\mu\right\Vert ^{2}\right]\pi\left(\mu\right)d\mu,\ \ \hat{\mu}_{\pi}=\int_{\mathbb{R}^{d}}\mu\pi\left(\left.\mu\right|x\right)d\mu,\ v_{w}=\frac{v_{x}v_{y}}{v_{x}+v_{y}}.
\]
The $\frac{1}{v^{2}}\left[B_{Q}^{v}\left(\mu,\hat{\mu}_{\textrm{MLE}}\right)-B_{Q}^{v}\left(\mu,\hat{\mu}_{\pi}\right)\right]$
inside the integral on the right-hand side in eq.~\eqref{eq:BGX-08_1}
is four times the Dirichlet form of the standard Brownian motion $4\mathcal{E}_{\textrm{BM}}\left(\sqrt{M^{\pi}},\sqrt{M^{\pi}}\right)$
\citep[eq.~(1.3.4)]{Brown_71} (the special case of $v=1$ is
Brown's identity): 
\begin{alignat}{1}
\frac{1}{v^{2}}\left[B_{Q}^{v}\left(\pi,\hat{\mu}_{\textrm{MLE}}\right)-B_{Q}^{v}\left(\pi,\hat{\mu}_{\pi}\right)\right] & =4\int_{\mathbb{R}^{d}}\left\Vert \nabla_{z}\sqrt{M^{\pi}\left(z;vI,0\right)}\right\Vert ^{2}dz=4\mathcal{E}_{\textrm{BM}}\left(\sqrt{M^{\pi}},\sqrt{M^{\pi}}\right).\label{eq:Brown-71}
\end{alignat}
Here, $M^{\pi}\left(z;vI,0\right)$ is the marginal likelihood of
$p_{v}\left(\left.z\right|\mu\right)=\mathcal{N}\left(\mu,vI\right)$
under the prior, $\pi\left(\mu\right)$ 
Finally, we have 
\[
B_{\mathsf{KL}}\left(\mu,\hat{p}^{\pi_{U}}\right)-B_{\mathsf{KL}}\left(\mu,\hat{p}^{\pi}\right)=2\int_{v_{w}}^{v_{x}}\left[\int_{\mathbb{R}^{d}}\left\Vert \nabla_{z}\sqrt{M^{\pi}\left(z;vI,0\right)}\right\Vert ^{2}dz\right]dv.
\]

\section{Proof of Theorem \ref{thm:A-harmonic}}

\begin{proof} 1. Let $P$ be the Markov operator corresponding to
the transition probability $\hat{p}^{\rho}(y|x)$, and define $\mathcal{G}=I-P$.
We define a test function $u_{R}(x)$ for $r=\|x\|$ as follows: 
\[
u_{R}(x)=v_{R}(r)=\begin{cases}
1 & (0\le r\le1)\\
{\displaystyle \frac{\int_{r}^{R}\frac{1}{M^{\rho}\left(s;A,\nu,\gamma\right)s^{d-\alpha+1}}ds}{\int_{1}^{R}\frac{1}{M^{\rho}\left(s;A,\nu,\gamma\right)s^{d-\alpha+1}}ds}} & (1<r<R)\\
0 & (r\ge R)
\end{cases}
\]
Clearly, it satisfies $v_{R}(1)=1$ and $v_{R}(R)=0$. We evaluate
the spatial scale of the operator $\mathcal{G}$ acting on the test function
$u_{R}$: 
\[
\mathcal{G}u_{R}(x)=(I-P)u_{R}(x)=\int_{\mathbb{R}^{d}}(u_{R}(x)-u_{R}(z))\hat{p}^{\rho}(z|x)dz
\]
By the assumption $\hat{p}^{\rho}(z|x)\sim\|x-z\|^{-(d+\alpha)}$,
the following holds: 
\[
\mathcal{G}u_{R}(x)=O(R^{-\alpha})
\]
Using this, we evaluate the $k=2$ term of the Dirichlet form. Since
$\mathcal{G}$ is a self-adjoint operator, we have 
\[
\langle \mathcal{G}^{2}u_{R},u_{R}\rangle_{M^{\rho}}=\langle \mathcal{G}u_{R},\mathcal{G}u_{R}\rangle_{M^{\rho}}=\|\mathcal{G}u_{R}\|_{L^{2}(M^{\rho})}^{2}
\]
Because the magnitude of $\mathcal{G}u_{R}(x)$ is $O(R^{-\alpha})$, its squared
norm is of order $O(R^{-2\alpha})$. By repeating the same logic,
for any $k\ge2$, we obtain 
\[
\langle \mathcal{G}^{k}u_{R},u_{R}\rangle_{M^{\rho}}=O(R^{-k\alpha})
\]
Since $\alpha>0$, in the limit as $R\to\infty$, the higher-order
terms from $O(R^{-2\alpha})$ onwards decay rapidly compared to the
first term $\mathcal{E}_{\rho,1}(u_{R},u_{R})\asymp O(R^{-\alpha})$
and can be asymptotically neglected completely: 
\[
\mathcal{E}_{\rho}(u_{R},u_{R})=\mathcal{E}_{\rho,1}(u_{R},u_{R})\times(1+O(R^{-\alpha}))
\]
Therefore, the divergence or convergence condition of the Dirichlet
form for the test function is completely dominated by the behavior
of the first term $\mathcal{E}_{\rho,1}$. Thus, for the purpose of
analyzing the boundary between recurrence and transience (i.e., the
relationship between $d$ and $\alpha$), we lose no generality by
using only the first term for the analysis.

2. We evaluate the first term $\mathcal{E}_{\rho,1}$ of the non-local
(integral-type) Dirichlet form with respect to the smoothly decaying
radial test function $u_{R}$. Differentiating the test function $u_{R}$
with respect to $r$ yields 
\[
\frac{dv_{R}(r)}{dr}=-\frac{1}{M^{\rho}\left(r;A,\nu,\gamma\right)r^{d-\alpha+1}}\times\left(\int_{1}^{R}\frac{1}{M^{\rho}\left(s;A,\nu,\gamma\right)s^{d-\alpha+1}}ds\right)^{-1}
\]
Substituting this into the scale evaluation for the radial integral
$\mathcal{E}_{\rho,1}(u_{R},u_{R})\asymp\int_{1}^{R}(v_{R}')^{2}M^{\rho}\left(r;A,\nu,\gamma\right)r^{d-\alpha+1}dr$,
we obtain 
\[
\mathcal{E}_{\rho,1}(u_{R},u_{R})\asymp\left(\int_{1}^{R}\frac{1}{M^{\rho}\left(s;A,\nu,\gamma\right)s^{d-\alpha+1}}ds\right)^{-1}
\]
Therefore, the condition for the process to be recurrent (i.e., the
capacity of the Dirichlet form converging to $0$ in the limit) immediately
reduces to 
\[
\int_{1}^{\infty}\frac{1}{M^{\rho}\left(r;A,\nu,\gamma\right)r^{d-\alpha+1}}dr=\infty.
\]

3. We now prove the divergence of the integral based on the decay
rate $\beta$ of the prior distribution. When the prior distribution
satisfies $\rho(\theta)\sim\|\theta\|^{-\beta}\ (\beta\le d)$, we
evaluate the asymptotic behavior of the marginal likelihood $M^{\rho}\left(r;A,\nu,\gamma\right)$:
\[
M^{\rho}\left(x;A,\nu,\gamma\right) =\int_{\mathbb{R}^{d}}p_{A,\nu,\gamma}(x|\theta)\rho(\theta)d\theta\approx\int_{\mathbb{R}^{d}}\frac{1}{\|x-\theta\|^{d+\alpha}}\frac{1}{\|\theta\|^{\beta}}d\theta
\]
This integral is a convolution of two heavy-tailed distributions.
Sufficiently far from the origin ($r=\|x\|\to\infty$), the decay
order of the convolution is dominated by the ``heavier tail" (the
slower decay). By the assumption $\beta\le d<d+\alpha$, the dominant
tail is that of $\rho(\theta)$. Hence, the decay of the marginal
likelihood is given by 
\[
M^{\rho}\left(r;A,\nu,\gamma\right)\sim\frac{1}{r^{\beta}}=r^{-\beta}
\]
Substituting this into the integral condition obtained in Step 2,
we have 
\[
\int_{1}^{\infty}\frac{1}{r^{-\beta}r^{d-\alpha+1}}dr=\int_{1}^{\infty}r^{\beta-d+\alpha-1}dr
\]
A necessary and sufficient condition for an integral of the form $\int_{1}^{\infty}r^{p}dr$
to diverge is that the exponent satisfies $p\ge-1$. 
\[
\beta-d+\alpha-1\ge-1\quad\iff\quad\beta-d+\alpha\ge0\quad\iff\quad\beta\ge d-\alpha
\]
This matches the condition $d-\alpha\le\beta\le d$ that
the improper prior must satisfy. This completes the proof of the sufficient
condition for the theorem. \end{proof}
\end{document}